\newtheorem{theorem}{Theorem}
\newtheorem{lemma}{Lemma}
\newtheorem{proposition}{Proposition}
\newtheorem{corollary}{Corollary}
\newtheorem{claim}{Claim}
 \newtheorem{assumption}{Assumption}
 \theoremstyle{definition}
 \theoremstyle{remark}
 \numberwithin{equation}{section}
\newcommand{\vertiii}[1]{{\left\vert\kern-0.25ex\left\vert\kern-0.25ex\left\vert #1
    \right\vert\kern-0.25ex\right\vert\kern-0.25ex\right\vert}}
\newcommand{\R}{{\mathbb R}}
\newcommand{\f}[2]{\frac{#1}{#2}}
\newcommand{\cl}{{\mathcal L}}
\newcommand{\al}{\alpha}
\newcommand{\be}{\beta}
\newcommand{\ga}{\gamma}
\newcommand{\Ga}{\Gamma}
\newcommand{\de}{\delta}
\newcommand{\De}{\Delta}
\newcommand{\ka}{\kappa}
\newcommand{\la}{\lambda}
\newcommand{\si}{\sigma}
\newcommand{\rn}{{\mathbb R}^n}
\newcommand{\rne}{{\mathbb R}^{n-1}}
\newcommand{\rmm}{{\mathbb R}^m}
\newcommand{\rone}{\mathbb R}
\newcommand{\rtwo}{\mathbf R^2}
\newcommand{\dpr}[2]{\langle #1,#2 \rangle}
\newcommand{\eps}{\epsilon}
\newcommand{\cq}{\mathcal Q}
\newcommand{\cp}{\mathcal P}
\newcommand{\p}{\partial}
\newcommand{\beq}{\begin{equation}}
\newcommand{\eeq}{\end{equation}}
\newcommand{\beqna}{\begin{eqnarray*}}
\newcommand{\eeqna}{\end{eqnarray*}}
\newcommand{\beqn}{\begin{equation*}}
\newcommand{\eeqn}{\end{equation*}}
\newcommand{\bp}{\begin{proof}}
\newcommand{\ep}{\end{proof}}
\newcommand{\bprop}{\begin{proposition}}
\newcommand{\eprop}{\end{proposition}}
\newcommand{\bt}{\begin{theorem}}
\newcommand{\et}{\end{theorem}}
\newcommand{\bex}{\begin{Example}}
\newcommand{\eex}{\end{Example}}
\newcommand{\bc}{\begin{corollary}}
\newcommand{\ec}{\end{corollary}}
\newcommand{\bcl}{\begin{claim}}
\newcommand{\ecl}{\end{claim}}
\newcommand{\bl}{\begin{lemma}}
\newcommand{\el}{\end{lemma}}
\begin{document}

\title[Sharp relaxation rates  for plane waves]
 {Sharp relaxation rates for  plane waves of  reaction-diffusion systems}

\thanks{Stefanov's research  is partially  supported by  NSF-DMS   under grant  \# 1614734.}

 \author[Fazel Hadadifrad]{\sc Fazel Hadadifard}
 \address{ Department of Mathematics,
 	University of Kansas,
 	1460 Jayhawk Boulevard,  Lawrence KS 66045--7523, USA}
 \email{f.hadadi@ku.edu}
 
 \author[Atanas Stefanov]{\sc Atanas G. Stefanov}
 \address{ Department of Mathematics,
 	University of Kansas,
 	1460 Jayhawk Boulevard,  Lawrence KS 66045--7523, USA}
 \email{stefanov@ku.edu}

\subjclass[2000]{Primary 35B40,  35K57, Secondary 35B60, 35C07 }

\keywords{asymptotic stability,  plane waves,  reaction-diffusion systems, relaxation rates}

\date{\today}

\begin{abstract}
 It is well-known and classical result that  spectrally  stable traveling waves of a general reaction-diffusion system in one spatial dimension are asymptotically stable with exponential relaxation rates. In a series of works in the 1990's, \cite{Good, TK, LX, Xin}, the authors have considered plane traveling waves for such systems and they have succeeded in showing  asymptotic stability for such objects. Interestingly, the   (estimates for the) relaxation rates that they have exhibited, are all algebraic and dimension dependent. It was heuristically argued that as the spectral gap closes in dimensions $n\geq 2$, algebraic rates are the best possible. 
 
 In this paper, we revisit this issue. We rigorously calculate the  sharp relaxation rates in $L^\infty$ based  spaces, both for the asymptotic phase and the radiation terms.  These turn out to be   are indeed algebraic, but about twice better than the best ones obtained in these early works,  although this can be mostly  attributed  to the inefficiencies of using  Sobolev embeddings to control $L^\infty$ norms by high order $L^2$ based Sobolev space norms.  Finally, we explicitly construct the leading order profiles, both for the phase and the radiation terms. Our approach relies on the method of scaling variables, as introduced in  \cite{GW, GW1} and in fact provides sharp relaxation rates in a class of weighted $L^2$ spaces as well.

\end{abstract}

\maketitle

\section{Introduction}
 In this paper, we study   the following general reaction-diffusion models   
 \begin{eqnarray}
 \label{RD}
 \left\{ 
 \begin{array}{l}
 u_t= \De u+ f(u),~~~~~~ x \in \rn \\ 
 u(0)= u_0,
 \end{array}  
 \right.
 \end{eqnarray}
   where, $n \geq 2$,  $u:  \rn \times \mathbb{R}^+ \rightarrow \rmm$, $m\geq 1$,  and 
   $f\in C^4(\rn, \rmm)$.  More precisely, we will be interested in the dynamics of the solutions with initial data close to   plane waves, that is the dynamics near plane waves. Existence and stability of such waves in the case $n=1$ is a classical subject, with a vast literature associated to it. 
   
   In order to introduce the problem and some notations,  assume that there exist steady states $\phi_{\pm}\in \rmm$,   so that $f(\phi_{\pm})=0$. Next, we assume that $n=1$ and there exists solutions of \eqref{RD}, in the form $u(t,x)=\phi(x-ct)$.  That is, $\phi$ satisfies the 
   one-dimensional profile equation, 
   \begin{equation}
   \label{10} 
   \phi''(z)+c\phi'(z)+f(\phi(z))=0, z\in \rone.
   \end{equation}
   We also assume that $\lim_{z\to \pm \infty} \phi(z)=\phi_\pm$, with exponential rates of convergence, although the exponential rate of convergence can be replaced with a weaker, but nevertheless strong enough algebraic rate. In any case, our standing assumption is 
    that  for some $\upsilon>0$, there is 
   $$
   |\phi(z)-\phi_-|\leq C e^{\upsilon z}, z<0; \ \ |\phi(z)-\phi_+|\leq C e^{-\upsilon z}, z>0
   $$
   Finally, we assume that the localized function $\phi': \phi'\in H^2(\rone)$. Another relevant object  for  the stability theory is the  (one-dimensional)  linearized operator about the wave, namely 
   $$
   L_1= \partial_{zz}+  c \partial_z+ D f(\phi), \ \ D(L_1)=H^2(\rone). 
   $$
    Saying that $\phi$ is spectrally stable amounts to $\si(L_1)\subset {\mathbb C}_-=\{\la: \Re\la\leq 0\}$. Very often, waves like that enjoy the strong spectral stability property, namely that\footnote{Here observe that $0$ is automatically in the spectrum as corresponding to a translational invariance or just by virtue  of  taking $\p_z$ in the profile equation \eqref{10}. } $\si(L_1)\subset \{0\}\cup \{\la: \Re\la\leq -\de\}$ for some $\de>0$. 
    It is a classical result by now that for the $n=1$ problem $u_t=u_{xx}+f(u)$ such solutions are asymptotically stable, \cite{Henry, Sat}, and in fact they enjoy exponential relaxation rates. 
    
    The situation becomes more interesting for the case of plane waves. We now introduce the notion of  plane wave solutions.    These are in the form  $u(t,x)=\phi(\ka\cdot x-ct)$, where $\ka\in \mathbb{S}^{n-1}$. It is clear that $\phi$ satisfies the same     one-dimensional profile equation, \eqref{10}. 
    In fact, without loss of generality, we may assume that $\ka=(1, 0, \ldots, 0)$ as the problem is rotationally invariant. These solutions $\phi$, if they exist, are referred to as {\it plane waves}. Since all  statements we make for traveling plane waves in the form $\phi(x_1-ct, x_2, \ldots, x_n)$ will be easily translatable for general plane  waves of the form $\phi(\ka\cdot x-ct)$ for arbitrary $\ka\in \mathbb{S}^{n-1}$, we henceforth concentrate on the case of waves in the form $\phi(z-ct, x_2, \ldots, x_n)$.  Passing to the moving frame of reference $x_1-c t\to z$ renders the equation \eqref{RD} in the form 
    \begin{equation}
    \label{12} 
    u_t=\De u+ c \p_{z} u +f(u), x \in \rn, 
    \end{equation}  
    To reiterate, going forward, we consider stationary solutions of \eqref{12}, instead of traveling waves for \eqref{RD}. This is, as discussed above, an equivalent problem. 
    
    The study of the plane waves and their stability has attracted a lot of interest over the last thirty years.  The following,  very incomplete,   list   \cite{BJ, BKSS, GLS, GL, TK, TK2, LX, LW, LWang, Sat, TZKS, Xin}, consists of mostly recent references as well as various applications to the sciences. 
    
    We have already mentioned about  asymptotic stability for these waves, so it is time for some rigorous introductions.     More specifically, asymptotic stability in this context  means  that for any initial  data $u_0$, close to the plane wave $\phi$ in an appropriate norm, there is an asymptotic phase $\si(t, y),  x=(z, y)$,  so that  the  radiation term tends to zero, i.e. 
  \begin{equation}
  \label{40}
   \lim_{t\to \infty} \|u(t, z,y)-\phi(z - \si(t,y))\|_{X}=0,  
  \end{equation}
  for  some appropriate function space $X$ in the variables $(z,y)\in \rone\times \rne$. 
  It should be mentioned that the introduction of a  $(t,y)$ dependent asymptotic phase $\si$ is absolutely necessary in order for an estimate like \eqref{40} to hold true. See for example Remark $1.3$ in \cite{TK}. 
  
 Regarding specific results about asymptotic stability of plane waves, let  us begin by   stating that the general  question has been resolved,  for the generality that we are interested in,  in a very satisfactory fashion,  in the works \cite{Good, TK, LX,  Xin}.  Subsequently, and in a more general context in \cite{GLS, GL,  LW, LWang, TZKS},  For some of these later results, the authors consider degenerate systems appearing in certain combustion and biological applications, where the spectral gap property fails even in one spatial dimension.  These works necessitates the introduction of exponentially weighted spaces to effectively create such spectral gap, but this will be outside the scope of this paper. We shall instead concentrate on the easier and yet not very well-understood case, where we start with a spectral gap in one spatial dimension, i.e. the setup in \cite{TK, LX,  Xin}. 
 
   In order to summarize the state of the art, the results in these papers establish that as soon as $n\geq 2$, there is an  {\it algebraic in time estimate} for the relaxation rates in various Sobolev norms.  This is indeed in sharp contrast with the case of one spatial dimension, where under the same spectral assumptions (see the discussion below Assumption \ref{assumption}), one can show, see \cite{Henry, Sat},  that both the radiation and the phase go to zero at an exponential rate. 
   \subsection{Linearized operators} 
Let us introduce the full   linearized operator for the plane wave that arises.  Namely
   \begin{eqnarray*}
   &&L= \De+ c \partial_z+ D f(\phi)=L_1+\De_y, D(L)=H^2(\rn).
   \end{eqnarray*}
  Clearly,  $L$ is a  closed operator.   Due to our assumptions, $\phi$ is a bounded function, whence $L$ is a (non self-adjoint)  Schr\"odinger operator with a drift term. It is a classical fact that for the related one dimensional operator, we have  $L_1[\phi']=0$, which is obtained by differentiating the profile equation \eqref{10} in $z$. This is of course nothing but a manifestation of the fact that the problem is translationally invariant and hence zero is an eigenvalue. 
   As we have alluded to above, the spectral stability of the wave $\phi$, as a  solution to the one dimensional model \eqref{12}, consist in the fact that $\si(L_1)\subset \{z: \Re z\leq 0\}$. Moreover, we shall need to require that in fact its spectrum is a fixed distance $\de>0$ away from the marginal axes $\Re z=0$, except for the translational eigenvalue at zero, which we assume to be simple. More specifically, we make the following standing assumption henceforth. 
   \begin{assumption}
   	\label{assumption} We assume that there exists $\de>0$, so that the spectrum of $L_1$ in $H^1(\rone)$ satisfies 
   	\begin{equation}
   	\label{20} 
   	\sigma(L_1)	\setminus \{0\} \subset \{\la: \Re \la \leq - \de\}
   	\end{equation}
   	Moreover, the eigenvalue at zero is simple, with an eigenfunction $\phi'$. 
   \end{assumption}
   Having the spectral gap condition \eqref{20}, and under appropriate conditions on $f, \phi$, allows one  to  show  that the wave $\phi$ is asymptotically stable, with exponential decay of the radiation term, with an exponential rate of essentially $e^{-(\de-\eps) t}$.   This goes back to at least the classical works \cite{BJ, Henry}. In the case of plane waves, one has $L$ instead of $L_1$ as a linearized operator, which destroys the spectral gap property. In fact,  since $L=L_1+\De_y$,  a direct computation shows that 
   $$
   L[\phi'(z) e^{i k \cdot y}]=-k^2 \phi'(z) e^{i k \cdot y},
   $$
   whence it becomes immediately clear that  the continuous spectrum of $L$ contains the whole negative real axes. In particular, it touches the imaginary axes at zero, so that the corresponding semigroup $e^{t L}$ has at best polynomial rate of decay. Heuristically, one expects no better from the nonlinear problem, so polynomial in time bounds seem  indeed the best possible in \eqref{40}. 
   
   {\it This is however an open problem, and one of the goals of this paper is to establish this rigorously.     In fact, we aim at establishing  the optimal decay rates in these asymptotic results}. We achieve that by requiring  slightly more localized initial  perturbations $v_0:=u_0-\phi$, namely that $v_0$ resides in an appropriate (power) weighted $L^2$ space, see Section \ref{sec:1.2} below.  Before we state our concrete results, let us discuss the setup of the asymptotic stability result. This part follows the work of  Kapitula, \cite{TK}, but note that we introduce weighted spaces for the purposes of our analysis later on. 
   \subsection{Setup  of the asymptotic profile  equations} 
   \label{sec:1.2}
 We start with the Riesz projection for $L_1$, associated with the isolated and simple eigenvalue at zero. Namely, for a small $\eps$,  introduce 
 \begin{equation}
 \label{50} 
 P_0 u= \f{1}{2 \pi i} \int_{|\la|=\eps} (\la- L_1)^{-1} d\ \la
 \end{equation} 	
  As zero is a simple eigenvalue, with an eigenfunction $\phi'$, it follows by the Riesz representation theorem\footnote{In this work, we only use real-valued functions, so the dot product is symmetric $\dpr{\psi}{u}=\dpr{u}{\psi}$} that for $u\in L^2(\rone)$, $P_0 u=\dpr{\psi}{u} \phi'$, where $\psi\in H^2(\rone)$ and in fact $L^*\psi=0$, with the normalization, $\dpr{\psi}{\phi'}=1$, see \cite{TK2}. In addition, we define $Q_0=Id-P_0$, and both operators commute with $L_1$. While the operators $P_0, Q_0$ act upon functions of the first variable only, we may also consider their  action on functions, which depend on the remaining variables $t,y$ as well. 
  
  Introduce   the weighted $L^2(m)(\rne)$ spaces, or $L^2(m)$ for short, as follows 
  $$
  L^2(m)=\left\{f: \rne\to\rone:  \|f\|_{L^2(m)}=\left(\int_{\rne} (1+|y|^2)^m |f(y)|^2dy \right)^{1/2}<\infty   \right\}. 
  $$
  for some $m\in \rone $.  Also, define 
  $$
  H^1(m) =\{f: \rne\to\rone: f, \nabla_y f\in L^2(m)  \}. 
  $$
  Note that all the spaces in this section are based on functions on  $\rne$, due to the fact that $y\in\rne$. In anticipation of our analysis later, we introduce the spaces $(H^1(m)\cap W^{1, \infty})_y H^1_z$ for functions $f(y,z)$, where the norm is taken as follows 
  $$
  \|f\|_{(H^1(m)\cap W^{1, \infty})_y H^1_z}^2 =\sum_{a,b\in \{0,1\}} [\int_{\rn } |\nabla_z^a \nabla^b_y f(y,z)|^2 (1+|y|^2)^m dy dz + \sup_{y\in \rne}  \|\nabla^a_z \nabla^b_y f(z,y)\|_{L^2_z}^2 ] 
  $$
  As is clear from the definition above,  we shall adopt the notion that all  norms in the $z$ variable shall be always  taken first. 
  Introduce 
  the complementary subspaces 
  \begin{eqnarray*}
  &&\mathcal{N}= \{u \in (H^1(m)\cap W^{1, \infty})_y H^1_{z}  : u= P_0u \}\\  
  &&\mathcal{R}= \{u \in (H^1(m)\cap W^{1, \infty})_y H^1_{z}: u= Q_0u \}.
  \end{eqnarray*}
  Clearly $  (H^1(m)\cap W^{1, \infty})_y H^1_{z}   = \mathcal{N}+\mathcal{R}$, in the sense that every function in the base space\footnote{Here, we would like to note that our base space  is a bit different than the one used by the previous authors, who preferred to use high order Sobolev spaces, which control $L^\infty(\rn)$.} \\ $  (H^1(m)\cap W^{1, \infty})_y H^1_{z}$  is uniquely representable as a sum of two functions in $\mathcal{N}$ and $\mathcal{R}$ respectively.  We need the following lemma\footnote{see Lemma 2.2 in \cite{TK} for a similar statement, in high order Sobolev spaces.}
  \begin{lemma}
  	\label{le:10} 
  There exists   $\eps_0>0$ and a constant $C$, so that for all  
  $w:  \|w\|_{(H^1(m)\cap W^{1, \infty})_y H^1_{z}}<\eps_0$, one can find unique and small $(v(w), \si(w))\in \mathcal{R}\times H^1(m) \cap W^{1, \infty}$, so that 
  $$
  \|v(w)\|_{(H^1(m)\cap W^{1, \infty})_y H^1_{z}}+\|\si(w)\|_{H^1(m) \cap W^{1, \infty} }<C \eps_0
  $$ 
  and 
  \begin{equation}
  \label{70}
   \phi(z) + w(z, y) = \phi(z- \sigma(y))+ v(z, y).
  \end{equation}
  \end{lemma}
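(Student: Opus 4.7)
The plan is to apply the implicit function theorem pointwise in $y$ to recover $\sigma(y)$, and then upgrade the pointwise statement to a statement about norms using the explicit formula for $\sigma$ and its $y$-derivatives.

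First, rewrite the equation $\phi(z)+w(z,y)=\phi(z-\sigma(y))+v(z,y)$ with the constraint $v\in \mathcal{R}$ (i.e.\ $P_0 v(\cdot, y)=0$ for each $y$) as the scalar equation
\begin{equation*}
G(\sigma, y) := \langle \psi, \phi - \phi(\cdot - \sigma) \rangle_z + \langle \psi, w(\cdot, y) \rangle_z = 0,
\end{equation*}
where $\psi$ is the normalized left eigenfunction of $L_1$ satisfying $\langle \psi, \phi'\rangle=1$. Observe that $G(0,y)=\langle \psi, w(\cdot,y)\rangle_z$ is small uniformly in $y$ by Cauchy--Schwarz in $z$, while $\partial_\sigma G(\sigma,y)=\langle \psi, \phi'(\cdot-\sigma)\rangle_z$ satisfies $\partial_\sigma G(0,y)=1$ and stays in $[1/2, 3/2]$ for $|\sigma|$ smaller than some fixed constant $\delta_0$ depending only on $\phi$ and $\psi$. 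A standard one-dimensional contraction mapping argument (or the scalar implicit function theorem) thus yields, for every $y\in \rne$ with $|\langle \psi, w(\cdot,y)\rangle_z|$ sufficiently small, a unique small $\sigma(y)$ solving $G(\sigma(y),y)=0$, together with the pointwise estimate
\begin{equation*}
|\sigma(y)| \le 2 |\langle \psi, w(\cdot,y)\rangle_z| \le 2\|\psi\|_{L^2_z} \|w(\cdot,y)\|_{L^2_z}.
\end{equation*}

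Next, the regularity of $\sigma$ in $y$ is obtained by differentiating the identity $G(\sigma(y),y)\equiv 0$, which gives the explicit formula
\begin{equation*}
\partial_{y_i}\sigma(y) = -\frac{\langle \psi, \partial_{y_i} w(\cdot, y)\rangle_z}{\langle \psi, \phi'(\cdot - \sigma(y))\rangle_z}.
\end{equation*}
Since the denominator is bounded below by $1/2$, this yields $|\partial_{y_i}\sigma(y)| \le 2\|\psi\|_{L^2_z}\|\partial_{y_i} w(\cdot,y)\|_{L^2_z}$. Multiplying by $(1+|y|^2)^{m/2}$ and integrating in $y$ gives $\|\sigma\|_{L^2(m)}+\|\nabla_y\sigma\|_{L^2(m)} \lesssim \|w\|_{L^2(m)_y L^2_z}+\|\nabla_y w\|_{L^2(m)_y L^2_z}$; taking the supremum in $y$ instead yields the matching $W^{1,\infty}$ bound. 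Thus $\sigma\in H^1(m)\cap W^{1,\infty}$ with norm controlled by $\|w\|_{(H^1(m)\cap W^{1,\infty})_y H^1_z}$.

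Finally, define $v(z,y):= w(z,y)+\phi(z)-\phi(z-\sigma(y))$. Writing $\phi(z)-\phi(z-\sigma(y))=\int_0^{\sigma(y)}\phi'(z-s)\,ds$ and using $\phi'\in H^2(\rone)$, one controls all required $L^2_z$ and $H^1_z$ norms of this difference by $|\sigma(y)|\cdot\|\phi'\|_{H^1_z}$, and analogous bounds hold for the $y$-derivatives via the chain rule (which involves $\partial_{y_i}\sigma$ times $\phi''(z-\sigma(y))$). Combining these with the control on $\sigma$ established above and with the assumed bound on $w$ gives $\|v\|_{(H^1(m)\cap W^{1,\infty})_y H^1_z}\lesssim \|w\|_{(H^1(m)\cap W^{1,\infty})_y H^1_z}$. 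Uniqueness of $(v,\sigma)$ in the claimed ball follows from the uniqueness in the scalar implicit function theorem applied at each $y$.

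The main technical point I expect to be delicate is the interplay between the pointwise-in-$y$ implicit function theorem and the weighted/uniform $y$-norms; specifically, one must verify that the smallness required to invoke the contraction at each $y$ is implied by the smallness of the single global norm $\|w\|_{(H^1(m)\cap W^{1,\infty})_y H^1_z}$, which is why the $W^{1,\infty}_y$ component of the base space (rather than merely $H^1(m)$, which need not embed into $L^\infty$ for small $m$) is essential.
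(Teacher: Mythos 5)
Your proof is correct, but it follows a genuinely different route from the paper's. The paper sets up the map $\mathbb{G}(w;v,\sigma)(z,y)=\phi(z-\sigma(y))+v(z,y)-\phi(z)-w(z,y)$ on the full Banach space of functions of $(z,y)$, invokes the abstract implicit function theorem, and verifies that the linearization $(\tilde\sigma,\tilde v)\mapsto -\phi'(z)\tilde\sigma+\tilde v$ is an isomorphism by solving $\tilde\sigma(y)=-\langle h(\cdot,y),\psi\rangle$, $\tilde v=Q_0h$. You instead observe that, with $v$ forced to be $w+\phi-\phi(\cdot-\sigma)$, the membership $v\in\mathcal{R}$ reduces to the scalar constraint $G(\sigma,y)=\langle\psi,\phi-\phi(\cdot-\sigma)\rangle_z+\langle\psi,w(\cdot,y)\rangle_z=0$ for each fixed $y$, apply the one-dimensional contraction/IFT pointwise in $y$ using $\partial_\sigma G(0,y)=\langle\psi,\phi'\rangle=1$, differentiate the constraint to get the explicit formula $\partial_{y_i}\sigma=-\langle\psi,\partial_{y_i}w\rangle/\langle\psi,\phi'(\cdot-\sigma)\rangle$, and then propagate these pointwise bounds to the $H^1(m)$ and $W^{1,\infty}$ norms. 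The underlying linear algebra is the same (nondegeneracy $\langle\psi,\phi'\rangle=1$), but your reduction to a scalar problem at each $y$ is more elementary and buys explicit formulas for $\sigma$ and $\nabla_y\sigma$, while the paper's abstract IFT is slicker and handles the norm bookkeeping automatically. You correctly flag the only delicate point — that the $W^{1,\infty}_y$ component of the base space is what makes the required pointwise smallness $|\langle\psi,w(\cdot,y)\rangle_z|\ll1$ hold uniformly in $y$ — which the paper's abstract version finesses by working with the whole space at once.
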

  The proof of the lemma involves a  standard application of the implicit function theorem, but its   proof  will be presented in the Appendix for completeness. Note that we can apply Lemma \ref{le:10} and in particular decomposition \eqref{70} for time dependent perturbations, so long as the smallness condition is satisfied. 
  
  Using the ansatz provided by  \eqref{70}, and as long as 
  $\|w(t, \cdot)\|_{(H^1(m)\cap W^{1, \infty})_y H^1_{z}}<<1$,   the equation   \eqref{12} is transformed  into the following system of equations
  \begin{eqnarray}
  \label{vv}
  \left\{ 
  \begin{array}{l}
  v_t= L v+ Q_0 H(\phi_{\sigma}, v)+ Q_0 N_1(\sigma, \nabla_y\cdot \sigma, v)\\
  \sigma_t= \De_y \sigma+ N_2(\sigma, \nabla_y \cdot \sigma, v),\\
  v(0)= v_0, \ \ 
  \sigma(0)= \sigma_0
  \end{array}  
  \right.
  \end{eqnarray}
  where $\phi_\si(z):=\phi(z-\si(t,y))$ and\footnote{Here $D^2f(\phi_{\sigma}) v^2$ is a quadratic form and it denotes  the action of the Hessian matrix $D^2f(\phi_{\sigma})$ on $(v,v)$. We will use the same convention later on for trilinear  forms}
  \begin{eqnarray*}
  && H(\phi_{\sigma}, v)= f(v+ \phi_{\sigma})- f(\phi_{\sigma})- Df(\phi_{\sigma}) =:\f{1}{2} D^2f(\phi_{\sigma}) v^2+ E(v)\\
  && N_2(\sigma, \nabla_y\cdot  \sigma, v)= K_1(\sigma) (\nabla_y \cdot \sigma)^2+ K_2(\sigma) \bigg(\langle \psi, H(\phi_{\sigma}, v) \rangle+ (D f(\phi_{\sigma})- D f(\phi)) v \rangle\bigg) \\
  && N_1(\sigma, \nabla_y \cdot  \sigma, v)=  N_2(\sigma, \nabla_y\cdot  \sigma, v) \phi'_\si+ \big(D f(\phi_{\sigma})- D f(\phi)\big) v+ (\nabla_y \cdot \sigma)^2 \phi''_{\sigma}\\
  &&K_1(\sigma)= - \f{\langle \psi, \phi''_{\sigma}\rangle}{\langle \psi, \phi'_{\sigma} \rangle}, \ \ K_2(\sigma)=   \f{1}{\langle \psi, \phi'_{\sigma} \rangle}.
  \end{eqnarray*}
  This is in fact done in great details then in \cite{TK}, see equations $(2.28),  (2.29)$ on p. 261 there. One of the important points, \cite{TK},  is that with $\|\si\|_{L^\infty}<<1$ guaranteed by Lemma \ref{le:10}, we have that 
  $\langle \psi, \phi'_{\sigma} \rangle=\dpr{\psi}{\phi'}+\dpr{\psi}{\phi'_{\sigma}-\phi'}=1+O(\si)$, whence the denominators in the coefficients $K_j(\si), j=1,2$ are away from zero. 
    
     The error term is of the form
  \begin{equation}
  \label{100} 
  E(v)=f(v+ \phi_{\sigma})- f(\phi_{\sigma})- Df(\phi_{\sigma}) v -\f{1}{2} D^2f(\phi_{\sigma}) v^2=O(v^3),
  \end{equation}
  under the assumption $f\in C^3(\rone)$ and $\phi$ is a bounded function. We provide further concrete estimate on $E(v)$ later on, where we shall need to assume $f\in C^4$, since spatial derivatives on $E$ need to be taken. See the proof of Lemma \ref{nonl} below. 
  \subsection{Main results}
  
  As we have already discussed, in this paper we  provide the sharp time decay rate for $\sigma$ and $v$  in \eqref{vv}.  The following theorems are our main results. 
  \begin{theorem}
  	\label{theo:10} Let $n\geq 2$ and $m>\f{n}{2}+1$.  There exists small $\eps_0>0$ and a constant $C$, so that the stationary solutions of \eqref{12}are asymptotically stable. More precisely, for all $\eps: 0<\eps<\eps_0$ and for all  
  	$u_0: \|u_0(z,y)-\phi(z)\|_{(H^1(m)\cap W^{1, \infty})_y H^1_{z}}<\eps$, the solution to \eqref{12} with initial data $u_0$ is global and there exists $\si\in L^\infty(\rone, (H^1(m)\cap W^{1, \infty}))$, so that 
  	$$
  	u(t,z, y)=\phi(z- \si(t,y))+v(t,z,y), \ \ v=Q_0v \in  L^\infty(\rone, (H^1(m)\cap W^{1, \infty})_y H^1_z)
  	$$
  	with 
  	 \begin{eqnarray}
  	 \label{1.10} 
  	 	& &  \|\si(t, \cdot)\|_{L^\infty_y}\leq C \eps (1+t)^{-\f{n-1}{2}} \\
  	 	 \label{1.11} 
  	 	& & \|\nabla_y \si(t, \cdot)\|_{L^\infty_y}\leq C \eps (1+t)^{-\f{n}{2}}\\
  	 	 \label{1.12} 
  	 	& & \|v\|_{L^\infty_{y,z}}\leq C\eps (1+t)^{-(n+\f{1}{2})} 
  	 \end{eqnarray}
  \end{theorem}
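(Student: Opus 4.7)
The plan is to analyze the coupled system \eqref{vv} by the scaling-variable method of Gallay--Wayne \cite{GW, GW1}, working simultaneously in $L^\infty$ and in the weighted space $L^2(m)$ with $m > \tfrac{n}{2}+1$. The key structural observation is that the phase $\sigma$ satisfies an essentially linear heat equation on $\rne$ with a localized quadratic source, so its optimal decay rate is dictated by the free heat semigroup, while the radiation $v$ is governed by the linearization $L = L_1 + \Delta_y$; since $v = Q_0 v$, this semigroup factors as $(e^{tL_1}|_{\mathcal{R}})\otimes e^{t\Delta_y}$ and inherits the exponential spectral gap of size $\delta$ from Assumption \ref{assumption} in the $z$-direction on top of heat-kernel decay in $y$. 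Therefore $v$ should decay strictly faster than $\sigma$, and, because all source terms feeding $v$ are at least quadratic in $(\sigma, \nabla_y\sigma, v)$, we can afford to close the estimate at a sharper rate.

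Concretely, I would first pass to the Duhamel formulations
\beqn
\sigma(t) = e^{t\Delta_y}\sigma_0 + \int_0^t e^{(t-s)\Delta_y} N_2(s)\,ds,
\eeqn
\beqn
v(t) = e^{tL}v_0 + \int_0^t e^{(t-s)L}Q_0\bigl[H(\phi_\sigma,v) + N_1\bigr](s)\,ds,
\eeqn
and then change to scaling variables $\eta = y/\sqrt{1+t}$, $\tau = \log(1+t)$, which turns $\Delta_y$ on $\rne$ into the Hermite-type operator $\mathcal{L} = \Delta_\eta + \tfrac{1}{2}\eta\cdot\nabla_\eta$. On $L^2(m)$ with $m > \tfrac{n}{2}+1$, the spectrum of $\mathcal{L}$ is discrete with eigenvalues $\{-k/2\}_{k\geq 0}$ plus essential spectrum far in the left half-plane, producing sharp decay $e^{-k\tau/2}$ for the $k$-th mode. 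Translating back yields $\|\sigma(t)\|_{L^2(m)} \lesssim \epsilon(1+t)^{-(n-1)/4}$ and $\|\nabla_y\sigma(t)\|_{L^2(m)} \lesssim \epsilon(1+t)^{-(n+1)/4}$; composing with a Sobolev embedding appropriate to $(H^1(m)\cap W^{1,\infty})_y$ upgrades these to the claimed $L^\infty_y$ rates \eqref{1.10}--\eqref{1.11}. For $v$, the extra factor $e^{-(\delta-\epsilon)(t-s)}$ from $e^{(t-s)L_1}Q_0$ concentrates the Duhamel integral near $s=t$ and, when combined with the quadratic decay of every source and the heat-kernel decay in $y$, delivers the sharper exponent $n+\tfrac{1}{2}$ of \eqref{1.12}.

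The proof is closed by a contraction/bootstrap argument on the single composite norm
\beqn
\|(\sigma,v)\|_X := \sup_{t\geq 0}\Bigl\{ (1+t)^{\frac{n-1}{2}}\|\sigma(t)\|_{L^\infty_y} + (1+t)^{\frac{n}{2}}\|\nabla_y\sigma(t)\|_{L^\infty_y} + (1+t)^{n+\frac{1}{2}}\|v(t)\|_{L^\infty_{y,z}} + W(t)\Bigr\},
\eeqn
where $W(t)$ collects the companion weighted $L^2(m)$-decay estimates for $(\sigma,\nabla_y\sigma,v)$ supplied by the scaling-variable analysis; these are essential because they allow one to rewrite the nonlinearities in $L^1_y$-type norms via Cauchy--Schwarz against the weight $(1+|y|^2)^m$. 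Lemma \ref{le:10} guarantees that the ansatz \eqref{70} persists as long as $\|(\sigma,v)\|_X \ll 1$, and a standard local existence result combined with the bootstrap closes the argument.

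The principal obstacle will be the quadratic term $K_1(\sigma)(\nabla_y\sigma)^2$ in $N_2$, which by the ansatz decays only at rate $(1+t)^{-n}$ in $L^\infty_y$: in order for the Duhamel integral $\int_0^t e^{(t-s)\Delta_y}[K_1(\sigma)(\nabla_y\sigma)^2]\,ds$ to remain $O((1+t)^{-n/2})$ as $t \to \infty$, one must exploit that $(\nabla_y\sigma)^2$ also belongs to an $L^1_y$-type space uniformly in time, a fact furnished only by the weighted $L^2(m)$-bound via Sobolev embedding used in reverse. A secondary difficulty is controlling the $z$-dependence of the source terms $Q_0 H(\phi_\sigma,v)$ and of the cubic remainder $E(v)$ at the level of one $y$-derivative; this is exactly the step that forces the hypothesis $f\in C^4$ flagged after \eqref{100}, and it is where the previous Sobolev-based approaches lost the factor of two in the decay rate that we aim to recover here.
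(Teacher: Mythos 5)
Your framework is the right one — scaling variables $\eta = y/\sqrt{1+t}$, $\tau = \log(1+t)$, and a bootstrap that tracks both $L^\infty$ and weighted $L^2(m)$ norms — and your identification of the quadratic term $K_1(\sigma)(\nabla_y\sigma)^2$ in $N_2$ as the principal obstruction is exactly right. That much matches the paper's setup closely. However, there is a genuine gap in the step that produces the sharp $L^\infty_y$ rates \eqref{1.10}--\eqref{1.11}, and it is precisely the step the paper calls out as its main improvement over \cite{TK,Xin}.

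You propose to first obtain $\|\sigma(t)\|_{L^2(m)_y} \lesssim \eps(1+t)^{-(n-1)/4}$ and then ``compose with a Sobolev embedding appropriate to $(H^1(m)\cap W^{1,\infty})_y$'' to upgrade this to $\|\sigma(t)\|_{L^\infty_y} \lesssim \eps(1+t)^{-(n-1)/2}$. A Sobolev embedding $H^k(m)_y\hookrightarrow L^\infty_y$ does not gain any time decay: it transfers the $(1+t)^{-(n-1)/4}$ rate to $L^\infty_y$ at the same exponent, which is exactly the suboptimal bound in \cite{TK,Xin} that the theorem is meant to improve. One could try Gagliardo--Nirenberg, using the fact that $\|\nabla_y^k\sigma\|_{L^2}$ gains an extra $(1+t)^{-1/2}$ per derivative, but that requires control of $k>\frac{n-1}{2}$ derivatives, and the scaling-variable machinery loses control of second and higher $\eta$-derivatives (the factor $a(\tau)^{-|\alpha|/2}$ in Proposition \ref{prop:14} is non-integrable in $\tau$ near $0$ once $|\alpha|\geq 2$). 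So neither route closes as you describe.

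What the paper does instead is \emph{avoid} Sobolev embedding in $y$ altogether. It proves a direct $L^\infty_\eta$ decay bound for the scaled semigroup, Proposition \ref{prop:16}:
\begin{equation*}
\|\nabla^a e^{\tau\cl_\eta}f\|_{L^\infty(\rne)} \leq C\, \f{e^{-\f{n-2}{2}\tau}}{a(\tau)^{a/2}}\big(\|f\|_{L^\infty}+\|f\|_{L^2(m)}\big),
\end{equation*}
(improved to $e^{-\f{n-1}{2}\tau}$ on the complementary subspace $\cq_0 L^2(m)$), and then keeps $\|\Gamma\|_{L^\infty_\eta}$ \emph{in the contraction norm} rather than deriving it a posteriori. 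The sharp $L^\infty_y$ rate then follows purely from the scaling relation $\|\sigma(t,\cdot)\|_{L^\infty_y}=(1+t)^{-1/2}\|\Gamma(\cdot,\tau)\|_{L^\infty_\eta}$: the extra $\f{1}{2}$ in the exponent comes from the prefactor in the change of variables, not from any embedding. You implicitly use this relation to state the expected rate, but you never supply the $L^\infty_\eta$ semigroup lemma that makes the contraction in that norm close. That estimate, proved by splitting $\tau\lessgtr 1$ and using the explicit formula $\widehat{e^{\tau\cl_\eta}f}(\xi)=e^{-\f{n-2}{2}\tau}e^{-a(\tau)|\xi|^2}\hat f(e^{-\tau/2}\xi)$ together with $L^2(m)\hookrightarrow L^1\Rightarrow\hat f\in L^\infty$, is the nontrivial ingredient your sketch is missing. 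A secondary (but structurally important) omission is the spectral decomposition $V=\alpha G+\tilde V$, $\Gamma=\gamma G+\tilde\Gamma$ along the leading eigenfunction $G$ of $\cl_\eta$, which the paper uses to separate the slowest-decaying mode (whose rate is $e^{-\f{n-2}{2}\tau}$) from the remainder (rate $e^{-\f{n-1}{2}\tau}$); without it the Duhamel contributions do not visibly decay faster than the free part and the bootstrap does not obviously close at the sharp rate.
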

  {\bf Remarks:} 
  \begin{itemize}
  		\item The estimates for $v$ can be stated in a more precise form as follows 
  		$$
  		\|v\|_{L^\infty_{y,z}}\leq C(\eps^2 (1+t)^{-(n+\f{1}{2})}+\eps e^{-\f{\de}{2}t}),
  		$$
  		of which \eqref{1.12} is a corollary. In other words, there are two terms in the formula for $v$ - one linear in $\eps$, but decaying exponentially in $t$ (coming from free solutions), while the other decaying at the right power rate, but quadratic in $\eps$, which comes from the Duhamel's term and the nonlinearity respectively.  
  	\item The decay estimates in $L^\infty_{y z}$ norms \eqref{1.10}, \eqref{1.12} should be compared with the estimates in \cite{Xin}, \cite{TK}. As the arguments in these papers require the use of Sobolev embedding into $H^{k}$ spaces, it only provides the bound $\|\si\|_{L^\infty}\leq C \eps (1+t)^{-\f{n-1}{4}}$, whereas \eqref{1.10} is clearly much better. In fact, \eqref{1.10} is sharp, as shown in Theorem \ref{theo:25} below. The estimate \eqref{1.12}  for $v$ above is also clearly superior to the one provided in \cite{TK}. 
  
  	\item We have more estimates for $\si, v$ than the one stated in Theorem \ref{theo:10}. In particular, $v, \si$ belong to weighted $L^2$ spaces and in fact, one can write   estimates as follows - for every $0\leq \tilde{m}\leq m$, 
$$  
\left(\int_{\rne} |\si(t,y)|^2 |y|^{2\tilde{m}}  dy \right)^{1/2} \leq C\eps 
(1+t)^{-\f{1}{2}( \f{n-1}{2}-\tilde{m})},  
$$
  This estimate gives an  algebraic decay for $\tilde{m}<\f{n-1}{2}$,  but they are true even if $\tilde{m}$ is larger, that is the corresponding weighted $L^2$ norms may be growing in $t$. In  the case $\tilde{m}=0$, these become the usual $L^2$ spaces. One can in fact see that the result, in this case exactly matches the $L^2$ bounds in \cite{TK}. 
  \item One disadvantage of our method is that one cannot get estimates for $\nabla^2_y \si$ nor $\nabla^2_y v$ (and higher order derivatives), due to a technical issue that arises in the scaled variable analysis, see the remark after Proposition \ref{prop:14} below.  Such estimates are clearly possible, as was demonstrated in \cite{TK}.   On the other hand, we believe that this is really a technical issue, which we have not explored further. 
  \end{itemize}
  	 The rates established in Theorem \ref{theo:10}  are sharp. Specifically, we have the following result, which we formulate as a separate theorem. 
  	 \begin{theorem}
  	 	\label{theo:25} 
  	 	Under the assumptions of Theorem \ref{theo:10}, the estimates \eqref{1.10}, \eqref{1.11} and \eqref{1.12} are sharp. More precisely,   	let $u_0: \|u_0(y,z)-\phi(z)\|_{(H^1(m)\cap W^{1, \infty})_y H^1_{z}}<\eps$ and $\si_0 \in H^1(m)\cap W^{1, \infty}$,  $v_0=Q_0 v_0  \in (H^1(m)\cap W^{1, \infty})_y H^1_z$ be the unique pair guaranteed by Lemma \ref{le:10}, so that 
  	 	$$
  	 	u_0(y,z)=\phi(z-\si_0(y))+v_0(z,y). 
  	 	$$
  	 	Then, we have the following 	
  	 		\begin{eqnarray}
  	 		\label{71} 
  	 		& &   \left\|\si(t, \cdot)-  \f{(\int_{\rne} \si_0(y) dy)}{(1+t)^{\f{n-1}{2}}}  G\left(\f{\cdot}{\sqrt{1+t}}\right) \right\|_{L^\infty_y}\leq \f{C\eps^2}{(1+t)^{\f{n}{2}}}, \\
  	 			\label{72} 
  	 		& &   \left\|\p_j \si(t, \cdot)-  \f{(\int_{\rne} \si_0(y) dy)}{(1+t)^{\f{n}{2}}}  
  	 		(\p_jG)\left(\f{\cdot}{\sqrt{1+t}}\right) \right\|_{L^\infty_y}\leq \f{C\eps^2}{(1+t)^{\f{n+1}{2}}}, j=1, \ldots, n-1,
  	 		\end{eqnarray}
  	 		where $G(y)= (4\pi)^{-\f{n-1}{2}} e^{-\f{|y|^2}{4}}$. In particular, assuming that $\int_{\rne} \si_0(y) dy\neq 0$, we have the asymptotics 
  	 		$$
  	 		\|\si(t, \cdot)\|_{L^\infty_y}\simeq \eps (1+t)^{-\f{n-1}{2}}, \ \ 	\|\nabla \si(t, \cdot)\|_{L^\infty_y}\simeq \eps (1+t)^{-\f{n}{2}}
  	 		$$
  	 		Regarding $v$, we have that for\footnote{note that $L_1$ is invertible on $Q_0[L^2_z]$ or $L_1^{-1} Q_0$ is well defined}  $n\geq 3$, 
  	 	\begin{equation}
  	 	\label{831} 
  	 		 \|v(t,z,y) + \f{(\int_{\rne} \si_0(y) dy)^2}{(4\pi)^{n-1}} \f{e^{-\f{|y|^2}{2(t+1)}}}{(t+1)^{n+\f{1}{2}}} L_1^{-1} Q_0[\phi''] (z) \|_{L^\infty_{z,y}}\leq C(\eps^2 (1+t)^{-n-1}+\eps e^{-\f{\de}{2}t}). 
  	 	\end{equation}	
  	 	whereas for $n=2$, 
  	 	\begin{equation}
  	 	\label{832} 
  	 	\|v(t,z,y) + \f{(\int_{\rone} \si_0(y) dy)^2}{4\pi} \f{e^{-\f{|y|^2}{2(t+1)}}}{(t+1)^{\f{5}{2}}} L_1^{-1} Q_0[\phi''] (z) \|_{L^\infty_{z,y}}\leq C(\eps^3 (1+t)^{-\f{5}{2}}+ \eps^2 (1+t)^{-3}+\eps e^{-\f{\de}{2}t}). 
  	 	\end{equation}	
  	 	In particular, if $\int_{\rne} \si_0(y) dy\neq 0$, we have the asymptotics  
\begin{equation}
\label{862} 
	\|v(t, \cdot)\|_{L^\infty_{y,z}}\simeq \eps^2 (1+t)^{-n-\f{1}{2}}. 
\end{equation}
  	 	\end{theorem}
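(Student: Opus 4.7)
The plan is to refine Theorem \ref{theo:10} by Duhamel expansion for both $\si$ and $v$: the leading asymptotic profiles are extracted by substituting the decay rates already established into the nonlinear forcing and exploiting the spectral structure of $L_1Q_0$.

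\textbf{Step 1: phase profile \eqref{71}--\eqref{72}.} Writing Duhamel on the $\si$-equation $\si_t=\De_y\si+N_2$ gives
\[
\si(t)=e^{t\De_y}\si_0+\int_0^t e^{(t-s)\De_y}N_2(s)\,ds.
\]
A standard first-moment expansion of the heat kernel on $\rne$ yields the bound $\bigl\|e^{t\De_y}\si_0-(\int_{\rne}\si_0(y)\,dy)G_t\bigr\|_{L^\infty_y}\le C(1+t)^{-n/2}\|\si_0\|_{L^1(|y|\,dy)}$, where the right-hand side is controlled by Cauchy--Schwarz from the weighted hypothesis $\si_0\in H^1(m)$ with $m>\tfrac{n}{2}+1$. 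The nonlinear Duhamel is bounded using the pointwise estimate $\|N_2(s)\|_{L^\infty_y}\lesssim\|\nabla_y\si(s)\|_{L^\infty_y}^2+\cdots\lesssim\eps^2(1+s)^{-n}$ from Theorem \ref{theo:10}; together with the heat semigroup bound this produces a contribution of order $\eps^2(1+t)^{-n/2}$, which proves \eqref{71}. Estimate \eqref{72} follows by the analogous argument applied to $\nabla_y\si$, picking up one extra $(1+t)^{-1/2}$ gradient factor from the derivative of the heat kernel.

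\textbf{Step 2: radiation profile \eqref{831}--\eqref{832}.} The Duhamel representation reads
\[
v(t)=e^{tL}Q_0v_0+\int_0^t e^{(t-s)L}Q_0\bigl[H(\phi_\si,v)+N_1(\si,\nabla_y\si,v)\bigr](s)\,ds.
\]
Since $L=L_1+\De_y$ with mutually commuting summands and $L_1Q_0$ has spectral gap $\de$, the linear propagator obeys $\|e^{tL}Q_0v_0\|_{L^\infty_{y,z}}\lesssim\eps e^{-\de t/2}$, accounting for the exponentially small term in \eqref{831}--\eqref{832}. For the nonlinear integral, inspection of $Q_0[H+N_1]$ together with Theorem \ref{theo:10} shows that the slowest-decaying source is $|\nabla_y\si|^2\,Q_0[\phi''_\si]$: every other contribution carries either an extra factor of $v$ or an extra factor of $\si$ via $Q_0\phi'_\si=-\si Q_0\phi''+O(\si^2)$ (using $Q_0\phi'=0$), and so decays strictly faster. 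Substituting the asymptotic $\si\approx(\int_{\rne}\si_0(y)\,dy)G_t$ from Step 1, using $e^{\tau L}Q_0=e^{\tau L_1}Q_0\cdot e^{\tau\De_y}$, and invoking the identity
\[
\int_0^\infty e^{\tau L_1}Q_0[\phi''](z)\,d\tau=-L_1^{-1}Q_0[\phi''](z)
\]
(valid on the range of $Q_0$ by the spectral gap) localizes the $\tau=t-s$ integration to $\tau=O(1)$ and yields, after tracking the interaction between the evolving Gaussian $\si$-profile and the $z$-integration, the explicit leading term displayed in \eqref{831}. The error $\eps^2(1+t)^{-(n+1)}$ collects the $\si$-remainder from Step 1, the Taylor corrections in evaluating $e^{(t-s)\De_y}|\nabla_y\si(s)|^2$ near $s=t$, and the cubic source terms.

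\textbf{Step 3: the case $n=2$ and main obstacle.} The additional error $\eps^3(1+t)^{-5/2}$ in \eqref{832} arises because for $n=2$ the integral $\int_0^t(1+s)^{-(n+1)/2}\,ds=\int_0^t(1+s)^{-3/2}\,ds$ is only marginally convergent, so cubic contributions that are absorbed into $\eps^2(1+t)^{-(n+1)}$ when $n\ge 3$ accumulate at a slower rate when $n=2$. The sharp equivalences $\|\si(t,\cdot)\|_{L^\infty_y}\simeq\eps(1+t)^{-(n-1)/2}$, $\|\nabla\si(t,\cdot)\|_{L^\infty_y}\simeq\eps(1+t)^{-n/2}$, and $\|v(t,\cdot)\|_{L^\infty_{y,z}}\simeq\eps^2(1+t)^{-(n+1/2)}$ then follow at once from \eqref{71}--\eqref{832} and the assumption $\int_{\rne}\si_0(y)\,dy\neq 0$, by observing that each leading profile has $L^\infty$-norm of the claimed size while the remainder decays strictly faster. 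The principal technical difficulty lies in Step 2: both the identification of the slowest source within $Q_0[H+N_1]$ and the precise evaluation of the Duhamel integral for $v$ require delicate use of the spectral gap of $L_1Q_0$ combined with the scaling-variable framework of \cite{GW,GW1} for the $y$-dependent Gaussian asymptotics.
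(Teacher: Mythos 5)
Your overall plan is sound and follows the same conceptual skeleton as the paper — free solution profile for $\si$, identification of the dominant nonlinear source $|\nabla_y\si|^2 Q_0[\phi''_\si]$ for $v$, and exponential decay of $e^{\tau L_1}Q_0$ to localize the Duhamel integral — but you execute it by a genuinely different route. Whereas Section \ref{sec:5} of the paper stays almost entirely in the scaling variables $(\tau,\eta)$, works with the spectral decomposition $\Ga=\ga(\tau)G+\widetilde\Ga$ of the operator $\cl_\eta$, and only translates back to $(t,y)$ at the end, you work directly with the heat semigroup $e^{t\De_y}$ on $\rne$ in the original variables, invoking the classical first-moment expansion of the Gaussian kernel to produce the profile $\bigl(\int\si_0\bigr)G_t$ with error $(1+t)^{-n/2}$. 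For the radiation term, the paper carries out the Fourier computation of $e^{(t-s)\De_y}[H(\cdot/\sqrt{1+s})]$ and then integrates by parts in $s$ against $e^{(t-s)L_1}L_1^{-1}Q_0\phi''$, while you appeal directly to $\int_0^\infty e^{\tau L_1}Q_0[\phi'']\,d\tau=-L_1^{-1}Q_0[\phi'']$; these are the same underlying observation, and your phrasing is a bit cleaner, though the paper's gives the explicit kernel $M(t,t,y)$ with less hand-waving about "tracking the interaction." Either approach is viable, and yours is arguably more elementary since it sidesteps Propositions \ref{prp:01}--\ref{prop:16}.

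However, your explanation of the extra $\eps^3(1+t)^{-5/2}$ error in \eqref{832} for $n=2$ is not right. You attribute it to the integral $\int_0^t(1+s)^{-3/2}\,ds$ being "only marginally convergent," but that integral is absolutely convergent (the marginal case would be exponent $-1$), so no such accumulation occurs. The actual mechanism, spelled out at the end of Section \ref{sec:5} and in the remark after Lemma \ref{nonl}, is a coincidence of decay exponents: in the scaled variables, the cubic source $N_2(\Ga,\nabla_\eta\cdot\Ga,V)\,Q_0[\phi'_{e^{-s/2}\Ga}]$ decays like $\eps^3 e^{-\frac{3n-5}{2}s}$ (because $Q_0\phi'_{e^{-s/2}\Ga}=O(e^{-s/2}\Ga)$), while the quadratic main source $e^{-s/2}(\nabla_\eta\cdot\Ga)^2 Q_0[\phi''_{e^{-s/2}\Ga}]$ decays like $\eps^2 e^{-(n-\frac{3}{2})s}$. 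For $n\ge3$ one has $\frac{3n-5}{2}>n-\frac{3}{2}$, so the cubic term decays strictly faster and is swallowed by the $\eps^2(1+t)^{-n-1}$ remainder; precisely at $n=2$ the two exponents coincide ($\frac{3n-5}{2}=n-\frac{3}{2}=\frac{1}{2}$), so the cubic term survives at the same temporal rate as the main term and can only be suppressed by its extra factor of $\eps$. Rewriting Step 3 around this exponent coincidence, rather than around convergence of an auxiliary $s$-integral, would bring your argument into agreement with the mechanism the theorem statement is actually encoding.
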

  	 	{\bf Remarks:} 
  	 	\begin{itemize}
  	 		\item The asymptotic expansion for $\si$ improves both in the order of $\eps$ and the decay rate - the leading order term is order $\eps (1+t)^{-\f{n-1}{2}}$, while the error is $\eps^2 (1+t)^{-\f{n}{2}}$. This is due to the fact that the leading order term entirely originates  from the free solution.
  	 		\item  In contrast, the expansion for $v$ has a main term, which is $\eps^2 (1+t)^{-n-\f{1}{2}}$ and two to three types of error terms - an exponentially decaying in $t$, but linear in $\eps$ (originating from initial data) and faster decaying, but still quadratic in $\eps$ terms, originating from various other nonlinear terms. In the case $n=2$, we recover yet another term, which decays like the main term, but it is order of $\eps$ smaller. Most importantly, the structure of the error terms guarantees \eqref{862}. 
  	 	\end{itemize}
  \section{Preliminary steps} 
  In this section, we transform the evolution equation \eqref{vv} into an equivalent one, through the use of the so-called scaling variables. 
  
  \subsection{The evolution system in scaling variables} Introduce  the scaling variables 
  $$
  \tau= \ln(1+ t), \ \ \eta_j= \frac{y_j}{\sqrt{1+ t}}, j=2, \ldots, n. 
  $$
In  these independent variables, set the new dependent variables $V, \Ga$ as follows 
$$
v(z, y, t)= \frac{1}{{1+ t}} V\left(z, \frac{y}{\sqrt{1+ t}},  \ln(1+ t)\right), \ \ \sigma(y, t)= 
\frac{1}{\sqrt{1+ t}} \Ga \left(\frac{y}{\sqrt{1+ t}}, \ln(1+ t)\right).
$$
 Straightforward computations show 
   \begin{eqnarray*}  
   v_t&=& - \f{1}{(1+ t)^2} V- \f{1}{2} \f{1}{(1+ t)^2} \f{y}{\sqrt{1+ t}} \cdot \nabla_{\eta} V+ \f{1}{(1+ t)^2} V_{\tau}, 
   \De_y v = \f{1}{(1+ t)^2} \De_{\eta} V,  \\
      L_1 v &=&  \f{1}{{1+ t}} L_1 V, \ \ \ \ 
   H(\phi_{\sigma}, v) = \f{1}{2} \f{1}{(1+ t)^2} D^2 f(\phi_{\f{1}{\sqrt{1+ t} } \Ga}) V^2+E((1+t)^{-1} V),\\ 
    (\nabla_{y} \cdot \sigma)^2 \phi''_{\sigma} &=&  \f{1}{(1+ t)^2} (\nabla_{\eta} \cdot\Ga)^2  
    \phi''_{\f{1}{\sqrt{1+ t}}\Ga }\\ 
   N_2(\sigma, \nabla_y \cdot  \sigma, v)&=&  \f{1}{(1+ t)^2} \left[K_1 ( (1+t)^{-1/2}   \Ga) (\nabla_{\eta}  \cdot \Ga)^2+2K_2 ((1+t)^{-1/2}  \Ga) D^2 f(\phi_{\f{1}{\sqrt{1+ t}}\Ga}) \dpr{\psi}{V^2}\right] \\  
   &+&  K_2 ((1+t)^{-1/2} \Ga)  \dpr{\psi}{E((1+t)^{-1} V)} + \\
   &+& \f{1}{{1+ t}} K_2( (1+t)^{-1/2} \Ga) \langle \psi, (D f(\phi_{\f{1}{\sqrt{1+ t}}\Ga})- D f(\phi) ) V\rangle  =: \f{1}{(1+ t)^2} N_2(\Ga, \nabla_{\eta} \cdot \Ga, V) \\ 
   \sigma_t&=&- \f{1}{2} \f{1}{(1+ t)^{\f{3}{2}}} \Ga- \f{1}{2} \f{1}{(1+ t)^{\f{3}{2}}} \f{y}{\sqrt{1+ t}} \cdot \nabla_{\eta} \cdot \Ga+ \f{1}{(1+ t)^{\f{3}{2}}} \Ga_{\tau}, \ \   \De_{y} \sigma = \f{1}{(1+ t)^{\f{3}{2}}} \De_{\eta} \Ga \\
   N_1(\sigma, \nabla_y \cdot \sigma, v)&=& \f{1}{(1+ t)^2} N_2(\Ga, \nabla_{\eta} \cdot \Ga, V)\phi'_{\f{1}{\sqrt{1+ t}}\Ga } + \f{1}{{1+ t}} \big( D f(\phi_{\f{1}{(\sqrt{1+ t})}\Ga})- D f(\phi) \big) V+ \\
   &+& \f{1}{(1+ t)^2} (\nabla_{\eta}\cdot \Ga)^2 \phi''_{\f{1}{\sqrt{1+ t}}\Ga } =:  \f{1}{(1+ t)^2} N_1(\Ga, \nabla_{\eta} \cdot \Ga, V). 
   \end{eqnarray*}
  So, we have introduced a new set of nonlinearities, which in the new variables $(\tau, \eta)$ take the form  
  \begin{eqnarray*}  
  H(\Ga, V)&=& \f{1}{2}  D^2 f(\phi_{e^{-\f{\tau}{2}} \Ga}) V^2+ e^{2\tau} E(e^{-\tau} V),\\ 
  N_2(\Ga, \nabla_{\eta} \cdot  \Ga, V)&=&   K_1 (e^{- \f{\tau}{2}} \Ga) (\nabla_{\eta} \cdot \Ga)^2+ \f{1}{2 }  K_2(e^{- \f{\tau}{2}} \Ga) \bigg(    D^2 f(\phi_{e^{- \f{\tau}{2}} \Ga}) \langle V^2, \psi\rangle \\ \nonumber
  &+& e^{2\tau} K_2(e^{-\f{\tau}{2}}\Ga)\dpr{\psi}{E(e^{-\tau} V)} + 2 e^{\tau}  \langle \psi, (D f(\phi_{e^{- \f{\tau}{2}}\Ga})- D f(\phi) ) V\rangle \bigg), \\ 
  N_1(\Ga, \nabla_{\eta} \cdot  \Ga, V)&=&  N_2(\Ga, \nabla_{\eta}\cdot  \Ga, V)\phi'_{e^{-\f{\tau}{2}} \Ga } + e^{\tau} \big( D f(\phi_{e^{- \f{\tau}{2}}\Ga})- D f(\phi) \big) V+ 
  e^{- \f{\tau}{2}} (\nabla_{\eta} \cdot \Ga)^2 \phi''_{e^{-\f{\tau}{2}} \Ga }.
  \end{eqnarray*}
  Therefore the system \eqref{vv} is  transfered into the   system 
  \begin{eqnarray}\label{VV}
  \left\{ 
  \begin{array}{l}
  V_{\tau}= (\mathcal{L}_{\eta}+ \f{1}{2}) V+ e^{\tau} L_1 V+ Q_0 H(\Ga, V)+ Q_0 N_1 (\Ga, \nabla_{\eta}   \cdot \Ga, V)\\
  \Ga_{\tau}= \mathcal{L}_{\eta} \Ga+  e^{-\f{\tau}{2}}  N_2 (\Ga, \nabla_{\eta} \cdot  \Ga, V)
  \end{array}  
  \right.
  \end{eqnarray}
  where $H, N_1, N_2$ are defined above  and the operator $\mathcal{L}_{\eta}$ is defined as 
  \begin{equation}
  \label{L_eta}
  \mathcal{L_{\eta}}= \De_{\eta}+ \f{1}{2} \eta \cdot \nabla_{\eta}+ \f{1}{2}.
  \end{equation}
  We finish this subsection by stating the variation of constant formula for \eqref{VV}. Note that this is slightly non-standard, due to the $\tau$ dependence of the linear operator, i.e. the term $e^\tau L_1$ term, in the equation for $V$. It should be noted that  $L_1$ generates a $C_0$ semigroup on the Sobolev space $H^1(\rone)$ (see Lemma \ref{le:l1} below), while  the operator $L_\eta$ generates a semigroup, but on specific weighted $L^2$ based spaces, see Section \ref{sec:2.2} below. 
  Thus, since the action in the variable $z$ and the variable $\eta$ are independent, we may in fact write the system for $(V, \Ga)$ as follows 
   \begin{eqnarray}  
   \label{110} 
   V &=&  e^{\tau (\mathcal{L}_{\eta}+ \f{1}{2})} e^{e^{\tau} L_1} V_0+  \int_0^{\tau} e^{(\tau -s) (\mathcal{L}_{\eta}+ \f{1}{2})} e^{(e^{\tau}- e^s) L_1}[Q_0 H(\Ga, V)+ Q_0 N_1 (\Ga, \nabla_{\eta}  \cdot \Ga, V) (s)]ds\\ 
   \label{120} 
   \Ga &=&  e^{\tau \mathcal{L_{\eta}}}  \Ga_0+  \int_0^{\tau} e^{(\tau -s) \mathcal{L_{\eta}}}    
   e^{-\f{s}{2}}  N_2 (\Ga, \nabla_{\eta} \cdot  \Ga, V) (s) ds,
   \end{eqnarray}
  where $V_0, \Ga_0$ are the initial data of the variables $V, \Ga$. Note that by the scaling variables assignments, $V_0(z,y)=v_0(z,y), \Ga_0(y)=\si_0(y)$. 
  
  It becomes clear by this last formulas that in order to study the long time properties of the system \eqref{110}, \eqref{120}, it will be helpful to know about spectral properties of $L_\eta$ and estimates of the associated  semigroup.

 \subsection{The operator $L_{\eta}$ - spectral information and  the associated semigroup} 
 \label{sec:2.2} 
 For this section, note that the spaces that we introduce are based on $\rne$, instead of the usual $\rn$. This is due to the fact that the scaling variables transformation is performed only in the variables $y\in\rne$. 
 
 The following results are  due to Gallay-Wayne, see Theorem A.1 in \cite{GW}. Note however that the operator  $\cl$  appearing in \cite{GW}, satisfies   $\cl_\eta=\cl - \f{N-1}{2}$ and $N=n-1$. 
 \begin{proposition}
 	\label{prp:01} 
 	Let $m\geq 0$ and $\mathcal{L}_{\eta}$ be the linear operator \eqref{L_eta} acting on $L^2(m)$, and \\  $G(\eta)=  (4\pi)^{-\f{n-1}{2}} e^{- \f{|\eta|^2}{4}}$.  Then, its spectrum consists of\footnote{this is a not necessarily disjoint partition, as some eigenvalues are embedded into the continuous spectrum} $\si(\cl_\eta)=\sigma_d(\mathcal{L}_{\eta})\cup \sigma_{c}(\mathcal{L}_\eta)$, where 
 	\begin{enumerate}
 		\item   The discrete spectrum is 
 		$$
 		\sigma_d(\mathcal{L}_{\eta})=  \bigg\{\la_k \in \mathbb{C}: \la_k= -\f{n+ k- 2}{2};  k= 0, 1, 2, \cdots \bigg\}.
 		$$
 		\item  The continuous spectrum is  
 		$$
 		\sigma_{c}(\mathcal{L}_\eta)=  \bigg\{ \la \in \mathbb{C}:  \Re \la  \leq  -\f{n+5}{4} - \f{m}{2}  \bigg \}.
 		$$
 	\end{enumerate}
 	Moreover, for $m>\f{n-1}{2}$,  the largest element of $\si(\cl_\eta)$ is the eigenvalue $\la_0=-\f{n-2}{2}$ is simple,  with an eigenfunction $G$,  which satisfies 
 	$$
 \cl_\eta G=\la_0 G, \ \ \si(\cl_\eta)\setminus \{-\f{n-2}{2}\}\subset \{\la: \Re\la\leq -\f{n-1}{2}\}
 	$$
 \end{proposition}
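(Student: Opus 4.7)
The plan is to treat $\mathcal{L}_\eta$ as an Ornstein--Uhlenbeck type operator and extract its spectrum in two pieces: the point spectrum from explicit eigenfunctions built from $G$ and its derivatives, and the essential spectrum from a Fourier-side analysis where $\mathcal{L}_\eta$ becomes a transport operator whose resolvent can be written out via characteristics. First I would verify by direct computation that $\mathcal{L}_\eta G = -\tfrac{n-2}{2} G$; indeed $\Delta G = \bigl(\tfrac{|\eta|^2}{4} - \tfrac{n-1}{2}\bigr)G$ and $\tfrac{1}{2}\eta\cdot\nabla G = -\tfrac{|\eta|^2}{4}G$, so the quadratic terms cancel and $\mathcal{L}_\eta G = (-\tfrac{n-1}{2}+\tfrac{1}{2})G$. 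A standard commutator identity $[\mathcal{L}_\eta, \partial_j] = -\tfrac{1}{2}\partial_j$ then shows that for every multi-index $\alpha$ with $|\alpha|=k$, $\partial^\alpha G$ is an eigenfunction of $\mathcal{L}_\eta$ with eigenvalue $\lambda_k = -\tfrac{n+k-2}{2}$. These functions, being (up to Gaussian factor) Hermite polynomials in $\eta$, form a basis of the span of $G$-weighted polynomials, which is dense in $L^2(m)$ for any $m$; this identifies the full discrete spectrum.

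For the essential spectrum I would pass to the Fourier side. Using $\widehat{\eta\cdot\nabla f}(\xi) = -(n-1)\hat f - \xi\cdot\nabla\hat f$, one gets
\[
\widehat{\mathcal{L}_\eta f}(\xi) = -|\xi|^2 \hat f(\xi) - \tfrac{1}{2}\xi\cdot\nabla\hat f(\xi) - \tfrac{n-2}{2}\hat f(\xi),
\]
a first-order transport operator with drift $-\tfrac{1}{2}\xi$ and zeroth-order potential $-|\xi|^2-\tfrac{n-2}{2}$. The resolvent equation $(\lambda - \widehat{\mathcal{L}_\eta})\hat u = \hat f$ is then solved along characteristics $\xi(s) = e^{-s/2}\xi$, yielding an explicit integral representation
\[
\hat u(\xi) = \int_0^{\infty} e^{-\lambda s} e^{-(n-2)s/2} \exp\!\Bigl(-\int_0^s |e^{-r/2}\xi|^2\, dr\Bigr)\, \hat f(e^{-s/2}\xi)\, ds.
\]
Convergence of this integral for all $\hat f$ in the image of $L^2(m)$ under the Fourier transform is controlled by how fast $\hat f(e^{-s/2}\xi)$ can blow up near $\xi=0$; the weight $m$ corresponds, via Plancherel with the weight $(1+|y|^2)^m$, to smoothness of $\hat f$ measured in a specific $H^m$-like way, and the critical threshold for the characteristic integral to converge is precisely $\Re\lambda > -\tfrac{n+5}{4}-\tfrac{m}{2}$. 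Beyond this threshold the resolvent fails to be bounded, identifying the essential spectrum. (Equivalently, one may conjugate by the Gaussian to recover the standard self-adjoint Ornstein--Uhlenbeck operator on $L^2(G^{-1}d\eta)$ and then transplant back; Gallay--Wayne do this in their Appendix A.)

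Simplicity of $\lambda_0 = -\tfrac{n-2}{2}$ for $m > \tfrac{n-1}{2}$ follows because any element of $\ker(\mathcal{L}_\eta + \tfrac{n-2}{2})$ must, by the transport equation on the Fourier side, be of the form $\hat u(\xi) = c\, e^{-|\xi|^2}$ up to homogeneous solutions that are excluded by the $L^2(m)$ integrability requirement; hence $u = c G$. The gap statement $\sigma(\mathcal{L}_\eta)\setminus\{\lambda_0\}\subset\{\Re\lambda \le -\tfrac{n-1}{2}\}$ is then immediate from combining $\lambda_1 = -\tfrac{n-1}{2}$ with the essential spectrum bound $-\tfrac{n+5}{4}-\tfrac{m}{2} \le -\tfrac{n-1}{2}$ (which holds for $m\ge \tfrac{n-1}{2}$, in particular for $m>\tfrac{n-1}{2}$). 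The main obstacle, and the most delicate part of the Gallay--Wayne analysis, is the matching between the weight index $m$ on the physical side and the smoothness index on the Fourier side needed to pin down the essential spectrum location exactly; one must work in the correct scale of weighted spaces and check that no accidental spectrum is created at the boundary. Everything else (closedness of $\mathcal{L}_\eta$, sectoriality, the compact embedding arguments that reduce point spectrum to explicit eigenfunctions) is routine once this correspondence is in hand.
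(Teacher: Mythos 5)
The paper does not prove this proposition; it simply cites Theorem A.1 of Gallay--Wayne \cite{GW} and notes the shift $\cl_\eta = \cl - \f{N-1}{2}$ with $N = n-1$. Your sketch is a reconstruction of the Gallay--Wayne argument itself, which is a reasonable thing to do, and the elementary computations you lead with are correct: $\cl_\eta G = -\f{n-2}{2} G$ follows from the displayed identities, and the commutator $[\cl_\eta, \p_j] = -\f{1}{2}\p_j$ does generate the chain $\la_k = -\f{n+k-2}{2}$ with eigenfunctions $\p^\al G$. So you have the eigenfunctions, the eigenvalues, and the outline of the Fourier-side transport picture.

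However, as a proof your sketch has two genuine gaps, both in exactly the places you label ``routine.''  First, \emph{density of the Hermite-type eigenfunctions in $L^2(m)$ does not identify the point spectrum.} Density tells you that the closed span of eigenvectors is the whole space, which is useful, but it does not preclude additional eigenvalues: for a non-self-adjoint operator on $L^2(m)$ the eigenfunctions corresponding to distinct eigenvalues need not be orthogonal, and embedded eigenvalues are not ruled out by a density statement. What is actually needed is a resolvent bound for $\la \notin \si_d \cup \si_c$, and that is precisely what the Fourier-side representation is for. But this brings us to the second gap: \emph{you assert the threshold $\Re\la > -\f{n+5}{4} - \f{m}{2}$ without deriving it.} The entire substance of the Gallay--Wayne Appendix is the weight accounting that pins this number down, i.e.\ the translation between the polynomial weight $(1+|\eta|^2)^m$ in physical space and a fractional-Sobolev-type condition in Fourier space, and then the careful estimate of the characteristic integral near $\xi = 0$ under that condition. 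Without this estimate you have not located $\si_c$, you have not shown the resolvent is bounded on the complement, and hence you also have not shown that $\la_0$ is isolated and simple for $m > \f{n-1}{2}$ --- simplicity requires knowing both that $G$ spans the kernel \emph{and} that $\la_0$ is a simple pole of the resolvent, which is again a statement about the resolvent that your argument defers.

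A minor point: the Fourier identity $\wh{\cl_\eta f} = -4\pi^2|\xi|^2\hat f - \f{1}{2}\xi\cdot\nabla\hat f - \f{n-2}{2}\hat f$ under the paper's convention has the $4\pi^2$ factor; you wrote $-|\xi|^2$. This does not affect the structure of the characteristic argument but should be tracked if you actually carry out the integral estimate. In short: the conceptual route is correct and matches the cited source, but to turn it into a proof you must carry out the resolvent estimate that produces the $-\f{n+5}{4} - \f{m}{2}$ threshold; everything currently resting on ``density'' or ``routine once the correspondence is in hand'' is precisely the non-routine part.
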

 In our next proposition, we discuss the semigroup generation properties. 
 \begin{proposition}
 	\label{prop:semi} 
 	  	The operator $\cl_\eta$ defines a $C_0$ semigroup on $L^2(m)(\mathbb{R}^{n- 1})$.  We have the following  formula for its action 	
 		
 		\begin{eqnarray}
 		\label{semi_1}
 		\widehat{(e^{\tau \cl_{\eta}} f)}(\xi)&=& e^{-\f{n- 2}{2} \tau} e^{- a(\tau) |\xi|^{2}} \widehat{f}(e^{-\f{ \tau}{2}} \xi),  \\ 
 		\label{semi_2}
 		(e^{\tau \cl_{\eta}}f) (\eta)&=& \f{e^{\f{\tau}{2}}}{\big(4 \pi a(\tau) \big)^{\f{n- 1}{2}}} \int_{\rne} G \left(\f{\eta- \eta'}{2 a(\tau)^{\f{1}{2}}}\right) f(e^{\f{\tau}{2}}\eta' ) d \eta',
 		\end{eqnarray} 
 		where $a(\tau)= 1- e^{- \tau}$.  
 \end{proposition}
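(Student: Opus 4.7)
The plan is to compute the semigroup essentially explicitly and then read off the $C_0$-generation property a posteriori. The operator $\cl_\eta$ is of Ornstein--Uhlenbeck type, and the key idea is that a time-dependent rescaling converts it to the standard heat equation on $\rne$; this is in fact the inverse of the scaling variables transformation introduced in Section~\ref{sec:1.2}. Concretely, if $w(y,t)$ solves $w_t=\De_y w$ on $\rne$, a direct chain rule verifies that
$$W(\eta,\tau)\ :=\ e^{\tau/2}\,w\bigl(e^{\tau/2}\eta,\,e^{\tau}-1\bigr)$$
solves $W_\tau=\cl_\eta W$ with $W(\cdot,0)=w(\cdot,0)$: the Laplacian comes from the spatial dilation $y=e^{\tau/2}\eta$, the drift $\tfrac12 \eta\cdot\nabla_\eta$ from characteristics moving outward with $\tau$, and the zero-th order $\tfrac12$ from the prefactor coupled with $t=e^{\tau}-1$. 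Equivalently, applying the Fourier transform to $V_\tau=\cl_\eta V$ and using $\widehat{\eta\cdot\nabla V}=-(n-1)\widehat V-\xi\cdot\nabla_\xi\widehat V$ turns the equation into the first-order transport equation
$$\widehat V_\tau+\tfrac12\xi\cdot\nabla_\xi\widehat V\ =\ -\bigl(|\xi|^2+\tfrac{n-2}{2}\bigr)\widehat V,$$
which is solved along the characteristics $\xi(\tau)=e^{\tau/2}\xi_0$.

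Either route gives \eqref{semi_1} directly: the transport contributes the dilation $\widehat f(e^{-\tau/2}\xi)$, the integrated exponential factor collapses to $e^{-(1-e^{-\tau})|\xi|^2}=e^{-a(\tau)|\xi|^2}$, and the remaining constant factor absorbs into $e^{-(n-2)\tau/2}$. For \eqref{semi_2} I would recognize $e^{-a(\tau)|\xi|^2}$ as the Fourier transform of a Gaussian heat kernel of time $a(\tau)$ and $\widehat f(e^{-\tau/2}\xi)$ as $e^{(n-1)\tau/2}$ times the Fourier transform of $f(e^{\tau/2}\cdot)$, and invert the product as a convolution; after matching constants against $G(\eta)=(4\pi)^{-(n-1)/2}e^{-|\eta|^2/4}$ one reads off the kernel claimed in \eqref{semi_2}. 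For the $C_0$-semigroup property on $L^2(m)$, the semigroup law follows at once from \eqref{semi_1} together with the identity $a(\tau+s)=a(\tau)+e^{-\tau}a(s)$ and the composition of the two dilations, while boundedness on $L^2(m)$ and strong continuity at $\tau=0$ are inherited from the boundedness and continuity of the Gaussian semigroup on $L^2(m)$ through the change of variable $z=e^{\tau/2}\eta$ in \eqref{semi_2}, combined with dominated convergence.

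The only delicate point is the interplay of the polynomial weight $(1+|\eta|^2)^m$ with the outward scaling $\eta\mapsto e^{\tau/2}\eta$, since this dilation could a priori magnify the weighted norm. After the substitution the relevant quantity becomes
$$\|e^{\tau\cl_\eta}f\|_{L^2(m)}^2\ =\ e^{\tau(1-\frac{n-1}{2})}\!\int_{\rne}(1+e^{-\tau}|z|^2)^m\bigl|(e^{a(\tau)\De}f)(z)\bigr|^2\,dz,$$
and one must verify that the Gaussian smoothing disturbs the weighted norm by at most a polynomial factor in $\tau$, so that the residual exponential in $\tau$ only determines the growth bound of the semigroup without obstructing strong continuity. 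This weight bookkeeping is the main technical obstacle; it is carried out in Theorem~A.1 of \cite{GW}, which can alternatively be invoked directly for the semigroup generation, leaving us only to verify the explicit formulas \eqref{semi_1}--\eqref{semi_2} by the chain rule (or Fourier computation) described above.
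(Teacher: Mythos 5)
Your approach is correct but genuinely different from the paper's: the paper does not prove the formulas at all, it simply cites statement 4 of Theorem A.1 in \cite{GW} and records the normalization changes (the operator there is $\cl_\eta+\tfrac{n-2}{2}$ acting on $\R^{n-1}$). You instead derive \eqref{semi_1}--\eqref{semi_2} from scratch, either by undoing the scaling-variables change via the conjugation $W(\eta,\tau)=e^{\tau/2}w(e^{\tau/2}\eta,\,e^{\tau}-1)$ with $w$ a heat-equation solution, or by solving the Fourier-side transport equation
$\widehat V_\tau+\tfrac12\xi\cdot\nabla_\xi\widehat V=-(|\xi|^2+\tfrac{n-2}{2})\widehat V$
along the characteristics $\xi(\tau)=e^{\tau/2}\xi_0$; both computations are correct, and the semigroup law then falls out of $a(\tau+s)=a(\tau)+e^{-\tau}a(s)$. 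This buys you a self-contained, line-by-line verifiable argument where the paper offers only a citation, at the cost of carrying out the change-of-variables bookkeeping yourself. Two small remarks. First, your weighted-norm worry is milder than you suggest: since $(1+e^{-\tau}|z|^2)^m\leq(1+|z|^2)^m$ for $\tau\geq 0$, the dilation only shrinks the weight, so the residual factor $e^{\tau(1-\frac{n-1}{2})}$ merely sets the growth bound of the semigroup and causes no trouble for boundedness or strong continuity at $\tau=0$. Second, if you actually carry your ``matching constants'' step to completion you will not recover \eqref{semi_2} as printed: inverting \eqref{semi_1} gives the heat kernel $e^{-|\eta-\eta'|^2/(4a(\tau))}$ in the integrand, whereas the printed $G\bigl((\eta-\eta')/(2a(\tau)^{1/2})\bigr)$ equals $(4\pi)^{-(n-1)/2}e^{-|\eta-\eta'|^2/(16a(\tau))}$ given the stated normalization $G(\eta)=(4\pi)^{-(n-1)/2}e^{-|\eta|^2/4}$. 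Your derivation would therefore detect a normalization slip in \eqref{semi_2} rather than reproduce it; the correct kernel, consistent with both \eqref{semi_1} and \cite{GW}, is $e^{-|\eta-\eta'|^2/(4a(\tau))}$.
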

 The semigroup formulas \eqref{semi_1} and \eqref{semi_2} are also taken from \cite{GW} (see statement $4$, Theorem A.1), with the readjustments due to the different constant and the fact that $L_\eta$ acts on $n-1$ variables.

 Finally, we state some estimates about the action of the semigroup $e^{\tau \cl_\eta}$ on $L^2(m)(\mathbb{R}^{n- 1})$. A version of these are in fact needed for the determination of the spectrum $\si(\cl_\eta)$, but they have already been proved in Proposition A.2, \cite{GW}.  Even though these are well-known, we state them explicitly and provide some calculations for them, as our normalizations are slightly different than \cite{GW}, which may create an element of confusion. 
\subsection{Spectral projections and estimates for $e^{\tau \cl_\eta}$ on $L^2(m)$} 
Fix $m>\f{n}{2}+1$. The spectral projections corresponding to the eigenspaces of $\cl_\eta$ can be constructed explicitly, \cite{GW},  but we will not do so here. Instead, we just construct the one corresponding to the first eigenvalue $\la_0(\cl_\eta)=-\f{n-2}{2}$.  Recall that its eigenspace is one dimensional, spanned by $G$. 
Accordingly, we shall need an eigenvector $e_*$ for the adjoint operator, so that $\cl^* e_*=-\f{n-2}{2} e_*$.  But since 
$$
\cl^{*}_{\eta}= \De_{\eta}- \f{1}{2} \eta \cdot \nabla_{\eta}- \f{n-2}{2}.
$$
So, it is easy to see that $e_*=1$ is an eigenfunction\footnote{belonging to the dual space $L^2(-m)(\rne)$} for $\cl_\eta^*$ and since our normalization for $G$ is chosen so that   $\dpr{1}{G}=(4\pi)^{-\f{n-1}{2}} \int_{\rne} e^{-\f{|\eta|^2}{4}} d\eta=1$, it holds that $e_*=1$.  Thus, we have the convenient formula 
$$
\cp_0 f(\eta)= \left(\int_{\rne} f(\eta') d\eta'\right) G(\eta)=\dpr{f}{1}_\eta G(\eta)
$$
and $\cq_0=Id-\cp_0$. 
 
 \begin{proposition}
 	\label{prop:14}
   Let $m>\f{n+1}{2}$. Then, for all $\al\in {\mathbb N^{n-1}}$,  there exists $C_\al> 0$ such that		\begin{equation}  
 		\label{1_9}
 		\| \nabla^{\al} ( e^{\tau \cl_{\eta}} \mathcal{Q}_0 f)\|_{L^2(m)(\rne)} \leq C_\al  
 		\f{e^{- \f{n- 1}{2}  \tau}}{a(\tau)^{\f{|\al|}{2}}}  \|f\|_{L^2(m)(\rne)},
 		\end{equation}
 		for all $f \in L^2(m)$ and all $\tau> 0$.
 \end{proposition}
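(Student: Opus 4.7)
The plan is to combine the explicit semigroup formula \eqref{semi_2} with the cancellation provided by the projection $\cq_0$. Since $\cp_0 f = \dpr{f}{1}_\eta G$ and $\int_{\rne} G\,d\eta = 1$, the projected data $\cq_0 f$ has zero mean on $\rne$. The natural decay produced by \eqref{semi_1} is only $e^{-\f{n-2}{2}\tau}$, corresponding to the top eigenvalue $\la_0 = -\f{n-2}{2}$ of $\cl_\eta$; the extra factor $e^{-\f{\tau}{2}}$ in the claimed bound is precisely what the zero-mean cancellation must supply, consistent with the spectral gap to the next eigenvalue $-\f{n-1}{2}$ guaranteed by Proposition \ref{prp:01}.

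After the change of variable $\zeta = e^{\tau/2}\eta'$ in \eqref{semi_2}, the semigroup action takes the form
\begin{equation*}
(e^{\tau\cl_\eta} g)(\eta) = \f{e^{-\f{n-2}{2}\tau}}{(4\pi a(\tau))^{\f{n-1}{2}}} \int_{\rne} G\!\left(\f{\eta - e^{-\tau/2}\zeta}{2\sqrt{a(\tau)}}\right) g(\zeta)\,d\zeta.
\end{equation*}
Applying $\nabla^\al_\eta$ transfers the derivatives onto $G$ and produces the factor $(2\sqrt{a(\tau)})^{-|\al|}$, accounting for $a(\tau)^{-|\al|/2}$ in the target bound. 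For $g = \cq_0 f$, the zero-mean property lets me subtract the kernel evaluated at $\zeta = 0$; invoking the fundamental theorem of calculus on the resulting difference introduces an additional factor $e^{-\tau/2}$ (from the shift $e^{-\tau/2}\zeta$) together with an extra derivative of $G$. I would then bound the resulting convolution-type integral in $L^2(m)$ using Peetre's inequality $(1+|\eta|^2)^m \leq C_m(1+|\eta - e^{-\tau/2}\zeta|^2)^m(1+|\zeta|^2)^m$ (valid because $e^{-\tau/2}\leq 1$) to distribute the weight between the kernel and the data, followed by Minkowski's integral inequality and the fact that $(1+|w|^2)^m \nabla^\be G(w)$ is integrable for every multi-index $\be$.

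The main obstacle I anticipate is the interplay between the extra FTC factor and the $a(\tau)$ singularities near $\tau=0$. The FTC step technically produces an additional $1/\sqrt{a(\tau)}$ that would worsen the short-time behavior of the bound; the resolution is to split the argument into two regimes. For $\tau\geq 1$, $a(\tau)$ is bounded below, so the extra $1/\sqrt{a(\tau)}$ is harmless and the cancellation cleanly yields the spectral gap rate $e^{-(n-1)\tau/2}$. For $\tau\leq 1$, one bypasses the cancellation entirely, since $e^{-(n-1)\tau/2}\simeq e^{-(n-2)\tau/2}\simeq 1$ and the bound $a(\tau)^{-|\al|/2}\sim\tau^{-|\al|/2}$ comes directly from $\nabla^\al$ acting on the explicit Gaussian kernel. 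This two-regime split is also where the hypothesis $m > \f{n+1}{2}$ becomes natural: it ensures both that $\cq_0 f$ is well-defined with zero mean (via $L^2(m)\subset L^1(\rne)$) and that the first-moment factor $|\zeta|(\cq_0 f)(\zeta)$ arising from the FTC step lies in $L^2(m-1)$ with norm controlled by $\|f\|_{L^2(m)}$.
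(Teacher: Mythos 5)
The paper itself does not prove Proposition \ref{prop:14}; it simply cites Proposition A.2 of Gallay--Wayne \cite{GW} and adjusts constants. You therefore provide a genuinely independent attempt, and the conceptual skeleton is sound: use the kernel form \eqref{semi_2}, exploit $\int_{\rne}\cq_0 f = 0$ via subtraction of the kernel at $\zeta=0$ and the fundamental theorem of calculus, and split into the regimes $\tau<1$ (where no cancellation is needed since $a(\tau)\sim\tau$ already produces the singular factor and $e^{-(n-1)\tau/2}\sim 1$) and $\tau\geq 1$ (where $a(\tau)$ is bounded below). The observation that $m>\frac{n+1}{2}$ is exactly what makes the first moment $\int|\zeta|\,|\cq_0 f(\zeta)|\,d\zeta\lesssim\|f\|_{L^2(m)}$ finite is also correct. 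All of this matches the spirit of \cite{GW}.

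The gap is in the final closing step, ``Peetre's inequality $\dots$ followed by Minkowski's integral inequality.'' Try this on even the \emph{unprojected} semigroup: after Peetre, the weight deposited on the data is $(1+|\eta'|^2)^{m/2}$ on the $\eta'$-variable of \eqref{semi_2}, and Minkowski (or Young $L^1\times L^2\to L^2$) then requires you to control a quantity like
\begin{equation*}
\int_{\rne}\bigl(1+e^{-\tau}|\zeta|^2\bigr)^{m/2}\,|f(\zeta)|\,d\zeta,
\end{equation*}
or, in the $L^2$-dual formulation, $\bigl\|(1+e^{-\tau}|\zeta|^2)^{m/2}(1+|\zeta|^2)^{-m/2}\bigr\|_{L^2(\rne)}$. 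This $L^2$ norm \emph{diverges}: the integrand tends to the nonzero constant $e^{-m\tau}$ as $|\zeta|\to\infty$. In fact one can exhibit explicit $f\in L^2(m)$ (e.g.\ $(1+|\zeta|^2)^{-m/2}\mathds{1}_{|\zeta|<N}$ with $N$ large) for which the ratio of that integral to $\|f\|_{L^2(m)}$ is unbounded. So the Peetre--Minkowski route does not close --- and this is already a problem before the FTC introduces the extra $|\zeta|$. Once the FTC factor is included, matters only get worse: after Peetre the data carries $|\zeta|(1+|\zeta|^2)^{m/2}|\cq_0 f|$, which is a member of $L^2(m+1)$-type control, not $L^2(m)$. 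Your observation that $|\zeta|\cq_0 f\in L^2(m-1)$ is true but does not rescue the estimate, because Peetre deposits the full $m$-weight on the data, leaving nothing over.

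The fix requires genuinely exploiting the $\tau$-dependent concentration $f(e^{\tau/2}\eta')$ in the kernel formula rather than brute-forcing Peetre. On the Fourier side (via \eqref{semi_1}, with $L^2(m)\simeq H^m$ for $\hat f$) the Gaussian factor $e^{-a(\tau)|\xi|^2}$ cuts the integration to $|\xi|\lesssim 1$, i.e.\ $|q|=e^{-\tau/2}|\xi|\lesssim e^{-\tau/2}$, and on that small ball $\widehat{\cq_0 f}(q)=O(|q|\,\|\nabla\widehat{\cq_0 f}\|_{L^\infty})$ supplies the extra $e^{-\tau/2}$; the volume factor $e^{(n-1)\tau/4}$ from the substitution is balanced by the $L^2$-mass of the Gaussian. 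A real-space version is possible but requires a dyadic split in $|\zeta|$ versus $e^{\tau/2}$ rather than a single Cauchy--Schwarz. As written, your proposal is missing this ingredient and would not yield \eqref{1_9}.
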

 {\bf Remark:} The appearance of the factors $a(\tau)^{\f{|\al|}{2}}$ in the denominator makes the control of second and higher order derivatives, such as  $\nabla^2_\eta \Ga, \nabla^2_\eta V$,  problematic. The reason is that for $0<\tau<1$, $a(\tau)\sim \tau$ and we need an integrable in $\tau$ functions sitting on the right-hand side of \eqref{1_9}. 
 \begin{proof}
 This proposition is proved in \cite{GW}, see Proposition A.2, we have just made the adjustments for the constants and the dimension of the space. Note that the exponent $\f{n-1}{2}$ on the right hand side of the estimate is consistent with the assertion that $\si(\cl_\eta \cq_0)\subset \{\Re\la\leq -\f{n-1}{2}\}$.  
 
 We just copy estimate $(92)$ from Proposition A.2 in \cite{GW},  and we take into account that $\cl_\eta=L-\f{n-2}{2}$, where the operator $L$ is the semigroup generator in \cite{GW}. Thus, we obtain \eqref{1_9}. 
 \end{proof}
  Finally, we need an estimate of the following type. 
 \begin{proposition}
 	\label{prop:16}
 	Let      $m> \f{n}{2}$ and $a\in {\mathbb N}$.  Then, 
 	\begin{equation}
 	\label{20_51} 
 	 \|  \nabla^a  e^{\tau \cl_{\eta}} f\|_{L^{\infty}(\R^{n- 1})} \leq  C \f{e^{- \f{n-2}{2} \tau}}{a(\tau)^{\f{a}{2}}} \bigg( \|f\|_{L^{\infty}(\R^{n- 1})}+ \|f\|_{L^2(m)((\R^{n- 1}))}\bigg).
 	\end{equation}
 	We get the following improvement, when the semigroup is acting on the co-dimension one subspace  $\cq_0[L^2(m)]$ and $m>\f{n}{2}+1$, 
 	\begin{equation}
 	\label{20_55} 
 	\|  \nabla^a  e^{\tau \cl_{\eta}} \cq_0 f\|_{L^{\infty}(\R^{n- 1})} \leq  C \f{e^{- \f{n-1}{2} \tau}}{a(\tau)^{\f{a}{2}}} \bigg( \|f\|_{L^{\infty}(\R^{n- 1})}+ \|f\|_{L^2(m)((\R^{n- 1}))}\bigg).
 	\end{equation}
 	
 \end{proposition}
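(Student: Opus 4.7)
The plan is to split the proof into a short-time regime $\tau\le 1$ (handled by the explicit kernel formula \eqref{semi_2}) and a long-time regime $\tau\ge 1$, where $a(\tau)\asymp 1$ and the weighted-$L^2$ spectral information from Proposition~\ref{prop:14} takes over. For $\tau\le 1$, I differentiate \eqref{semi_2} under the integral and change variables $u=(\eta-e^{-\tau/2}\xi)/(2\sqrt{a(\tau)})$. The Jacobian $(2\sqrt{a(\tau)})^{n-1}e^{(n-1)\tau/2}$ exactly cancels the prefactor $e^{-(n-2)\tau/2}/a(\tau)^{(n-1)/2}$ and gives $|\nabla^a(e^{\tau\cl_\eta}f)(\eta)|\le C\,a(\tau)^{-a/2}\|\nabla^a G\|_{L^1}\|f\|_{L^\infty}$. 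Since $e^{-(n-2)\tau/2}$ is bounded away from zero on $[0,1]$, this yields \eqref{20_51} on the short-time interval with only the $\|f\|_{L^\infty}$ piece engaged.

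For $\tau\ge 1$, I split the kernel integral according to $|\xi|\le 1$ versus $|\xi|>1$. On $|\xi|\le 1$ the substitution confines $u$ to a ball of volume $\sim e^{-(n-1)\tau/2}$, which cancels the Jacobian and leaves the clean bound $Ce^{-(n-2)\tau/2}\|f\|_{L^\infty}$. On $|\xi|>1$ I apply Cauchy--Schwarz against the weight $(1+|\xi|^2)^{-m/2}$, and use $(1+|\xi|^2)^{-m}\le 2^m|\xi|^{-2m}=2^m e^{-m\tau}|\eta-2\sqrt{a(\tau)}\,u|^{-2m}$, which reduces the main estimate to computing $\int_{|v|>e^{-\tau/2}}|v|^{-2m}dv\sim e^{\tau(2m-n+1)/2}$ after the change $v=2\sqrt{a(\tau)}\,u-\eta$. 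The integral converges at infinity precisely because $m>n/2>(n-1)/2$. The main technical obstacle and only delicate step is that the four exponential factors---$e^{-(n-2)\tau/2}$ from the prefactor, $e^{(n-1)\tau/2}$ from the Jacobian, $e^{-m\tau}$ from the weight, and $e^{\tau(2m-n+1)/2}$ from the $v$-integral---must cancel \emph{exactly} to produce a bound independent of $\tau$ (hence proportional only to the desired $e^{-(n-2)\tau/2}$ coming from the prefactor); a direct computation shows these exponents sum to zero.

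For the sharpened bound \eqref{20_55}, I exploit that $\cq_0$ kills the eigenmode $G$ corresponding to the slowest-decaying eigenvalue $-(n-2)/2$. For $\tau\le 1$, the short-time kernel estimate applied to $\cq_0 f$ together with $\|\cq_0 f\|_{L^\infty}\le \|f\|_{L^\infty}+|\dpr{f}{1}|\,\|G\|_{L^\infty}\le \|f\|_{L^\infty}+C\|f\|_{L^2(m)}$ (where the last inequality uses $m>(n-1)/2$) suffices, since $e^{-(n-1)\tau/2}$ is bounded below on $[0,1]$. For $\tau\ge 1$, the hypothesis $m>n/2+1$ is exactly what is needed to invoke Proposition~\ref{prop:14} (which demands $m>(n+1)/2$), giving $\|\nabla^{a+\beta}e^{\tau\cl_\eta}\cq_0 f\|_{L^2(m)}\le Ce^{-(n-1)\tau/2}\|f\|_{L^2(m)}$ uniformly for all multi-indices $|\beta|\le k$ with any fixed $k>(n-1)/2$. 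Sobolev embedding $H^k(\rne)\hookrightarrow L^\infty(\rne)$, combined with $\|g\|_{L^2}\le \|g\|_{L^2(m)}$, then transfers the weighted $L^2$ decay to the pointwise bound and yields the improved rate $e^{-(n-1)\tau/2}$ claimed in \eqref{20_55}.
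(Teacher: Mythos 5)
Your proof is correct, and it follows the paper's overall strategy of splitting into $\tau\le 1$ (handled by the physical-space kernel formula) and $\tau\ge 1$, but the treatment of the long-time regime is genuinely different in both parts of the proposition. For \eqref{20_51} with $\tau\ge 1$, you stay in physical space: split the integration variable into $|\zeta|\le 1$ and $|\zeta|>1$, bound the first directly, and Cauchy--Schwarz the second against the polynomial weight. The paper instead passes to the Fourier side via \eqref{semi_1}, bounds $|\widehat f(q)|\le\|f\|_{L^1}\lesssim\|f\|_{L^2(m)}$, and controls the resulting integral by a dyadic shell decomposition in $|\xi|$. Both routes use $m>n/2$ essentially to make the relevant integrals converge; your Cauchy--Schwarz argument actually needs only $m>(n-1)/2$, so it is marginally more flexible. (Note, however, that your substitution $v=2\sqrt{a(\tau)}\,u-\eta$ is unnecessary: $\int_{|\zeta|>1}|\zeta|^{-2m}\,d\zeta$ is already $\tau$-independent without any change of variables, and your bookkeeping sentence about ``exponents summing to zero'' is imprecise as stated -- the four listed exponents sum to $-(n-2)/2$, which is fine, but what you intend is that the Jacobian, weight, and $v$-integral exponents cancel, leaving only the prefactor.) For \eqref{20_55}, your route differs more substantially: you bootstrap from Proposition \ref{prop:14} together with the Sobolev embedding $H^{k}(\R^{n-1})\hookrightarrow L^\infty$ and the trivial inequality $\|g\|_{L^2}\le\|g\|_{L^2(m)}$, which is clean and entirely avoids the paper's Fourier-side vanishing-moment argument ($\widehat{\cq_0 f}(0)=0$ buys a factor of $|q|$, hence $e^{-\tau/2}$). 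Your approach here is arguably more conceptual -- it shows that the $L^\infty$ bound is already ``in'' the weighted-$L^2$ estimates once one allows a few extra derivatives -- whereas the paper's is more self-contained and also explains directly where the extra decay comes from spectrally. The factor $a(\tau)^{-a/2}$ you lose through Sobolev embedding is harmless since $a(\tau)\ge 1/2$ on $\tau\ge 1$. Overall the proposal is sound, just with a somewhat circuitous intermediate computation for $|\zeta|>1$.
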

  \begin{proof}
  	We divide the proof into the cases of $\tau < 1$ and $\tau \geq 1$. 
  	For $\tau< 1$ we use the definition \eqref{semi_2} in our calculations. Indeed,
  	\begin{eqnarray*}
  		&&\| \nabla^a e^{\tau \cl_{\eta}}  f\|_{L^{\infty}} \leq C \f{e^{\f{\tau}{2}}}{(a(\tau))^{\f{n+a- 1}{2}}} \|\int_{\R^{n- 1}}  \nabla^a G(\f{\eta- \eta'}{(a(\tau))^{\f{1}{2}}}) f(e^{\f{\tau}{2}} \eta') d\eta' \|_{L^{\infty}} \\
  		& \leq& C \f{  \| \nabla^a G(\f{ \cdot}{(a(\tau))^{\f{1}{2}}})\|_{L^1(\R^{n- 1})} \| f(e^{\f{\tau}{2}} \cdot)\|_{L^{\infty}(\R^{n- 1})}}{(a(\tau))^{\f{n+ a- 1}{2}}}   \leq \f{C \| \nabla^a  G\|_{L^1(\R^{n- 1})} \| f\|_{L^{\infty}(\R^{n- 1})}}{(a(\tau))^{\f{a}{2}}}  \leq \f{C \| f\|_{L^{\infty}(\R^{n- 1})}}{(a(\tau))^{\f{a}{2}}}.
  	\end{eqnarray*}
  	Since for $\tau< 1$, $e^{ \f{n-2}{2} \tau}$ is bounded, we have 
  	\begin{eqnarray}\label{t_1}
  	&&\| e^{\tau \cl_{\eta}} \nabla^a f\|_{L^{\infty}} \leq \f{C e^{- \f{n-2}{2} \tau} }{(a(\tau))^{\f{a}{2}}} \| f\|_{L^{\infty}(\R^{n- 1})}. 
  	\end{eqnarray}  	
  	We  now turn our attention to the case $\tau \geq 1$. We have, 
  	\begin{eqnarray*}
  		&&\|  \nabla^a e^{\tau \cl_{\eta}}   f\|_{L^{\infty}} \leq C e^{- \f{n- 2}{2} \tau} \| e^{- a(\tau) |\cdot|^2} | |\cdot|^a \widehat{f}(e^{-\f{\tau}{2}} \cdot) \|_{L^1}=  C e^{- \f{n- 2}{2} \tau} \int_{\R^{n-1}}  e^{- a(\tau) |\xi|^2} | \xi|^a |\widehat{f}(e^{-\f{\tau}{2}} \xi)|   d\xi\\
  		&&= e^{- \f{n- 2}{2} 
  			\tau} e^{\f{(n+a-1)\tau}{2}} \int_{\R^{n-1}}  e^{- a(\tau) |e^{\f{\tau}{2}} q|^2} |q|^a |\widehat{f}(q)|  d q\\	
  		&& \leq C e^{\f{a+1}{2}\tau} \bigg[\int_{a(\tau) |e^{\f{\tau}{2}} q|^2 \leq 1}+ \sum_{i= 1}^{\infty} \int_{i \leq a(\tau) |e^{\f{\tau}{2}} q|^2 \leq i+ 1} \bigg] \bigg(e^{- a(\tau) |e^{\f{\tau}{2}} q|^2} |q|^a
  		|\widehat{f}(q)|  \bigg) d q:= J_1+ J_2.
  	\end{eqnarray*}
  	Since $ |\hat{f}(q)|  \leq  \|f\|_{L^1}\leq C\|f\|_{L^2(m)}$, because $m>\f{n}{2}$, we have 
  	\begin{eqnarray*}
  		&&e^{-\f{a+1}{2}\tau} J_1 \leq   \int_{a(\tau) |e^{\f{\tau}{2}} q|^2 \leq 1}  e^{- a(\tau) |e^{\f{\tau}{2}} q|^2} |q|^j |\widehat{f}(q)|   d q \leq   \|f\|_{L^2(m)} \int_{a(\tau)|e^{\f{\tau}{2}} q|^2 \leq 1}  |q|^a d q\\
  		&&\ \leq C  \|f\|_{L^2(m)} \int_{0}^{\f{e^{-\f{\tau}{2}}}{a(\tau)^{\f{1}{2}}}}  r^{a+  n- 2} d r \leq  C \f{e^{-\f{(a+n-1)\tau}{2}} }{a(\tau)^{\f{a+n-1}{2}}} \|f\|_{L^2(m)}\leq C e^{-\f{(a+n-1)\tau}{2}} \|f\|_{L^2(m)},
  	\end{eqnarray*}
  	since for $\tau>1$, $a(\tau)>\f{1}{2}$. 
  	In other words,
  $$
  J_1   \leq  C  e^{-\f{(n-2)}{2}\tau}  \|f\|_{L^2(m)}.
 $$
  	For $J_2$ in a similar way, we have
  	\begin{eqnarray*}
  		&&e^{-\f{a+1}{2}\tau} J_2 \leq  \|f\|_{L^2(m)} \sum_{i= 1}^{\infty} \int_{i \leq a(\tau) |e^{\f{\tau}{2}} q|^2 \leq i+ 1}  e^{- a(\tau) |e^{\f{\tau}{2}} q|^2} |q|^a    d q\\
  		&&
  		\leq  C  \|f\|_{L^2(m)} \sum_{i= 1}^{\infty} e^{- i} \int_{i  \f{e^{- \f{\tau}{2}}}{a(\tau)^{\f{1}{2}}} }^{{(i+ 1)  \f{e^{- \f{\tau}{2}}}{a(\tau)^{\f{1}{2}}} }}  r^{a+ n- 2} d r \leq  C \|f\|_{L^2(m)} 
  		e^{- \f{a+ n-1}{2} \tau}   \sum_{i= 1}^{\infty} e^{- i} \bigg((i+1)^{a+n-1}- i^{a+n-1}\bigg)
  		\\
  		&&\  \leq C \|f\|_{L^2(m)} e^{- \f{a+n-1}{2} \tau}.
  	\end{eqnarray*}
  	In other words, 
  	$$
  	  	J_2   \leq  C   e^{-\f{(n-2)}{2}\tau}  \|f\|_{L^2(m)}. 
  	  	$$
  	Therefore for $\tau> 1$ if we put both estimates for $J_1$ and $J_2$ together we get 
  	\begin{eqnarray}\label{t_2}
  	&&\|\nabla^a  e^{\tau \cl_{\eta}}  f\|_{L^{\infty}} \leq
  	C  e^{- \f{ n - 2}{2} \tau}   \|f\|_{L^2(m)}. 
  	\end{eqnarray}
   The proof of \eqref{20_51} is now is complete by putting  the estimates \eqref{t_1} and \eqref{t_2} together.
   For the estimate \eqref{20_55}, we use that $\cq_0 f=f-\dpr{f}{1}_\eta G$, so that  
   $\dpr{\cq_0 f}{1}_\eta=\dpr{f}{1}_\eta- \dpr{f}{1}_\eta \dpr{G}{1}_\eta=0$. So, 
   $ \widehat{\cq_0 f}(0)=0$. Thus, in the estimates above, we can estimate 
   $$
   | \widehat{\cq_0 f}(q)|=| \widehat{\cq_0 f}(q)-\widehat{\cq_0 f}(0)|\leq |q| 
   \|\nabla \widehat{\cq_0 f}\|_{L^\infty} \leq    C |q| \int_{\rne} |\eta| |\cq_0 f(\eta)| d\eta \leq C |q| 
   \|\cq_0 f\|_{L^2(m)},
   $$
   where in the last inequality, we needed $m>\f{n}{2}+1$. 
   In addition, 
   $$
     \|\cq_0 f\|_{L^2(m)}\leq \|f\|_{L^2(m)}+ |\dpr{f}{1}_\eta| \|G\|_{L^2(m)} \leq C  \|f\|_{L^2(m)}. 
   $$ 
   Plugging these estimates in the argument above, we gain a power of $|q|$, which gains   an extra power of $e^{-\f{\tau}{2}}$ over the estimate \eqref{20_51}, which is reflected on the right-hand side of \eqref{20_55}.    
  \end{proof}

  \section{Long time asymptotics - setup and further reductions}
  \label{sec:3} 
  In this section, we study the precise asympotics of the radiation term $V$ and the phase $\Ga$. 
  \subsection{Decomposing the evolution along the spectrum of $\cl_\eta$} Due to the fairly explicit  spectral information available about $\cl_\eta$, see Proposition \ref{prp:01}, and the semigroup estimates in Propositions \ref{prop:14} and \ref{prop:16}, it is beneficial to consider the system   \eqref{110}, \eqref{120} in $L^2(m)$ based spaces. For the estimates to work, we need to take $m$ to be large enough, say $m> \f{n+1}{2}$.   In this space, the operator $\cl_{\eta}$ has at least one isolated eigenvalue $\la_0= - \f{n- 2}{2}$ corresponding  to the eigenfunction 
  $G(\eta)=  (4\pi)^{-\f{n-1}{2}}  e^{- \f{|\eta|^2}{4}}$, recall $\eta \in \mathbb{R}^{n-1}$.  
  
   For conciseness, we set $\widetilde{f}= \mathcal{Q}_0 f$, that is all functions with a tilde hereafter will denote functions in $\cq_0(L^2(m))$.   With this set up, we decompose the solutions of the system of equations \eqref{VV}  in the following way,
  
  \begin{eqnarray} \label{decomposit}
  \left\{ 
  \begin{array}{l}
  V(z, \eta, \tau)=\al(z, \tau) G(\eta)+ \widetilde{V}(z, \eta, \tau),\\
  \Ga(\eta, \tau)=\ga(\tau) G(\eta)+ \widetilde{\Ga}(\eta, \tau),
  \end{array}  
  \right.
  \end{eqnarray}
  where  $\al(z, \tau)= \dpr{V}{1}_\eta=\int_{\rne} V(z,\eta, \tau) d\eta$  and $\ga(\tau)= \dpr{\Ga}{1}_\eta=  \int_{\rne} \Ga(\eta, \tau) d\eta$.  
  In order to  find the  representations of $\al$ and $\ga$ we make $\dpr{\cdot}{1}$ in \eqref{VV}, 
  \begin{eqnarray*} 
  	\left\{ 
  	\begin{array}{l}
  		\al_{\tau} = \langle V_{\tau}, 1\rangle_{\eta}= \langle (\mathcal{L}_{\eta}+ \f{1}{2}) V, 1 \rangle_{\eta}+ e^{\tau} \langle L_1 V, 1 \rangle_{\eta}+ \langle Q_0 H(\Ga, V), 1 \rangle_{\eta}+ \langle Q_0 N_1 (\Ga, \nabla_{\eta} \Ga, V), 1 \rangle_{\eta}\\
  		\ga_{\tau}= \langle \Ga_{\tau}, 1 \rangle_{\eta}= \langle \mathcal{L}_{\eta} \Ga, 1 \rangle_{\eta}+  e^{-\f{\tau}{2}}  \langle N_2 (\Ga, \nabla_{\eta} \Ga, V),  1 \rangle_{\eta}.
  	\end{array}  
  	\right.
  \end{eqnarray*}
  Some of the terms in this system can be simplified as follows 
  \begin{eqnarray*} 
  	&&\langle (\mathcal{L}_{\eta}+ \f{1}{2}) V, 1 \rangle_{\eta}=  \langle \De V, 1 \rangle_{\eta}+ \f{1}{2} \langle \eta \cdot \nabla_{\eta} V, 1 \rangle_{\eta}+  \langle V, 1 \rangle_{\eta}= \f{1}{2} \int \eta \cdot \nabla V\ d\eta+ \langle V, 1 \rangle_{\eta}= - \f{n- 3}{2} \al(z, \tau),\\
  	&& \langle L_1 V, 1 \rangle_{\eta}= L_1 \al(z, \tau). 
  \end{eqnarray*}
  Therefore, we obtain the ODE/PDE system 
   \begin{equation}   
   \label{140} 
   \left\{ 
   \begin{array}{l}
   \al_{\tau}(z, \tau) = - \f{n- 3}{2}\ \al(z, \tau) + e^{\tau}  L_1 \al(z, \tau)+ \langle Q_0 H(\Ga, V), 1 \rangle_{\eta}+ \langle Q_0 N_1 (\Ga, \nabla_{\eta} \cdot   \Ga, V), 1 \rangle_{\eta}\\
   \ga_{\tau}= - \f{n- 2}{2}\ \ga(\tau)+  e^{-\f{\tau}{2}}  \langle N_2 (\Ga, \nabla_{\eta}\cdot  \Ga, V),  1 \rangle_{\eta}.
   \end{array}  
   \right.
   \end{equation}
  Recall now that by our construction in \eqref{vv}, we had $v=Q_0 v$ or equivalently $P_0 v=0$. Clearly, such a property transfers to the scaling variables\footnote{the operators $P_0, Q_0$ are  acting in the variable $z$, which is independent on the action in the scaled variable $\eta$}, that is $Q_0 V=V, P_0 V=0$. Consequently, 
  $$
  P_0 \al(\cdot, \tau)=P_0 \dpr{V(\cdot, \eta, \tau)}{1}_\eta = \dpr{P_0  V(\cdot, \eta, \tau)}{1}_\eta=0
  $$
  or equivalently $\al(z, \tau)=Q_0 \al(\cdot, \tau)$.   
  Thus, the system \eqref{140}, which consists of an ODE and a PDE, has the following integral representation, 
   \begin{eqnarray} 
   \label{200} 
   		\al(z, \tau) &=&  e^{- \f{n- 3}{2} \tau} e^{e^{\tau}  L_1} Q_0 \al(z,0) +  \\
   		\nonumber 
   		&+& \int_0^{\tau} e^{- \f{n- 3}{2} (\tau- s)} e^{(e^{\tau}- e^{s})  L_1} Q_0 \Bigg[ \langle  H(\Ga, V), 1 \rangle_{\eta}(s) + \langle   N_1 (\Ga, \nabla_{\eta} \cdot \Ga, V), 1 \rangle_{\eta}(s) \Bigg] ds,\\
   		\label{205} 
   		\ga(\tau) &=&  e^{- \f{n- 2}{2} \tau} \ga(0)+   \int_0^{\tau} e^{- \f{n- 2}{2}(\tau- s)} e^{-\f{s}{2}} \langle N_2 (\Ga, \nabla_{\eta} \cdot \Ga, V),  1 \rangle_{\eta}(s) ds.
   \end{eqnarray}
  We also can find the representation of $\widetilde{V}$ and $\widetilde{\Ga}$. For that, 	we  project the system of equations \eqref{VV} away from the eigenvector $G$. 
  That is, we apply $\cq_0$ in \eqref{VV}. Note that all operations in the $z$ variable commute with the operations in the $\eta$ variables, such as  $L_1 \cq_0=\cq_0 L_1, \cq_0 Q_0=Q_0 \cq_0$ and so on. We obtain  
  \begin{eqnarray*}
  		\widetilde{V}_{\tau} &=&   (\mathcal{L}_{\eta}+ \f{1}{2}) \widetilde{V}+ 
  		e^{\tau} L_1 \tilde{V} + Q_0[\mathcal{Q}_0 H(\Ga, V)+ \mathcal{Q}_0   N_1 (\Ga, \nabla_{\eta}\cdot  \Ga, V)], \\
  		\widetilde{\Ga}_{\tau} &=&  \mathcal{L}_{\eta} \widetilde{\Ga}+  e^{-\f{\tau}{2}}  \mathcal{Q}_0 N_2 (\Ga, \nabla_{\eta} \cdot \Ga, V). 
  \end{eqnarray*}
 Note that once again $\widetilde{V}(z, \eta, \tau)= Q_0 \widetilde{V}(z, \eta, \tau)$. The  system has the following integral representation,
  \begin{eqnarray} 
  \label{210}
  	  		\widetilde{V}(z, \eta, \tau) &=&  e^{(\cl_{\eta}+ \f{1}{2}) \tau}   e^{e^{\tau} L_1} Q_0 \widetilde{V}_0+  \\
  	  		\nonumber 
  	  		&+& \int_0^{\tau} e^{(\cl_{\eta}+ \f{1}{2}) (\tau- s)} \mathcal{Q}_0 e^{(e^{\tau}- e^s) L_1} Q_0 
  	  		\Big[  H(\Ga, V)(s)+   N_1 (\Ga, \nabla_{\eta}\cdot   \Ga, V)(s)\Big] ds\\
  	  		\label{215}
  		\widetilde{\Ga}(\eta, \tau) &=&  e^{\tau \cl_{\eta} }  \widetilde{\Ga}_0+  \int_0^{\tau} e^{\cl_{\eta} (\tau- s)}  \mathcal{Q}_0  e^{-\f{s}{2}}  N_2 (\Ga, \nabla_{\eta}\cdot   \Ga, V) (s) ds.
  \end{eqnarray}
  Thus, we have reduced matters to the system \eqref{200}, \eqref{205}, \eqref{210}, \eqref{215}. Our next goal is to show a small data, global regularity result for this system. 
  \subsection{The function space}
  We now introduce a function space $X$. Of course,  the time decay exponents are chosen appropriately so that the argument eventually closes. More specifically, 
  \begin{eqnarray*}
  	& &\|(\al, \be, \widetilde{V}, \widetilde{\Ga})\|_{X} := 
  	\sup_{\tau > 0} \left\{ e^{(n-\f{1}{2}) \tau} \|\al(\cdot, \tau)\|_{H^1_z}+ 
  	e^{\f{n- 2}{2} \tau} |\ga(\tau)|\right\}+ \\
  	&+& 	\sup_{\tau > 0} \left\{ e^{ (n-\f{1}{2}) \tau} 
  	\| \widetilde{V}\|_{L^2(m)H^1_z} +   	e^{(n-\f{1}{2}) \tau}   \| \widetilde{V}\|_{L^\infty_\eta H^1_z }   \right\} + \\
  	&+& 
  		\sup_{\tau > 0} \left\{ e^{\f{n- 1}{2} \tau} \| \widetilde{\Ga}\|_{H^1(m)}+   e^{\f{n- 1}{2} \tau} \| \widetilde{\Ga}\|_{L^{\infty}_{\eta}}+ e^{\f{n- 1}{2} \tau} \|\nabla_{\eta} \widetilde{\Ga}\|_{L^{\infty}_{\eta}}  \right\}. 
  \end{eqnarray*}
  Here, recall  the convention   $\|f\|_{L^\infty_\eta H^1_z}=\| \|f \|_{H^1_z} \|_{L^\infty_\eta}$. 
  \subsection{Asymptotics in the scaling variables system}
  The following is the main result, describing the asymptotics of the evolution in the scaling variables. 
  We just note that by the setup in the scaling variables, the initial data in the scaling variables coincides with the initial data in the original variables. 
  \begin{theorem}
  	\label{theo:20} 
  	There exists $\eps_0>0$ and a constant $C_0$, so that for every $\eps: 0<\eps<\eps_0$ and initial data $(\al_0, \ga_0, \widetilde{V}_0, \widetilde{\Ga}_0)=(\al, \ga, \widetilde{V}, \widetilde{\Ga})|_{\tau=0}$ satisfying 
  	\begin{equation}
  	\label{305}
    \|\al(\cdot, 0)\|_{H^1_z}+ 
  	  |\ga(0)| + 
  		\| \widetilde{V}_0\|_{H^1_z H^1(m)} +     \| \widetilde{V}_0\|_{H^1_z L^\infty_\eta}     +  
  		 \| \widetilde{\Ga}_0\|_{H^1(m)}+     \| \widetilde{\Ga}_0\|_{L^{\infty}_{\eta}}+   \|\nabla_{\eta} \widetilde{\Ga}_0\|_{L^{\infty}_{\eta}}   <\eps, 
  	\end{equation}
  	the system \eqref{200}, \eqref{205}, \eqref{210}, \eqref{215} has an unique solution in the ball $B_X(0, C_0 \eps)$, with the given initial data. That is,  it satisfies 
  	\begin{eqnarray}
  	\label{700} 
  	\|\al(\cdot, \tau)\|_{H^1_z}\leq C_0\eps e^{-(n-\f{1}{2}) \tau},  |\ga(\tau)|\leq C_0 \eps e^{-\f{n- 2}{2} \tau} \\
  	\label{710} 
  	\| \widetilde{V}(\tau, \cdot)\|_{L^2(m)H^1_z\cap L^\infty_\eta H^1_z } \leq C_0 \eps  e^{ -(n-\f{1}{2})  \tau} \\
  	\label{720} 
  	\| \widetilde{\Ga}(\tau, \cdot)\|_{H^1(m)\cap L^\infty_\eta}+
  	\|\nabla_{\eta} \widetilde{\Ga}(\tau, \cdot)\|_{L^{\infty}_{\eta}} \leq C_0\eps e^{-\f{n- 1}{2} \tau}. 
  	\end{eqnarray}
  	In particular, taking into account \eqref{decomposit}, 
  	\begin{eqnarray}
  	\label{718} 
  		& & 	\| V(\tau, \cdot)\|_{L^2(m)H^1_z\cap L^\infty_\eta H^1_z } \leq C_0 \eps  e^{ -(n-\f{1}{2})  \tau} \\
  		\label{719}  	& &  	\| \Ga(\tau, \cdot)\|_{H^1(m)\cap L^\infty_\eta}+
  		\|\nabla_{\eta} \Ga (\tau, \cdot)\|_{L^{\infty}_{\eta}} \leq C_0\eps e^{-\f{n- 2}{2} \tau}. 
  	\end{eqnarray}
  \end{theorem}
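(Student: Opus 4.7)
My plan is to set up a Banach fixed-point argument for the integral system \eqref{200}, \eqref{205}, \eqref{210}, \eqref{215} in the closed ball $B_X(0,C_0\eps)\subset X$. Let $\Phi$ denote the map whose four components are given by the right-hand sides of those equations, and show that for $C_0$ large enough and $\eps_0$ small enough, $\Phi$ sends $B_X(0,C_0\eps)$ into itself and is a strict contraction there. The Banach fixed point theorem then yields the unique solution and the quantitative bounds \eqref{700}--\eqref{720} simultaneously; the bounds \eqref{718}, \eqref{719} follow at once from the decomposition \eqref{decomposit}, the triangle inequality, and $G\in H^1(m)\cap L^\infty$, noting that in each case the slower rate dominates.

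\textbf{Linear (free) terms.} The free evolutions in \eqref{200} and \eqref{210} carry the factor $e^{e^\tau L_1}Q_0$. By Assumption \ref{assumption}, $L_1$ restricted to $Q_0[H^1_z]$ has spectrum in $\{\Re\la\le-\de\}$, so $\|e^{e^\tau L_1}Q_0 f\|_{H^1_z}\le Ce^{-\de e^\tau}\|f\|_{H^1_z}$, a super-exponential decay easily absorbed by the weight $e^{(n-1/2)\tau}$. Together with Proposition~\ref{prop:14} (and its $\al=0$ case) and the co-dimension-one bound \eqref{20_55} applied to $\widetilde{V}_0$, this handles the linear part of \eqref{200} and \eqref{210}. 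The free term in \eqref{205} is trivial. For the free term $e^{\tau\cl_\eta}\widetilde{\Ga}_0$ in \eqref{215} I use Proposition~\ref{prop:14} with $|\al|=0,1$ for the $H^1(m)$ estimate and \eqref{20_55} with $a=0,1$ for the $L^\infty_\eta$ and $\nabla L^\infty_\eta$ estimates; the singularity $a(\tau)^{-1/2}$ as $\tau\to 0^+$ in the gradient bound is harmless, since for $\tau\in(0,1]$ one may instead use the trivial bound $\|\nabla_\eta e^{\tau\cl_\eta}\widetilde{\Ga}_0\|_{L^\infty}\le C\|\nabla_\eta\widetilde{\Ga}_0\|_{L^\infty}$ supplied by the hypothesis \eqref{305}.

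\textbf{Nonlinear terms.} I would first establish pointwise-in-$\tau$ estimates on the scaling-variable nonlinearities $H,N_1,N_2$. Using $f\in C^4$, boundedness of $\phi,\phi',\phi''$, and the cubic remainder structure $e^{2\tau}E(e^{-\tau}V)=O(e^{-\tau}|V|^3)$, each source term is schematically of the form $D^2 f(\phi_{e^{-\tau/2}\Ga})V^2+(\nabla_\eta\cdot\Ga)^2\phi^{(k)}_{e^{-\tau/2}\Ga}+e^\tau(Df(\phi_{e^{-\tau/2}\Ga})-Df(\phi))V+e^{-\tau}|V|^3$. The dangerous $e^\tau$ prefactor on the third term is compensated by $|\phi_{e^{-\tau/2}\Ga}-\phi|\lesssim e^{-\tau/2}|\Ga|$, turning it into $e^{\tau/2}|\Ga||V|$. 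Inserting the $X$-bounds $\|V\|_{L^\infty_\eta H^1_z}\lesssim C_0\eps e^{-(n-1/2)\tau}$, $|\ga|\lesssim C_0\eps e^{-(n-2)\tau/2}$, $\|\nabla_\eta\widetilde\Ga\|_{L^\infty_\eta}\lesssim C_0\eps e^{-(n-1)\tau/2}$, each such term is $O(C_0^2\eps^2)$ times a rapidly decaying exponential in $s$. Convolving against the semigroup estimates of Propositions~\ref{prop:14}, \ref{prop:16} and the spectral-gap bound on $L_1$, the four Duhamel integrals are dominated by $CC_0^2\eps^2$ times the prescribed $X$-weights; picking $C_0$ large enough to absorb the linear part and $\eps_0$ with $CC_0\eps_0<\tfrac12$ yields the self-map property. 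Contraction follows by the same calculation applied to differences $(\al_1-\al_2,\ldots)$: the smoothness of $f$ makes $H,N_1,N_2$ locally Lipschitz on $B_X(0,C_0\eps)$ with Lipschitz constants $O(C_0\eps)$, producing a contraction factor $\lesssim C_0\eps$.

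\textbf{Main obstacle.} The delicate point is the $L^\infty_\eta$ bound on $\nabla_\eta\widetilde\Ga$: the Duhamel contribution involves $\nabla_\eta e^{(\tau-s)\cl_\eta}\cq_0 N_2$, which by \eqref{20_55} carries the $(\tau-s)$-singular factor $a(\tau-s)^{-1/2}$ as $s\to\tau^-$, together with the intrinsic $e^{-(n-1)(\tau-s)/2}$ decay and the $e^{-s/2}$ prefactor in \eqref{215}. One must verify that $\int_0^\tau a(\tau-s)^{-1/2}e^{-(n-1)(\tau-s)/2}e^{-s/2}\|N_2(s)\|\,ds$ decays at the required rate $e^{-(n-1)\tau/2}$; since $a(\tau-s)^{-1/2}$ is integrable near $s=\tau$ and the quadratic-in-$X$ structure of $N_2$ (specifically the $(\nabla_\eta\cdot\Ga)^2$ term, dominant in this estimate) supplies an integrand strictly better than $e^{-(n-1)s/2}$, the integral closes with the advertised exponent. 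The precise decay weights in the definition of $X$ have been engineered exactly so that this and the analogous time-exponent bookkeeping for the other three components all align; verifying these alignments is the main computational hurdle of the proof.
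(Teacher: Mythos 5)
Your proposal is correct and follows essentially the same route as the paper's Section 4: a Banach fixed-point argument in $B_X(0,C_0\eps)$, with the free solutions controlled by the spectral-gap decay of $e^{sL_1}Q_0$ (Lemma \ref{le:l1}) and the semigroup bounds of Propositions \ref{prop:14}--\ref{prop:16}, the Duhamel terms bounded via pointwise-in-$s$ estimates on $H$, $N_1$, $N_2$ (Lemma \ref{nonl}) fed into the time-integral bounds of Lemma \ref{ex0}, and the short-time treatment of $\nabla_\eta e^{\tau\cl_\eta}\widetilde\Gamma_0$ by the trivial $L^1$-convolution bound exactly as the paper does. The only place you are slightly cavalier is in asserting $\|e^{sL_1}Q_0 f\|_{H^1_z}\le Ce^{-\de s}\|f\|_{H^1_z}$ directly from the spectral inclusion: for a non-self-adjoint generator this requires either the analyticity of the semigroup or a Gearheart--Pr\"uss resolvent bound (the paper's Lemma \ref{le:l1} chooses the latter and only obtains the rate $e^{-\de_1 s}$ for any $\de_1<\de$), but this does not affect the argument.
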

  The proof of Theorem \ref{theo:20} occupies Section \ref{sec:4} below. 
   We only mention that as a consequence of it and  the relations \eqref{718}, \eqref{719}, we derive the asymptotics of the solutions $(v, \si)$ of the system \eqref{vv}.  More precisely, taking into account the scaling variables definition, we obtain 
   \begin{eqnarray*}
    & &  \|\si(t, \cdot)\|_{L^\infty_y}\leq C \eps_0 (1+t)^{-\f{n-1}{2}}, \ \     
    \|\nabla_y \si(t,\cdot)\|_{L^\infty_y}\leq C \eps_0 (1+t)^{-\f{n}{2}}\\ 
    & & \|v\|_{L^\infty_{\eta, z}}\leq C\eps_0 (1+t)^{-(n+\f{1}{2})} 
   \end{eqnarray*}
   These are precisely the claims in  \eqref{1.10}, \eqref{1.11} and \eqref{1.12}.

  \section{Long time asymptotics - Proof of Theorem \ref{theo:20}}
  \label{sec:4} 
  We perform a fixed point argument in a sufficiently small ball of $X$. To that end, we view the question for solvability  as a fixed point problem in  the schematic  form 
  $$
  (\al, \ga, \widetilde{V}, \widetilde{\Ga})=\textup{free solutions}+ \Phi(\al, \ga, \widetilde{V}, \widetilde{\Ga}), 
  $$
  where $\Phi$ is defined as the Duhamel terms in the right-hand sides of \eqref{200}, \eqref{205}, \eqref{210}, \eqref{215}.  The existence and uniqueness of the fixed point will be established, once we can show that there exists a sufficiently small $\eps>0$ and a $C$ (depending on parameters, but not on $\eps$), so that whenever initial data satisfies \eqref{305}, we have 
  \begin{itemize}
  	\item  \begin{equation}
  	\label{300} 
  	\|\textup{free solutions}\|_X\leq C \eps, 
  	\end{equation}
  	\item For all  $(\al, \ga, \widetilde{V}, \widetilde{\Ga})\in X: \|(\al, \ga, \widetilde{V}, \widetilde{\Ga})\|_{X}\leq \eps$, there is 
  	\begin{equation}
  	\label{310} 
  	\|\Phi(\al, \ga, \widetilde{V}, \widetilde{\Ga})\|_X\leq C \eps^2.
  	\end{equation}
  	\item For all $(\al_j, \ga_j, \widetilde{V}_j, \widetilde{\Ga}_j): 
  	\|(\al_j, \ga_j, \widetilde{V}_j, \widetilde{\Ga}_j)\|_{X}\leq \eps, j=1, 2$, there is 
  		\begin{equation}
  		\label{311} 
  		\|\Phi(\al_1, \ga_1, \widetilde{V}_1, \widetilde{\Ga}_1)-\Phi(\al_2, \ga_2, \widetilde{V}_2, \widetilde{\Ga}_2)\|_X\leq C \eps\|(\al_1, \ga_1, \widetilde{V}_1, \widetilde{\Ga}_1)- (\al_2, \ga_2, \widetilde{V}_2, \widetilde{\Ga}_2)\|_X. 
  		\end{equation}
  \end{itemize}
  Due to the multilinear structure of the functional $\Phi$, we can concentrate on \eqref{310}, identical approach will yield \eqref{311}. 
  We start with the free solutions, as these only involve the mapping properties of the semigroups $e^{\tau \cl_\eta}$ and $e^{s L_1}$. 
  \begin{lemma}
  	\label{le:l1}
  	The operator $L_1$ generates a semigroup on $H^1(\rone)$. In fact, under the Assumption \ref{assumption}, 
  	for all $\de_1<\de$, there is a constant $C=C_{\de_1}$, 
  	\begin{equation}
  	\label{20_5}
  	\|e^{s L_1} Q_0 f\|_{H^1(\rone)}\leq C_{\de_1}    e^{- \de_1 s} \|f\|_{H^1(\rone)}.
  	\end{equation}	
  	In the applications, we will use $\de_1:=\f{\de}{2}$. 
  \end{lemma}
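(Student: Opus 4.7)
The plan is to prove Lemma~\ref{le:l1} via two essentially independent steps: first, a generation-theoretic argument establishing that $L_1$ generates a strongly continuous (in fact analytic) semigroup on $H^1(\rone)$; second, a spectral-calculus argument that converts the gap in Assumption~\ref{assumption} into the exponential decay estimate \eqref{20_5} on the invariant subspace $Q_0[H^1(\rone)]$.

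For generation, I would split $L_1= \p_{zz} + c\,\p_z + Df(\phi)$ into its principal part $\p_{zz}$, the drift $c\,\p_z$, and the zeroth-order multiplication $Df(\phi)$. The Laplacian $\p_{zz}$ is well known to generate an analytic semigroup on $H^1(\rone)$ with domain $H^3(\rone)$. The drift $c\,\p_z$ is $\p_{zz}$-bounded with relative bound zero (by an elementary Fourier/interpolation estimate $\|\p_z u\|_{H^1}\leq \ve\|\p_{zz} u\|_{H^1}+C_\ve\|u\|_{H^1}$), and $Df(\phi)$ is a bounded multiplication on $H^1(\rone)$ because $\phi\in C^3_b$ thanks to the exponential convergence of $\phi$ to $\phi_\pm$ and the assumption $\phi'\in H^2$. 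Standard perturbation theory for sectorial operators (e.g.\ Kato) then gives that $L_1$ is sectorial on $H^1(\rone)$ with the same domain as $\p_{zz}$, hence generates an analytic $C_0$-semigroup $\{e^{sL_1}\}_{s\geq 0}$ on $H^1(\rone)$.

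For the decay, I would use the Riesz projection $P_0$ from \eqref{50}. Since $0$ is an isolated simple eigenvalue by Assumption~\ref{assumption}, both $P_0$ and $Q_0=\mathrm{Id}-P_0$ are bounded projections on $H^1(\rone)$, they commute with $L_1$ (hence with the semigroup), and the restriction $L_1\mid_{Q_0 H^1}$ has spectrum $\si(L_1)\setminus\{0\}\subset\{\la:\Re\la\leq-\de\}$. Because the semigroup is analytic, the spectral mapping theorem holds in the strong form $\si(e^{sL_1})\setminus\{0\}=e^{s\si(L_1)}$, so the growth bound of $e^{sL_1}Q_0$ equals the spectral bound $-\de$ of $L_1\mid_{Q_0 H^1}$. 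This yields the estimate \eqref{20_5}: for any $\de_1<\de$ there is $C_{\de_1}$ with $\|e^{sL_1}Q_0 f\|_{H^1}\leq C_{\de_1}e^{-\de_1 s}\|f\|_{H^1}$.

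The only mildly delicate point is ensuring that the spectral information in Assumption~\ref{assumption} (stated on $H^1(\rone)$) transfers consistently to the analytic semigroup framework on $H^1(\rone)$ used above; this is automatic because $L_1$ is uniformly elliptic with smooth bounded coefficients, so its $L^2$ and $H^1$ spectra coincide by elliptic regularity, and the Riesz projection defined by the contour integral in \eqref{50} is the same projection on either space. The main obstacle, if any, is verifying that the spectral mapping theorem applies in the form used—this is standard for analytic semigroups, but one must be careful to work on the Hilbert space $H^1(\rone)$ (rather than $L^\infty$-based spaces) so that Gearhart--Pr\"uss-type results are available as a backup if one wishes to avoid invoking analyticity.
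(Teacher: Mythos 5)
Your proof is correct, but it takes a genuinely different route from the paper's. The paper proves \eqref{20_5} via the Gearhart--Pr\"uss theorem: it observes that, granted the spectral gap \eqref{20}, it suffices to establish the uniform resolvent bound
\[
\sup_{\mu\in\rone}\|(L_1+\de_1+i\mu)^{-1}\|_{H^1\to H^1}<\infty,
\]
and then proves this by hand, through energy estimates: taking the $H^1$ inner product of the resolvent equation $f''+cf'+Wf+\de_1 f+i\mu f=g$ with $f$, separating real and imaginary parts, and using Cauchy--Schwarz to bootstrap $\|f\|$, $\|f'\|\lesssim\|g\|$ uniformly for large $|\mu|$. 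You instead establish that $L_1$ is sectorial on $H^1(\rone)$ (by splitting off $\p_{zz}$ and treating $c\p_z$ and $Df(\phi)$ as relatively bounded perturbations with relative bound zero) and then invoke the spectral mapping theorem for analytic semigroups, so that the growth bound of $e^{sL_1}Q_0$ is read off directly from the spectral bound of $L_1|_{Q_0H^1}\leq-\de$. Both routes are valid on the Hilbert space $H^1(\rone)$. What the paper's approach buys: it is self-contained and makes the quantitative dependence on $\de_1$ visible through explicit resolvent estimates, without appealing to sectoriality. What your approach buys: once analyticity is in place, the exponential estimate is immediate from abstract theory with no computation; moreover, it cleanly handles the first assertion of the lemma (that $L_1$ generates a semigroup at all), which the paper's appendix leaves implicit before it applies Gearhart--Pr\"uss. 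One small refinement: the transfer between $L^2$ and $H^1$ spectra that you discuss is not actually needed, since Assumption~\ref{assumption} already places the spectrum in $H^1(\rone)$, the space on which the whole argument is carried out.
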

  The proof of Lemma \ref{le:l1} involves the spectral gap property assumption. It is done by combining appropriate resolvent estimates and the Gearheart-Pr\"uss theorem, it   is postponed to the Appendix. 
  
   Using the positivity properties of the function $G$, we have the following 
   \begin{lemma}
   	\label{ex}
   	Let $1 \leq p \leq \infty$, then  there is the pointwise inequality 
   	\begin{equation}
   	\label{303} 
   	\| e^{\tau \cl_{\eta}} f(\cdot, \eta) \|_{L^p_z(\R)} \leq e^{\tau \cl_{\eta}}	\|  f(\cdot, \eta) \|_{L^p_z(\R)}
   	\end{equation}
   \end{lemma}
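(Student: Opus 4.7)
The proof is a direct application of Minkowski's integral inequality combined with the explicit positive-kernel representation of the semigroup $e^{\tau \cl_\eta}$.

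The plan is to start from the formula \eqref{semi_2} in Proposition \ref{prop:semi}, which, when applied pointwise in $z$ to a function $f(z,\eta)$, gives
\begin{equation*}
(e^{\tau \cl_{\eta}}f)(z,\eta)= \f{e^{\f{\tau}{2}}}{\bigl(4 \pi a(\tau) \bigr)^{\f{n- 1}{2}}} \int_{\rne} G\!\left(\f{\eta- \eta'}{2 a(\tau)^{\f{1}{2}}}\right) f(z, e^{\f{\tau}{2}}\eta' )\, d \eta'.
\end{equation*}
The key observation is that the kernel $G$ is non-negative (it is a Gaussian), so the right-hand side is an average of the values $f(z, e^{\tau/2}\eta')$ against a positive measure in $\eta'$.

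Next, I would take the $L^p_z(\R)$ norm of both sides and apply Minkowski's integral inequality (valid for all $1\le p\le\infty$) to pull the norm inside the $\eta'$ integral:
\begin{equation*}
\|(e^{\tau \cl_{\eta}}f)(\cdot,\eta)\|_{L^p_z} \leq \f{e^{\f{\tau}{2}}}{\bigl(4 \pi a(\tau) \bigr)^{\f{n- 1}{2}}} \int_{\rne} G\!\left(\f{\eta- \eta'}{2 a(\tau)^{\f{1}{2}}}\right) \|f(\cdot, e^{\f{\tau}{2}}\eta' )\|_{L^p_z}\, d \eta'.
\end{equation*}
Setting $g(\eta):=\|f(\cdot,\eta)\|_{L^p_z}$, the right-hand side is precisely $(e^{\tau\cl_\eta}g)(\eta)$, by the same formula \eqref{semi_2}. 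This yields the claimed pointwise inequality.

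There is no real obstacle here; the only point requiring care is the case $p=\infty$, for which Minkowski is simply the bound $|\int h(\eta')\, d\mu(\eta')|\leq \int |h(\eta')|\, d\mu(\eta')$ applied with $h(\eta')=f(z,e^{\tau/2}\eta')$ and with $z$ held fixed, followed by taking a supremum in $z$. Positivity of the kernel is essential: without it, the triangle inequality would not let us re-assemble the integral on the right as another application of the semigroup.
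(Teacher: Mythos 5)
Your proof is essentially the same as the paper's: both start from the representation formula \eqref{semi_2}, invoke the positivity of the Gaussian kernel $G$, and apply Minkowski's integral inequality to pull the $L^p_z$ norm inside the $\eta'$-integral, recognizing the result as the semigroup applied to $\|f(\cdot,\eta)\|_{L^p_z}$. The paper's one-line computation leaves Minkowski implicit, so your writeup is merely a more explicit version of the same argument.
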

   \begin{proof}
   	Based on the semigroup definition of \eqref{semi_2}, and considering the fact that $G(\cdot)$ is a positive function of the variable $\eta$,
   	\begin{eqnarray*}
   		\| e^{\tau \cl_{\eta}} f(\cdot, \eta) \|_{L^p_z(\R)}&=& \f{e^{\f{\tau}{2}}}{(4 \pi a(\tau))^{\f{n- 1}{2}}} \| \int_{\R^{n- 1}} G(\f{\eta- \eta'}{2(a(\tau))^{\f{1}{2}}}) f(\cdot,e^{\f{\tau}{2}} \eta') d\eta'\|_{L_z^p(\R)}\leq \\
   		&\leq & \f{e^{\f{\tau}{2}}}{(4 \pi a(\tau))^{\f{n- 1}{2}}}  \int_{\R^{n- 1}} G(\f{\eta- \eta'}{2(a(\tau))^{\f{1}{2}}}) \|f(\cdot, e^{\f{\tau}{2}} \eta')\|_{L_z^p(\R)} d\eta'= e^{\tau \cl_{\eta}}	\|  f(\cdot, \eta) \|_{L^p_z(\R)}
   	\end{eqnarray*}
   \end{proof}
  
  \subsection{Control of the free solutions} 
  \label{sec:4.1} 
  For the free solution term of $\al$, we have by \eqref{20_5}, with 
  $$
  e^{- \f{n- 3}{2} \tau}   \|e^{e^{\tau}  L_1} Q_0 \al(z,0)\|_{H^1_z}\leq C e^{- \f{n- 3}{2} \tau} e^{-\f{\de}{2} e^\tau} \|\al(z,0)\|_{H^1_z}\leq C\eps e^{-(n-\f{1}{2})\tau},
  $$
  where we gave up an exponential decay in $e^\tau$. 
  For the free solution term of $\ga$, we clearly have $e^{-\f{n-2}{2}\tau} |\ga(0)|\leq \eps e^{-\f{n-2}{2}\tau}$. 
  
  For the free solution of $\tilde{V}$, we need to control two terms. We have by \eqref{1_9} and \eqref{20_5} 
  \begin{eqnarray*}
  \|e^{(\cl_{\eta}+ \f{1}{2}) \tau}  e^{e^{\tau} L_1} Q_0 \widetilde{V}_0\|_{L^2(m) H^1_z} \leq 
  C e^{-\f{\de}{2}e^\tau} e^{-\f{n-2}{2}\tau} \|  \widetilde{V}_0\|_{L^2(m) H^1_z}\leq C\eps  e^{-(n-\f{1}{2}) \tau},
  \end{eqnarray*}
  where we gave up an exponential decay in $e^\tau$ as well. 
  For the other free solution term of  $\tilde{V}$, we have by   \eqref{303}, \eqref{20_5}  and \eqref{1_9} 
   \begin{eqnarray*}
   	\|e^{(\cl_{\eta}+ \f{1}{2}) \tau}   e^{e^{\tau} L_1} Q_0 \widetilde{V}_0\|_{L^\infty_\eta H^1_z} & \leq &  C 	\|e^{(\cl_{\eta}+ \f{1}{2}) \tau}  \| e^{e^{\tau} L_1} Q_0 \widetilde{V}_0\|_{H^1_z} 
   	\|_{L^\infty_\eta}\leq \\
   	&\leq & C e^{-\f{(n-2)}{2}\tau} e^{-\f{\de}{2}e^\tau } (\|\widetilde{V}_0\|_{L^\infty_\eta H^1_z}+
   	\|\widetilde{V}_0\|_{L^2(m) H^1_z})\leq C\eps  e^{-(n-\f{1}{2}) \tau}. 
   \end{eqnarray*}  
   For the free solution of the $\tilde{\Ga}$, we have by \eqref{1_9} and \eqref{20_55}, 
   $$
   \| e^{\tau \cl_\eta} \tilde{\Ga}_0\|_{L^\infty_\eta\cap L^2(m)}  \leq  C e^{-\f{n-1}{2}\tau} \|\tilde{\Ga}_0\|_{L^\infty_\eta\cap L^2(m)}. 
   $$
   For the terms $\|\nabla_\eta e^{\tau \cl_\eta} \tilde{\Ga}_0\|_{L^\infty_\eta\cap L^2(m)}$, we split our considerations in two cases, $\tau<1, \tau\geq 1$. 
 We consider the case $\tau<1$ first.  By  a formula equivalent to   \eqref{semi_2} 
 \begin{eqnarray*}
   \|\nabla_\eta e^{\tau \cl_\eta} \tilde{\Ga}_0\|_{L^\infty_\eta\cap L^2(m)}  &\leq &   \f{C}{\big(a(\tau) \big)^{\f{n- 1}{2}}} \|\int_{\rne} G \left(\f{\eta'}{2 a(\tau)^{\f{1}{2}}}\right) \nabla_\eta \tilde{\Ga}_0 (e^{\f{\tau}{2}}(\eta-\eta') ) d \eta'\|_{L^\infty_\eta\cap L^2(m) } \leq  \\
   &\leq & C\|\nabla_\eta 
   \tilde{\Ga}_0\|_{L^\infty_\eta\cap L^2(m)}\leq C\eps e^{-\f{n-1}{2}\tau}. \\
 \end{eqnarray*}  
 since $e^{\f{n-1}{2}\tau} $ is bounded for $0<\tau\leq 1$.       Finally for $\tau>1$, we have   that $a(\tau)\geq \f{1}{2}$, so we conclude from \eqref{20_55} 
   $$
   \|\nabla_\eta e^{\tau \cl_\eta} \tilde{\Ga}_0\|_{L^\infty_\eta}\leq  C  e^{-\f{n-1}{2}\tau}  \|\tilde{\Ga}_0\|_{L^2(m)} \leq C\eps e^{-\f{n-1}{2}\tau}
   $$ 
  This completes the cases of the free solutions. 
  
  Below, we shall use the semigroup estimates on the Duhamel terms in the same way we have used them on the free solutions. This will bring about certain norms on the nonlinear terms, so we  need to prepare these estimates. 
  \subsection{Estimates on the nonlinear terms $H(\Ga, V)$, $N_1 (\Ga, \nabla_{\eta} \Ga, V)$ and $N_2(\Ga, \nabla_{\eta} \Ga, V)$}
  We first note that due to \eqref{decomposit}, we have the following estimates 
  \begin{eqnarray*}
  & & \|V\|_{L^2(m) H^1_z}+\|V\|_{L^\infty  H^1_z}\leq \|\al(s, \cdot)\|_{H^1_z} (\|G\|_{L^\infty_\eta}+ \|G\|_{L^2(m)})+  \|\tilde{V}(s, \cdot)\|_{L^2(m) H^1_z}+\|\tilde{V}(s, \cdot)\|_{L^\infty_\eta  H^1_z},\\
  & & \|\Ga\|_{H^1(m)}+ \|\Ga\|_{W^{1, \infty}_\eta}  \leq |\ga(s)| 
  (\|G\|_{H^1(m)}+ \|G\|_{W^{1, \infty}_\eta}) + \|\tilde{\Ga}(s, \cdot)\|_{H^1(m)}+ 
  	\|\tilde{\Ga}(s, \cdot)\|_{W^{1, \infty}_\eta}.
  \end{eqnarray*}
   Thus, if $(\al, \ga, \widetilde{V}, \widetilde{\Ga})\in X: \|(\al, \ga, \widetilde{V}, \widetilde{\Ga})\|_X<\eps$, we conclude that the corresponding $(V, \Ga)$, given by \eqref{decomposit} satisfy 
   \begin{eqnarray}
   \label{500} 
  & &  \|V(s, \cdot)\|_{L^2(m) H^1_z}+\|V(s, \cdot)\|_{L^\infty_\eta  H^1_z}\leq C\eps e^{-(n-\f{1}{2}) s}, \\
  \label{510} 
  & &  \|\Ga(s, \cdot)\|_{H^1(m)}+ \|\Ga(s, \cdot)\|_{W^{1, \infty}_\eta}  \leq  C\eps e^{-\f{n-2}{2} s}, 
   \end{eqnarray}
   With that in mind, we present the following lemma. 
   \begin{lemma}
   	\label{nonl} 
   	Let $(V, \Ga)$ be as in \eqref{decomposit} and $(\al, \ga, \widetilde{V}, \widetilde{\Ga})\in X: \|(\al, \ga, \widetilde{V}, \widetilde{\Ga})\|_X<\eps$. Then, the nonlinearities $H(\Ga, V)$, $N_1 (\Ga, \nabla_{\eta}\cdot  \Ga, V)$ and $N_2(\Ga, \nabla_{\eta}\cdot  \Ga, V)$ obey the following bounds
   	\begin{eqnarray}
   	\label{520} 
   	\|H(\Ga, V)(s)\|_{L^2_\eta(m) H^1_z }\leq C\eps^2 e^{- (2 n-1) s}.  \\
   	\label{530} 
   		\|N_2 (\Ga, \nabla_{\eta}\cdot \Ga, V)\|_{L^2(m)}+
   		\| N_2(\Ga, \nabla_{\eta} \cdot\Ga, V)\|_{L^\infty_\eta}\leq C \eps^2 e^{-(n-2)s},  \\ 
   		\label{540} 
   	\|Q_0 N_1 (\Ga, \nabla_{\eta}\cdot \Ga, V)\|_{L^2(m) H^1_z }+
   	\|Q_0 N_1 (\Ga, \nabla_{\eta}\cdot \Ga, V)\|_{L^\infty_\eta H^1_z}\leq C\eps^2 e^{-(n-\f{3}{2})s}  
   	\end{eqnarray}
   \end{lemma}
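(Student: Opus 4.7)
The plan is to estimate each of the three nonlinearities termwise, exploiting that in one spatial dimension $H^1_z$ is a Banach algebra continuously embedded in $L^\infty_z$, that $\psi$ and the translates $\phi_\si,\phi'_\si,\phi''_\si$ are localized in $z$, and that the coefficients $K_1,K_2$ are uniformly bounded because $\dpr{\psi}{\phi'_\si}=1+O(\si)$. Preliminarily I would record two Lipschitz-type estimates: $\|\phi^{(k)}_\si - \phi^{(k)}\|_{H^1_z}\leq C|\si|$ for $k=1,2$, which follows from $\phi'\in H^2(\rone)$, and $|Df(\phi_\si)-Df(\phi)|+|D^2 f(\phi_\si)-D^2 f(\phi)|\leq C|\si|$ in $L^\infty_z$, which follows from $f\in C^4$ and the boundedness of $\phi$. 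These, together with the already-available bounds \eqref{500}--\eqref{510} for $V,\Ga$, are the only ingredients required.

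For \eqref{520}, split $H(\Ga,V)=\f{1}{2}D^2 f(\phi_{e^{-\tau/2}\Ga})V^2+e^{2\tau}E(e^{-\tau}V)$. Since $D^2 f(\phi_{e^{-\tau/2}\Ga})$ is a bounded multiplier on $H^1_z$ (its $z$-derivative involves only $\phi'$, which is bounded), the algebra property reduces the quadratic piece to $\|V^2\|_{L^2(m)_\eta H^1_z}\leq C\|V\|_{L^\infty_\eta H^1_z}\|V\|_{L^2(m) H^1_z}\leq C\eps^2 e^{-(2n-1)s}$ by \eqref{500}, matching the target. Since $E$ is cubic, $e^{2\tau}E(e^{-\tau}V)$ carries an extra factor $e^{-\tau}$ and is therefore strictly smaller.

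For \eqref{530}, the dominant term is $K_1(e^{-\tau/2}\Ga)(\nabla_\eta\cdot\Ga)^2$, bounded in both $L^2(m)$ and $L^\infty_\eta$ by $C\|\nabla_\eta\Ga\|_{L^\infty_\eta}\|\nabla_\eta\Ga\|_{L^2(m)\cap L^\infty_\eta}\leq C\eps^2 e^{-(n-2)s}$, which precisely saturates the target. The $\dpr{V^2}{\psi}_z$-contribution is handled by Cauchy--Schwarz in $z$ using $\psi\in L^2_z$ and the embedding $H^1_z\hookrightarrow L^\infty_z$, producing the faster rate $e^{-(2n-1)s}$; the same is true of $e^{2\tau}\dpr{\psi}{E(e^{-\tau}V)}$. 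The remaining term $e^\tau\dpr{\psi}{(Df(\phi_{e^{-\tau/2}\Ga})-Df(\phi))V}$ uses the Lipschitz estimate on $Df$ to produce a prefactor $Ce^{\tau/2}|\Ga(\eta)|$, whose resulting rate $\f{1}{2}-\f{n-2}{2}-(n-\f{1}{2})=2-\f{3n}{2}$ is bounded by $-(n-2)$ for all $n\geq 2$.

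For \eqref{540}, the essential new observation is that $N_2$ is independent of $z$, so $Q_0$ commutes past it and acts only on $\phi'_{e^{-\tau/2}\Ga(\eta)}$. Since $Q_0\phi'=0$,
\[
\|Q_0\phi'_{e^{-\tau/2}\Ga(\eta)}\|_{H^1_z}=\|Q_0(\phi'_{e^{-\tau/2}\Ga(\eta)}-\phi')\|_{H^1_z}\leq Ce^{-\tau/2}|\Ga(\eta)|,
\]
which yields a cubic bound easily absorbed into the target. The term $e^\tau(Df(\phi_{e^{-\tau/2}\Ga})-Df(\phi))V$ is treated identically to the analogous term in $N_2$, and $e^{-\tau/2}(\nabla_\eta\cdot\Ga)^2\phi''_{e^{-\tau/2}\Ga}$ is bounded using $\|\phi''\|_{H^1_z}<\infty$ together with the product estimate on $(\nabla_\eta\Ga)^2$; this yields the rate $-\f{1}{2}-(n-2)=-(n-\f{3}{2})$, which saturates the target and is what fixes the exponent in \eqref{540}. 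The main subtlety is precisely this interplay: the $Q_0$ cancellation is indispensable for the $N_2\phi'_\si$ term (without it the contribution would only decay as $e^{-(n-2)s}$, strictly slower than required), while the saturating rate comes from the $(\nabla_\eta\Ga)^2\phi''_\si$ term and cannot be improved within the present functional framework.
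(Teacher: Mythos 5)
Your proposal is correct and follows essentially the same approach as the paper: termwise product estimates using \eqref{500}--\eqref{510}, the Lipschitz bounds on $\phi^{(k)}_\sigma$ and $Df(\phi_\sigma)$, the boundedness of $K_1,K_2$, and crucially the $Q_0$ cancellation $Q_0[\phi'_{e^{-s/2}\Gamma}]=Q_0[\phi'_{e^{-s/2}\Gamma}-\phi']$, which identifies the $(\nabla_\eta\cdot\Gamma)^2\phi''_\sigma$ contribution as the one saturating the rate $e^{-(n-\f{3}{2})s}$. The only cosmetic difference is that you invoke the Banach-algebra property of $H^1_z$ for the quadratic piece of $H(\Gamma,V)$, whereas the paper writes out the pointwise derivative bounds and then integrates; these are equivalent.
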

 {\bf Remark:} Note that the spectral projections $Q_0, \cq_0$ appear in front of all nonlinearities displayed above. In almost all cases, that is for \eqref{520} and \eqref{530}, this does not make a difference in the bounds (i.e. the exponents on the right-hand side). The appearance of $Q_0$ in \eqref{540} though makes a difference (and even then, for only one term). Nevertheless, the estimate \eqref{540} without $Q_0$ holds with the weaker exponent $e^{-(n-2)s}$ on the right-hand side.
   \begin{proof}
   	Note that by Sobolev embedding, we have the {\it a priori} bound on $\|V\|_{L^\infty}$ as follows 
   \begin{equation}
   \label{550}
   	\|V(s)\|_{L^\infty_{z,\eta}} \leq C \|V(s, \cdot)\|_{L^\infty_\eta  H^1_z}\leq C\eps e^{-(n-\f{1}{2})  s}.
   \end{equation}
 We start with the estimate for $H(\Ga, V)=\f{1}{2}  D^2 f(\phi_{e^{-\f{s}{2}} \Ga}) V^2+ e^{2s} E(e^{-s} V)$.  We have the pointwise bound 
   	$$
   |\p_z[D^2f(\phi_{e^{-\f{s}{2} \Ga}}) V^2]|\leq C [|D^3f(\phi_{e^{-\f{s}{2} \Ga}})| |\phi'| |V|^2+ 
    |D^2 f(\phi_{e^{-\f{s}{2} \Ga}})|  |V| |\p_z V|]. 
   	$$
   	Due to the Taylor's  remainder formula, we can represent the error  term as follows 
   	$$
   	e^{2s} E(e^{-s} V)=\f{e^{-s}}{6} \int_0^1 D^3f(\phi_{e^{-\f{s}{2} \Ga}} +p e^{-s} V)V^3 (1-p)^3 dp, 
   	$$
   	whence by taking into account that $f\in C^4$ and $\phi, \phi', V$ are bounded functions, we have the pointwise bound 
   	\begin{equation}
   	\label{560} 
   	|\p_z e^{2 s} E(e^{-s} V)|\leq C e^{-s} [|\p_z V| |V|^2+ 
   	|V|^3 |\phi'_{e^{-\f{s}{2} \Ga}}|+|\p_z V| |V|^3 e^{-s}]. 
   	\end{equation}
   	Altogether, we get  the pointwise bounds  
   	$| H[\Ga, V]|+  	|\p_z[H[\Ga, V]| \leq  C [|V|^2 +   |V| |\p_z V|]$.  So, by \eqref{550} and \eqref{500}, we conclude 
   	$$
   	\|H(\Ga, V)(s)\|_{L^2_\eta(m) H^1_z }\leq 
   	C  \|V\|_{L^\infty_{z, \eta}} [\|V\|_{L^2(m) L^2_z}+ \|\p_z V\|_{L^2(m) L^2_z}]\leq C\eps^2 e^{-(2 n-1) s}. 
   	$$
   	Next, we deal with $N_2 (\Ga, \nabla_{\eta} \Ga, V)$. Recall 
   	 	\begin{eqnarray*}
    	 N_2 (\Ga, \nabla_{\eta}\cdot \Ga, V) &=&  K_1 (e^{- \f{s}{2}} \Ga) (\nabla_{\eta} \cdot \Ga)^2+ \f{1}{2 }  K_2(e^{- \f{s}{2}} \Ga)     D^2 f(\phi_{e^{- \f{s}{2}} \Ga}) \langle V^2, \psi\rangle  \\
   	 &+& 
   	  \f{1}{2 }  K_2(e^{- \f{s}{2}} \Ga)\left(e^{2s}  \dpr{\psi}{E(e^{-s} V)} +2 e^{s}  \langle \psi, (D f(\phi_{e^{- \f{s}{2}}\Ga})- D f(\phi) ) V\rangle \right). 
  	\end{eqnarray*}
  	Before we get on with $N_2$, recall that $|K_1(\si)|=O(1), |K_2(\si)|=O(1)$. Thus, 
  	 $  	 |K_1 (e^{- \f{s}{2}} \Ga) (\nabla_{\eta}\cdot  \Ga)^2|\leq C   |\nabla_\eta \Ga|^2$. 
We have by \eqref{510}, 
   	\begin{eqnarray*}
   	\|K_1 (e^{- \f{s}{2}} \Ga) (\nabla_{\eta} \cdot  \Ga)^2\|_{L^2(m)} \leq C   \|\nabla_\eta \cdot  \Ga\|_{L^2(m)}\|\nabla_\eta \cdot  \Ga\|_{L^\infty_\eta}  \leq C \eps^2 e^{-(n-2)s}
   	\end{eqnarray*}
   	Regarding the other terms, we estimate away the term $K_2(e^{-\f{s}{2}})$ by a constant and 
   		\begin{eqnarray*}
   		& &   	\| D^2 f(\phi_{e^{- \f{s}{2}} \Ga}) \langle V^2, \psi\rangle \|_{L^2(m)} +2 e^{s} \| \langle \psi, (D f(\phi_{e^{- \f{s}{2}}\Ga})- D f(\phi) ) V\rangle \|_{L^2(m)} + \\
   			&+& 
   			e^{2s} \| \dpr{\psi}{E(e^{-s} V)}\|_{L^2(m)}  \leq  C \|V\|_{L^2_z L^2(m)} \|V\|_{L^\infty_{\eta, z}} + C e^{\f{s}{2}} \|V\|_{L^\infty_\eta L^2_z} \|\Ga\|_{L^2(m)}  + \\
   			&+& C e^{-s} \|V\|_{L^\infty_{\eta, z}}^2 \|V\|_{L^2_z L^2(m)}\leq C\eps^2 e^{-\f{3n-4}{2}s}\leq C \eps^2 e^{-(n-2)s}.
   		\end{eqnarray*}
   	For the estimate of $\|N_2 (\Ga, \nabla_{\eta} \Ga, V)\|_{L^\infty_\eta}$, we have 
   	 	\begin{eqnarray*}
   	 		\|K_1 (e^{- \f{s}{2}} \Ga) (\nabla_{\eta} \cdot  \Ga)^2\|_{L^\infty_\eta}  \leq  C 
   	 	 	\|\nabla_\eta  \Ga\|_{L^\infty_\eta}^2\leq C \eps^2 e^{-(n-2)s}. 
   	 	\end{eqnarray*}
   	For the other terms 
   	\begin{eqnarray*}
   		& &   	\| D^2 f(\phi_{e^{- \f{s}{2}} \Ga}) \langle V^2, \psi\rangle \|_{L^\infty_\eta} +2 e^{s} \| \langle \psi, (D f(\phi_{e^{- \f{s}{2}}\Ga})- D f(\phi) ) V\rangle \|_{L^\infty_\eta} +  
   		e^{2s} \| \dpr{\psi}{E(e^{-s} V)}\|_{L^\infty_\eta}  \leq   \\
   		&\leq & C  \|V\|_{L^\infty_{\eta,z}} \|V\|_{L^\infty_{\eta} L^2_z}+C e^{\f{s}{2}} \|V\|_{L^\infty_\eta L^2_z} \|\Ga\|_{L^\infty_\eta} + C e^{-s}  \|V\|_{L^\infty_{\eta,z}} ^2 \|V\|_{L^\infty_\eta L^2_z} \leq C\eps^2 e^{-\f{3n-4}{2}s}\leq C\eps^2 e^{-(n-2)s}. 
   	\end{eqnarray*}
   	This completes the analysis of $N_2 (\Ga, \nabla_{\eta} \Ga, V)$ and \eqref{530} is established.

   	Finally, we discuss the proof of \eqref{540}, that is the control of the $N_1$ term in the relevant norms.    	Recall 
   	$$
   	 Q_0 N_1(\Ga, \nabla_{\eta} \cdot  \Ga, V)=  N_2(\Ga, \nabla_{\eta} \cdot \Ga, V) Q_0[\phi'_{e^{-\f{s}{2}} \Ga}]  + Q_0[e^{s} \big( D f(\phi_{e^{- \f{s}{2}}\Ga})- D f(\phi) \big) V+ 
   	 e^{- \f{s}{2}} (\nabla_{\eta}\cdot   \Ga)^2 \phi''_{e^{-\f{s}{2}} \Ga}]. 
   	$$
   	For the first term, 	note that since $Q_0[\phi']=0$ and \eqref{510}, 
   	\begin{eqnarray*}
   		\|Q_0[\phi'_{e^{-\f{s}{2}} \Ga}]\|_{H^1_z}=\|Q_0[\phi'_{e^{-\f{s}{2}} \Ga}-\phi']\|_{H^1_z}\leq C e^{-\f{s}{2}} \|\Ga\|_{L^\infty}\leq C \eps e^{-\f{n-1}{2}s}. 
   	\end{eqnarray*} 
   	We thus easily have by \eqref{530}, 
 \begin{eqnarray*}
   	\| N_2(\Ga, \nabla_{\eta}  \cdot \Ga, V) Q_0[\phi'_{e^{-\f{s}{2}} \Ga}]\|_{L^2(m)H^1_z\cap L^\infty_\eta H^1_z} &\leq &  C \|N_2(\Ga, \nabla_{\eta}  \cdot \Ga, V)\|_{L^2(m)\cap L^\infty_\eta} \|Q_0[\phi'_{e^{-\f{s}{2}} \Ga}]\|_{H^1_z} \\
   		&\leq& C\eps^3 e^{-\f{3n-5}{2}s}. 
\end{eqnarray*} 
   	For the next term, we use the boundedness of $Q_0$ in the function spaces that we use, to conclude  
 \begin{eqnarray*}
 \|e^{s} \big( D f(\phi_{e^{- \f{s}{2}}\Ga})- D f(\phi) \big) V\|_{L^2(m) H^1_z\cap L^\infty_\eta H^1_z} 
 &\leq &  C e^{\f{s}{2}} [\|\Ga\|_{L^2(m)}+ \|\Ga\|_{L^\infty_\eta}] (\|V\|_{L^\infty_\eta H^1_z}+\|V\|_{L^\infty_{\eta,z}})   \leq \\
 &\leq & C\eps^2 e^{-\f{3n-4}{2}s}\leq C \eps^2 e^{-(n-\f{3}{2}) s}.
 \end{eqnarray*}
   	For the last term, we have 
   	\begin{eqnarray*}
   & & 	 \|e^{- \f{s}{2}} (\nabla_{\eta}  \cdot \Ga)^2 \phi''_{e^{-\f{s}{2}} \Ga }\|_{L^2(m) H^1_z} \leq
   	  C e^{- \f{s}{2}} \|\nabla_\eta \cdot \Ga\|_{L^\infty_\eta} \|\nabla_\eta \cdot \Ga\|_{L^2(m)}\leq C\eps^2 e^{-(n-\f{3}{2})s}, \\
   	   & & 	 \|e^{- \f{s}{2}} (\nabla_{\eta}  \cdot \Ga)^2 \phi''_{e^{-\f{s}{2}} \Ga }\|_{L^\infty_\eta H^1_z} \leq
   	   C e^{- \f{s}{2}} \|\nabla_\eta \Ga\|_{L^\infty_\eta}^2 \leq C\eps^2 e^{-(n-\f{3}{2})s}. 
   	\end{eqnarray*}
   	Putting everything together, we arrive at \eqref{540}. Note that for $n\geq 3$, the dominant decay 
   	term for $e^{-(n-\f{3}{2})s}$ came only from the contribution of the term 
   	$Q_0[e^{- \f{s}{2}} (\nabla_{\eta} \cdot  \Ga)^2 \phi''_{e^{-\f{s}{2}} \Ga }]=
   	e^{- \f{s}{2}} (\nabla_{\eta}  \Ga)^2 Q_0[\phi''_{e^{-\f{s}{2}} \Ga }]$, since\footnote{Since $\phi'$ is the eigenvector for the single eigenvalue at zero for $L_1$, we have that $Q_0[g]\neq 0$ for all $g\neq \phi'$}   $Q_0[\phi'']\neq 0$. 
   	For $n=2$, the decay terms $e^{-\f{3n-5}{2}}=e^{-(n-\f{3}{2})s}=e^{-\f{s}{2}}$, so two terms contribute at the same rate. Even in this case though, the contribution of $N_2(\Ga, \nabla_{\eta} \cdot \Ga, V) Q_0[\phi'_{e^{-\f{s}{2}} \Ga}] $ is of order $\eps^3 e^{-s/2}$ versus $\eps^2 e^{-s/2}$  of 
   	$Q_0[e^{- \f{s}{2}} (\nabla_{\eta}\cdot   \Ga)^2 \phi''_{e^{-\f{s}{2}} \Ga }]$. 
   \end{proof}
   
   \subsection{Estimates on the Duhamel's terms}
 The following elementary lemmas will be useful as well. 
 \begin{lemma}
 	\label{ex0}
 		If $c,d>0: c\neq d$, then 
 		\begin{equation}
 		\label{714}
 		\int_0^{\tau} e^{-d(\tau-s)}\left( \f{1}{\sqrt{\tau-s}} +1\right)e^{-c s} ds\leq C_{c,d} e^{-\min(c,d)\tau}. 
 		\end{equation}
 	Let $b \in \R$, $\de> 0$ and $c \geq 0$ then
 	\begin{equation}
 	\label{711} 
 	\int_0^{\tau} e^{b (\tau- s)} e^{- \de (e^{\tau} - e^s)} e^{- c s} ds \leq C_{b, \de} e^{- (c+ 1) \tau}.
 	\end{equation}
 
 \end{lemma}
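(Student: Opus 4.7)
The lemma consists of two unrelated elementary bounds, so I would treat them separately.

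For \eqref{714}, the plan is to split the integrand along the factor $\f{1}{\sqrt{\tau-s}}+1$. The piece with the $+1$ is a convolution of two decaying exponentials and evaluates in closed form to $(e^{-c\tau}-e^{-d\tau})/(d-c)$; the hypothesis $c\neq d$ guarantees this is finite, and it is plainly bounded by $C e^{-\min(c,d)\tau}/|d-c|$. For the singular piece I would substitute $u=\tau-s$, factor out $e^{-c\tau}$, and analyze
\beqn
e^{-c\tau}\int_0^\tau e^{-(d-c)u}u^{-1/2}du
\eeqn
in two cases. When $d>c$, extending the range to $[0,\infty)$ yields a finite Gamma-type integral and the bound $Ce^{-c\tau}$ follows immediately. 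When $d<c$ the integrand grows exponentially, so I would split at $u=\tau/2$ and bound each half separately; the leading contribution is of order $e^{(c-d)\tau}/(c-d)\sqrt{\tau}$, and after multiplication by $e^{-c\tau}$ this returns the bound $Ce^{-d\tau}$. Both cases deliver the claimed $\min(c,d)$ rate.

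For \eqref{711}, the key observation is that the doubly-exponential factor $e^{-\de(e^\tau-e^s)}$ concentrates the integrand near the upper endpoint $s=\tau$. After the substitution $r=\tau-s$ and pulling out $e^{-c\tau}$, the integral becomes
\beqn
e^{-c\tau}\int_0^\tau e^{(b+c)r}e^{-\de e^\tau(1-e^{-r})}dr.
\eeqn
I would split the $r$-integral at $r=1$ (treating $\tau\leq 1$ separately by direct boundedness). On $[0,1]$ the elementary inequality $1-e^{-r}\geq r/2$ converts the doubly-exponential factor into $e^{-\de e^\tau r/2}$; combined with the bounded factor $e^{(b+c)r}\leq e^{|b+c|}$, the Laplace-type integral contributes at most $C_{b,\de}e^{-\tau}$, since the scale is $1/(\de e^\tau)$. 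On $[1,\tau]$ one has $1-e^{-r}\geq 1-e^{-1}$, so the factor $e^{-\de e^\tau(1-e^{-1})}$ is double-exponentially small in $\tau$ and easily dominates the polynomial-in-$\tau$ growth of $e^{(b+c)r}$, giving a contribution negligible compared with $e^{-\tau}$. Combining the two yields the asserted bound $C_{b,\de}e^{-(c+1)\tau}$.

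The only step that requires any care is the sub-case $c>d$ of \eqref{714}, in which the exponentially growing factor inside the integral must be balanced carefully against the $u^{-1/2}$ singularity to extract the sharp $\min(c,d)$ decay rate; everything else is a direct computation.
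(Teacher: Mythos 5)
Your proposal is correct. For \eqref{711} your argument is essentially the paper's: you both split the integral near the upper endpoint, linearize $1-e^{-r}\geq r/2$ there to expose a Laplace integral at scale $e^{-\tau}$, and observe that away from the endpoint the factor $e^{-\delta(e^\tau-e^s)}$ is double-exponentially small and overwhelms everything else. (The paper splits at $\tau-h_0$ with $h_0$ chosen from the definition of the derivative; you split at $\tau-1$ and note $1-e^{-r}\geq r/2$ on $[0,1]$, which is the same estimate.) For \eqref{714} you diverge slightly: the paper splits the $s$-domain at $\tau-1$, uses $(\tau-s)^{-1/2}\leq 1$ on $[0,\tau-1]$ to reduce to a pure exponential convolution, and on $[\tau-1,\tau]$ pulls out $e^{-c\tau}$ and integrates the integrable singularity; this handles both orderings of $c,d$ in one stroke. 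You instead split the integrand into the singular and non-singular pieces, substitute $u=\tau-s$, and treat $d>c$ (extend to a Gamma integral) and $d<c$ (split at $u=\tau/2$ and balance the growth against $u^{-1/2}$) separately. Your route is a bit longer because of the case distinction, but it is equally elementary and yields the same sharp $e^{-\min(c,d)\tau}$ rate; you should note that the small-$\tau$ range (say $\tau\leq 1$) is trivially bounded so that factors like $1/\sqrt{\tau}$ do not cause trouble, but that is a one-line remark.
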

The proof of Lemma \ref{ex0} is postponed for the Appendix. 
    We are now ready to deal with the Duhamel's term contributions, that is estimates \eqref{310}. 
    \subsubsection{The Duhamel's portion of $\al(z,\tau)$ in \eqref{200}}
We have by \eqref{20_5} 
  \begin{eqnarray*}
 & &  \|\int_0^{\tau} e^{- \f{n- 3}{2} (\tau- s)} e^{(e^{\tau}- e^{s})  L_1} Q_0 \Bigg[ \langle  H(\Ga, V), 1 \rangle_{\eta}(s) + \langle   N_1 (\Ga, \nabla_{\eta} \cdot \Ga, V), 1 \rangle_{\eta}(s) \Bigg] ds\|_{H^1_z}\leq \\
  & \leq & C \int_0^{\tau} e^{- \f{n- 3}{2} (\tau- s)} e^{-\f{\de}{2}(e^\tau- e^s)}  
   [\| \langle  H(\Ga, V), 1 \rangle_{\eta}(s)\|_{H^1_z} + \|\langle   N_1 (\Ga, \nabla_{\eta}\cdot  \Ga, V), 1 \rangle_{\eta}(s)\|_{H^1_z} ]ds \leq \\
   &\leq & C \int_0^{\tau} e^{- \f{n- 3}{2} (\tau- s)} e^{-\f{\de}{2}(e^\tau- e^s)}  [\|H(\Ga, V)(s)\|_{H^1_zL^2_\eta(m) } +   \|N_1 (\Ga, \nabla_{\eta} \cdot  \Ga, V)(s)\|_{H^1_zL^2_\eta(m) }] ds 
  \end{eqnarray*}
  According to \eqref{520} and \eqref{540}, the last expression is controlled by 
  $$
  C\eps^2  \int_0^{\tau} e^{- \f{n- 3}{2} (\tau- s)} e^{-\f{\de}{2}(e^\tau- e^s)}e^{-(n-\f{3}{2})s} ds\leq C \eps^2 e^{-(n-\f{1}{2})\tau},
  $$
  where in the last step, we have used \eqref{711}. 
   
    \subsubsection{The Duhamel's portion of $\ga(\tau)$ in \eqref{205}}
   $$
   	\int_0^{\tau} e^{- \f{n- 2}{2} (\tau- s)} e^{-\f{s}{2}} | \langle  
   		N_2 (\Ga, \nabla_{\eta} \cdot  \Ga, V), 1 \rangle_{\eta}(s) | ds \leq 	
   	C 	\int_0^{\tau} e^{- \f{n- 2}{2} (\tau- s)} e^{-\f{s}{2}} 
   		\|N_2 (\Ga, \nabla_{\eta} \cdot  \Ga, V), 1 \rangle_{\eta}(s)\|_{L^2(m)}  ds 
   $$ 
  The last expression is controlled, in view of \eqref{530}, by 
  $$
  C \eps^2 	\int_0^{\tau} e^{- \f{n- 2}{2} (\tau- s)} e^{-\f{s}{2}} 
    e^{-(n-2)s}  ds \leq C \eps^2 e^{- \f{n- 2}{2} \tau}. 
  $$
   \subsubsection{The Duhamel's portion of $\tilde{V}$ in \eqref{210}}
   We first take the norm $\|\cdot\|_{L^2(m)H^1_z}$. Let $l\in \{0,1\}$. We obtain from \eqref{303}, \eqref{1_9} and \eqref{20_5} and Fubini's 
   \begin{eqnarray*}
  & &  \| \int_0^{\tau} e^{(\tau- s)(\cl_{\eta}+ \f{1}{2}) } \mathcal{Q}_0 e^{(e^{\tau}- e^s) L_1} Q_0 
    \Big[  H(\Ga, V)(s)+   N_1 (\Ga, \nabla_{\eta} \cdot \Ga, V)(s)\Big] ds\|_{L^2(m) H^1_z} =\\
    &= &  \| \int_0^{\tau}      e^{(\tau- s)(\cl_{\eta}+ \f{1}{2}) }  \cq_0 \nabla_z^le^{(e^{\tau}- e^s) L_1} Q_0 
    \Big[  H(\Ga, V)(s)+   N_1 (\Ga, \nabla_{\eta} \cdot \Ga, V)(s)\Big]  \|_{L^2_z L^2_\eta(m)} ds\\
    &\leq &  \int_0^{\tau}  e^{-\f{n-2}{2}(\tau-s)} \|\nabla_z^le^{(e^{\tau}- e^s) L_1} Q_0 
    \Big[  H(\Ga, V)(s)+   N_1 (\Ga, \nabla_{\eta}\cdot  \Ga, V)(s)\Big] \|_{L^2_z L^2_\eta(m)} ds \leq \\
    &\leq &  C \int_0^{\tau}  e^{-\f{n-2}{2}(\tau-s)} e^{-\f{\de}{2}(e^\tau-e^s)} 
    [\|H(\Ga, V)(s)\|_{H^1_zL^2_\eta(m) } +   \|N_1 (\Ga, \nabla_{\eta} \cdot \Ga, V)(s)\|_{H^1_zL^2_\eta(m) }] ds 
   \end{eqnarray*}
  
  Next, we deal with $\|\cdot\|_{L^\infty_\eta H^1_z}$. We have from \eqref{20_55}   
   \begin{eqnarray*}
   	& &  \| \int_0^{\tau} e^{(\tau- s)(\cl_{\eta}+ \f{1}{2}) } \mathcal{Q}_0 e^{(e^{\tau}- e^s) L_1} Q_0 
   	\Big[  H(\Ga, V)(s)+   N_1 (\Ga, \nabla_{\eta} \cdot \Ga, V)(s)\Big] ds\|_{L^\infty_\eta H^1_z} =\\
   	&= &  \| \int_0^{\tau}      e^{(\tau- s)(\cl_{\eta}+ \f{1}{2}) } \cq_0  \nabla_z^le^{(e^{\tau}- e^s) L_1} Q_0 
   	\Big[  H(\Ga, V)(s)+   N_1 (\Ga, \nabla_{\eta}  \cdot \Ga, V)(s)\Big]  \|_{L^\infty_\eta L^2_z  } ds\\
   	&\leq & \int_0^\tau e^{-\f{n-2}{2}(\tau-s)} \|\nabla_z^le^{(e^{\tau}- e^s) L_1} Q_0 
   	\Big[  H(\Ga, V)(s)+   N_1 (\Ga, \nabla_{\eta}\cdot  \Ga, V)(s)\Big]  \|_{L^\infty_\eta L^2_z \cap L^2(m) L^2_z }ds \leq \\
   		&\leq & \int_0^\tau e^{-\f{n-2}{2}(\tau-s)} e^{-\f{\de}{2}(e^\tau-e^s)}  [\|H(\Ga, V)(s)\|_{H^1_zL^2_\eta(m)\cap L^\infty_\eta L^2_z } +   \|N_1 (\Ga, \nabla_{\eta} \cdot \Ga, V)(s)\|_{H^1_zL^2_\eta(m) \cap L^\infty_\eta L^2_z }] ds 
   \end{eqnarray*}
  In view of \eqref{520} and \eqref{540}, we control both contributions by 
  $$
  C\eps^2 \int_0^\tau e^{-\f{n-2}{2}(\tau-s)} e^{-\f{\de}{2}(e^\tau-e^s)} e^{-(n-\f{3}{2})s} ds\leq C \eps^2 e^{-(n-\f{1}{2})\tau}, 
  $$
   where again  in the last step, we have used \eqref{711}. 
  \subsubsection{The Duhamel's portion of  \ $\tilde{\Ga}$ in \eqref{215}}
  For $l\in \{0,1\}$, we obtain from \eqref{1_9} 
  $$
   \|\int_0^{\tau} e^{(\tau- s) \cl_{\eta} }  \mathcal{Q}_0  e^{-\f{s}{2}}  N_2 (\Ga, \nabla_{\eta} \cdot \Ga, V) (s) ds  \|_{H^1(m)} \leq  C \int_0^\tau e^{-\f{n-1}{2}(\tau-s)} e^{-\f{s}{2}}  
\|N_2 (\Ga, \nabla_{\eta} \cdot \Ga, V) (s)\|_{L^2(m)} ds 
 $$ 
  Next, for the norm $\|\cdot\|_{L^\infty_\eta}$, we obtain from \eqref{20_55} 
  $$
    \|\int_0^{\tau} e^{(\tau- s) \cl_{\eta} }  \mathcal{Q}_0  e^{-\f{s}{2}}  N_2 (\Ga, \nabla_{\eta}\cdot  \Ga, V) (s) ds  \|_{L^\infty_\eta} \leq  C \int_0^\tau  e^{-\f{n-1}{2}(\tau-s)}  e^{-\f{s}{2}}  
   	\|N_2 (\Ga, \nabla_{\eta} \cdot \Ga, V) (s)\|_{L^2(m)\cap L^\infty} ds 
  $$
  Finally, for $\|\nabla[\cdot]\|_{L^\infty_\eta}$, we obtain from \eqref{20_55}  
  $$
  \|\int_0^{\tau} \nabla_\eta e^{(\tau- s) \cl_{\eta} }  \mathcal{Q}_0  e^{-\f{s}{2}}  N_2 (\Ga, \nabla_{\eta}  \cdot \Ga, V) (s) ds  \|_{L^\infty_\eta} \leq  C \int_0^\tau \f{e^{-\f{n-1}{2}(\tau-s)}}{\sqrt{a(\tau-s)}}  e^{-\f{s}{2}}  
  \|N_2 (\Ga, \nabla_{\eta} \cdot \Ga, V) (s)\|_{L^2(m)\cap L^\infty} 
  $$
  By \eqref{530}, we control the last three integrals by 
  $$
  C\eps^2 [\int_0^\tau \f{e^{-\f{n-1}{2}(\tau-s)}}{\sqrt{\tau-s}}  e^{-\f{s}{2}}  
   e^{-(n-2)s} ds +  \int_0^\tau  e^{-\f{n-1}{2}(\tau-s)}   e^{-\f{s}{2}}  
   e^{-(n-2)s} ds] \leq C\eps^2 e^{-\f{n-1}{2}\tau},
  $$
   where in the last stage, we have used \eqref{714}.

  \section{Sharpness of the decay rates and asymptotic profiles} 
  \label{sec:5} 
In this section,  we discuss the sharpness of these rates as well as the asymptotic profiles. 

\subsection{The asymptotic profiles for $\si$} The statements for $\Ga$ are straightforward as the decay rate for $\ga(\tau)$ (see \eqref{700}), $e^{-\f{n-2}{2}\tau}$ is strictly slower   than the decay rate for $\tilde{\Ga}$, which is $e^{-\f{n-1}{2}\tau}$. In addition, by examining the evolution equation for $\ga(\tau)$, 
\eqref{205} and the subsequent estimates in Section \ref{sec:4}, we see that 
$$
\ga(\tau)=\ga(0) e^{-\f{n-2}{2}\tau}+O(e^{-\f{n-1}{2}\tau})= \dpr{\Ga(0, \cdot)}{1}_\eta e^{-\f{n-2}{2}\tau}+O(e^{-\f{n-1}{2}\tau})= (\int_{\rne} \si_0(y) dy) e^{-\f{n-2}{2}\tau}+O(e^{-\f{n-1}{2}\tau}).
$$ 
   It follows that 
   $$
   \|\Ga(\tau, \cdot) - (\int_{\rne} \si_0(y) dy) e^{-\f{n-2}{2}\tau}G(\cdot)\|_{L^\infty_\eta}\leq C\eps^2 e^{-\f{n-1}{2}\tau}.
   $$
  By the estimates for $\nabla_\eta \tilde{\Ga}$ in $L^\infty_\eta$, it follows that 
  $$
  \|\nabla[\Ga(\tau, \cdot) - (\int_{\rne} \si_0(y) dy) e^{-\f{n-2}{2}\tau} G(\cdot)]\|_{L^\infty_\eta}\leq C\eps^2 e^{-\f{n-1}{2}\tau}.
  $$
  Translating back to the original variables, 
 \begin{eqnarray*}
& &   \left\|\si(t, \cdot)-  \f{(\int_{\rne} \si_0(y) dy)}{(1+t)^{\f{n-1}{2}}}  G\left(\f{\cdot}{\sqrt{1+t}}\right) \right\|_{L^\infty_y}\leq \f{C\eps^2}{(1+t)^{\f{n}{2}}}, \\
& &   \left\|\nabla_y\si(t, \cdot)-  \f{(\int_{\rne} \si_0(y) dy)}{(1+t)^{\f{n}{2}}}  
(\nabla_y G)\left(\f{\cdot}{\sqrt{1+t}}\right) \right\|_{L^\infty_y}\leq \f{C\eps^2}{(1+t)^{\f{n+1}{2}}}, 
 \end{eqnarray*}
 These are precisely the estimates \eqref{71}, \eqref{72}. 
 \subsection{Asymptotic profiles for the radiation term $v$} 
  The goal in this section is to isolate a leading order term, $\bar{V}$ for $V$, which decays at the leading order rate $e^{-(n-\f{1}{2})\tau}$. A quick look at the estimates for the free solutions in Section \ref{sec:4.1} confirms that they decay exponentially in $e^\tau$. 
  
  Next, going to the Duhamel terms, assume for the moment $n\geq 3$.  We have seen that the leading order nonlinearity is exactly $Q_0 [e^{-\f{s}{2}} (\nabla_\eta\cdot \Ga)^2 \phi''_{e^{-\f{s}{2}} \Ga}]$, which decays of the order $e^{-(n-\f{3}{2})s}$ (and thus produces through the Duhamel's operator an object with a decay of about $e^{-(n-\f{1}{2})\tau}$), while all the others are of rates of at least $e^{-\f{3n-5}{2}s}$ (and thus produce, through the Duhamels operator terms of decay of at least $e^{-\f{3 n-3}{2}\tau}$). {\it Note that in this argument, we certainly need to establish lower bound for the Duhamel's operator, which is  acting on what we believe is the main term, $Q_0 [e^{-\f{s}{2}} (\nabla_\eta\cdot \Ga)^2 \phi''_{e^{-\f{s}{2}} \Ga}]$. So far, we have only established upper bounds and it is not clear {\it a priori} whether some hidden cancellation does not occur within the Duhamel's operator formalism.} 
  
  In order to establish the said lower bounds, we start by further reducing the leading order terms, by peeling off lower order (i.e. faster decaying) terms.  Taking into account $\tilde{\Ga}=O(e^{-\f{n-1}{2}s})$ and $e^{-\f{s}{2}} \Ga=O(e^{-\f{n-1}{2}s})$, 
   \begin{eqnarray*}
    & & Q_0 [e^{-\f{s}{2}} (\nabla_\eta\cdot \Ga)^2 \phi''_{e^{-\f{s}{2}} \Ga}] =  e^{-\f{s}{2}} (\nabla_\eta\cdot \Ga)^2 Q_0[\phi''_{e^{-\f{s}{2}} \Ga}]= e^{-\f{s}{2}} (\nabla_\eta\cdot (\ga(\tau) G+\tilde{\Ga}))^2
    Q_0[\phi''+(\phi''_{e^{-\f{s}{2}} \Ga}-\phi'')]\\
    &=& e^{-\f{s}{2}} (\nabla_\eta\cdot (\ga(s) G))^2
    Q_0[\phi'']+O(e^{-(n-1)s})=\ga_0^2 e^{-(n-\f{3}{2})s}  (\nabla_\eta\cdot  G)^2 Q_0[\phi'']+O(e^{-(n-1)s})
   \end{eqnarray*}
  where in the last equality, we  used  $\ga(s)=\ga_0 e^{-\f{n-2}{2}s}+O(e^{-\f{n-1}{2}s})$. 
  In view of the equations \eqref{VV}, we see that if the term $\bar{V}$ satisfies {\it the linear inhomogeneous equation} 
  \begin{equation}
  \label{8001} 
  \bar{V}_\tau=(\cl_\eta+\f{1}{2}) \bar{V} + e^\tau L_1 \bar{V} + \ga_0^2 e^{-(n-\f{3}{2})\tau}  (\nabla_\eta\cdot  G)^2 Q_0[\phi''],  \bar{V}(0)=0.
  \end{equation}
  where we recall that $\ga_0=\dpr{\Ga}{1}_\eta=\int_{\rne} \si_0(y) dy$.  Denote $H:= (\nabla_y \cdot  e^{-\f{|y|^2}{4}})^2=\f{|y|^2}{4} e^{-\f{|y|^2}{2}}$. Then, \eqref{8001} reads 
   \begin{equation}
   \label{800} 
   \bar{V}_\tau=(\cl_\eta+\f{1}{2}) \bar{V} + e^\tau L_1 \bar{V} + \ga_0^2 e^{-(n-\f{3}{2})\tau}   Q_0[\phi''](z) H(\eta),  \bar{V}(0, z, \eta)=0.
   \end{equation}
  Due to the estimates that we had for the remaining nonlinearities (and more precisely \eqref{711}, which upgrades the Duhamel's term by $e^{-\tau}$ over the non-linearity) , 
  we will have the asymptotic estimate 
   \begin{equation}
   \label{810} 
  \|V(\tau, \cdot)-\bar{V}(\tau, \cdot)\|_{(H^1(m)\cap W^{1, \infty})_\eta H^1_z}\leq C \eps^2 e^{-n\tau}. 
 \end{equation}
 At this point, it is more advantageous to translating back to the original variables. In doing so,  via the assignment $\bar{v}(z,y,t)=\f{1}{1+t} 
  \bar{V}(z, \f{y}{\sqrt{1+t}}, \ln(1+t))$, we obtain the following equation for $\bar{v}$ 
\begin{equation}
\label{820} 
   \bar{v}_t=L \bar{v} + \f{(\int_{\rne} \si_0(y) dy)^2}{(1+t)^{n+\f{1}{2}}}     H
   	\left(\f{y}{\sqrt{1+t}}\right) Q_0[\phi''],  \bar{v}(0)=0, 
\end{equation}
where recall $L=L_1+\De_y$. 
Similarly, \eqref{810} translates into the following  estimate for $v-\bar{v}$, 
\begin{equation}
\label{830} 
\|v(t, \cdot)-\bar{v}(t, \cdot)\|_{L^\infty_{y z}}\leq C \eps^2 (1+t)^{-(n+1)}.
\end{equation}
  We will now compute $\bar{v}$ to a leading order. As a solution to \eqref{820}, we have the formula 
  \begin{eqnarray*}
  	\bar{v}(t)=c_0 \int_0^t e^{(t-s) L_1}[Q_0 \phi''] 
  	\f{e^{(t-s) \De_y}[H \left(\f{\cdot }{\sqrt{1+s}}\right)]}{(1+s)^{n+\f{1}{2}}} ds, c_0:=\f{(\int_{\rne} \si_0(y) dy)^2}{(4\pi)^{n-1}}. 
  \end{eqnarray*}
  Next, we need to compute  $e^{(t-s) \De_y}[H \left(\f{\cdot}{\sqrt{1+s}}\right)]$. 
   Before we go any further, we take a moment to introduce the Fourier transform, its inverse and some explicit formulas that will be useful. 
   $$
   \hat{f}(\xi)=\int_{\rne} f(x) e^{-2\pi i x \cdot  \xi} dx, \ \  f(x) =\int_{\rne}  \hat{f}(\xi) e^{2\pi i x\cdot \xi} d\xi
   $$
  With this definition, $\widehat{e^{-a |x|^2}}(\eta)= \left(\f{\pi}{a} \right)^{\f{n-1}{2}} 
  e^{-  \f{\pi^2 |\eta|^2}{a}}$, so 
   $$
   \hat{H}(\eta)=-\f{1}{16\pi^2} \De_\eta[\widehat{e^{-\f{|\cdot|^2}{2}}}]= \f{(2\pi)^{\f{n-1}{2}}}{4} e^{-2\pi^2 |\eta|^2}(1+c_1 |\eta|^2). 
   $$
   for some constant $c_1$.   Furthermore,  
  \begin{eqnarray*}
   \widehat{e^{(t-s) \De_y}[H\left(\f{y}{\sqrt{1+s}}\right)]}(\eta) &=& e^{-4\pi^2(t-s) |\eta|^2} (1+s)^{\f{n-1}{2}} \hat{H}( \eta \sqrt{1+s})= \\
  	&=& \f{(2\pi)^{\f{n-1}{2}}}{4}(1+s)^{\f{n-1}{2}} e^{-2\pi^2 (2t+1-s) |\eta|^2}  (1+c_1 (1+s) |\eta|^2).
  \end{eqnarray*}
    Eventually, in  the term $(1+s)^{\f{n+1}{2}} |\eta|^2e^{-2\pi^2 (2t+1 -s) |\eta|^2} $ produces lower order terms, so it can be dropped. Note that $2t+1-s>0$, when $s\in (0,t)$. Inverting the Fourier transform above yields 
    $$
    e^{(t-s) \De_y}[H\left(\f{\cdot}{\sqrt{1+s}}\right)] (y) =  \left(\f{1+s}{2t+1-s}\right)^{\f{n-1}{2}} e^{-\f{|y|^2}{2(2t+1-s)} }+ l.o.t.
    $$
    This allows us to  write 
    $$
    \bar{v}(t)=c_0  \int_0^t e^{(t-s) L_1}[Q_0 \phi'']  
    \f{e^{-\f{|y|^2}{2(2t+1-s)}} }{(2t+1-s)^{\f{n-1}{2}}(1+s)^{\f{n}{2}+1}}  ds+l.o.t. 
    $$
Introduce  $M(t,s,y):=\f{e^{-\f{|y|^2}{2(2t+1-s)}} }{(2t+1-s)^{\f{n-1}{2}}(1+s)^{\f{n}{2}+1}}$ and note that the operator $L_1$ is invertible on $Q_0[L^2_z]$. Thus, performing an integration by parts, 
  \begin{eqnarray*}
  	I(t,y,z) &=& \int_0^t M(t,s,y) e^{(t-s) L_1}[Q_0 \phi'']  ds=-
  	M(t,s,y) e^{(t-s)L_1}L_1^{-1} Q_0[\phi''] |_{0}^t + \\
  	&+&  \int_0^t e^{(t-s) L_1} [L_1^{-1} Q_0 \phi''] \f{\p M}{\p s} (t,s,y) ds=-L_1^{-1} Q_0[\phi'']  M(t,t,y) + \\
  	&+& 
  	M(t,0,y) e^{t L_1} [L_1^{-1} Q_0[\phi'']]+\int_0^t e^{(t-s) L_1} [L_1^{-1} Q_0 \phi''] \f{\p M}{\p s} (t,s,y) ds. 
  \end{eqnarray*}
  We argue that the leading order  term is 
  \begin{equation}
  \label{850} 
  -c_0 L_1^{-1} Q_0[\phi'']  M(t,t,y)= - c_0 \f{e^{-\f{|y|^2}{2(t+1)}}}{(t+1)^{n+\f{1}{2}}} L_1^{-1} Q_0[\phi''], 
  \end{equation}
  which clearly has a decay rate in $L^\infty_{y,z}$ of order $(1+t)^{-(n+\f{1}{2})}$ as stated.  We now need to show that the remaining two terms have faster decay rates.  For the term $e^{t L_1} [L_1^{-1} Q_0 \phi''] $, we have by Sobolev embedding and \eqref{20_5}
 \begin{equation}
 \label{814} \|e^{t L_1} [L_1^{-1} Q_0 \phi''] \|_{L^\infty_z}\leq C   \|e^{t L_1} [L_1^{-1} Q_0 \phi''] \|_{H^1_z}\leq C_\de e^{-\f{\de}{2} t} \|L_1^{-1} Q_0 \phi'']\|_{H^1_z}, 
 \end{equation}
  so it has an exponential decay in time.  Similarly, splitting  the integral 
  $$
    \int_0^t e^{(t-s) L_1} [L_1^{-1} Q_0 \phi''] \f{\p M}{\p s} (t,s,y) ds=\int_0^{t-\sqrt{t}}   \ldots ds + \int_{t-\sqrt{t}}^t \ldots ds
    $$
     allows us to estimate the former integral as follows, 
\begin{eqnarray*}
   \| \int_0^{t-\sqrt{t}}  e^{(t-s) L_1} [L_1^{-1} Q_0 \phi''] \f{\p M}{\p s} (t,s,y) ds\|_{L^\infty_z} &\leq &   \int_0^{t-\sqrt{t}}  \|e^{(t-s) L_1} [L_1^{-1} Q_0 \phi'']\|_{L^\infty_z}  |\f{\p M}{\p s} (t,s,y)| ds   \\
 &\leq & C_\de e^{-\f{\de}{2} \sqrt{t}}  \|L_1^{-1} Q_0 \phi'']\|_{H^1_z}\leq C (1+t)^{-(n+1)}. 
 \end{eqnarray*}
 since on the region of integration $t-s\geq \sqrt{t}$, and we can apply \eqref{814}.  
 For the latter integral, one can see that for $s\in (t-\sqrt{t},t)$, we have by \eqref{814},  $\|e^{(t-s) L_1} [L_1^{-1} Q_0 \phi'']\|_{L^\infty_z}\leq C_\de$, so that 
  \begin{eqnarray*}
     \| \int_{t-\sqrt{t}}^t  e^{(t-s) L_1} [L_1^{-1} Q_0 \phi''] \f{\p M}{\p s} (t,s,y) ds\|_{L^\infty_{z,y}} & \leq &   \int_{t-\sqrt{t}}^t  \|e^{(t-s) L_1} [L_1^{-1} Q_0 \phi'']\|_{L^\infty_z}  \|\f{\p M}{\p s} (t,s,y)\|_{L^\infty_y} ds \\
  	&\leq & C_\de  \int_{t-\sqrt{t}}^t \|\f{\p M}{\p s} (t,s,y)\|_{L^\infty_y} ds\leq   \f{C}{(1+t)^{n+1}}, 
  \end{eqnarray*}
  where in the last step, we have used  that if $s\sim t$, then 
  $\|\f{\p M}{\p s} (t,s,y)\|_{L^\infty_y}\leq   C(1+t)^{-n-\f{3}{2}}$. 
  All in all, summarizing the results from this section, we have established that 
  $$
  \|\bar{v}+c_0 \f{e^{-\f{|y|^2}{2(t+1)}}}{(t+1)^{n+\f{1}{2}}} L_1^{-1} Q_0[\phi''] \|_{L^\infty_{z,y}}\leq C(1+t)^{-n-1},
  $$ 
  which combined with \eqref{830} leads us to \eqref{831}. 
  
  For the case of $n=2$, we saw that there are two terms in the nonlinearity (for the equation in the scaled variables)  with dominant decay rate, namely $N_2(\Ga, \nabla_{\eta} \cdot \Ga, V) Q_0[\phi'_{e^{-\f{s}{2}} \Ga}] $ and \\ $Q_0 [e^{-\f{s}{2}} (\nabla_\eta\cdot \Ga)^2 \phi''_{e^{-\f{s}{2}} \Ga}]$. We have just analyzed the second one, which produces (on a solution level and in the standard variables) the term found in \eqref{850}, which is of order $\eps^2 (1+t)^{-\f{5}{2}}$, for $n=2$.  On the other hand, the term $N_2(\Ga, \nabla_{\eta} \cdot \Ga, V) Q_0[\phi'_{e^{-\f{s}{2}} \Ga}] $  produces a solution less than $C \eps^3 (1+t)^{-\f{5}{2}}$, and as such is lower order  in $\eps$, but of the same order in terms of power decay in $t$.  These exact results are summarized in \eqref{831} and  \eqref{832}. 
  \appendix
  
  \section{Proof of Lemma \ref{le:10}} 
Set up a mapping
$$
{\mathbb G}(w; v, \si)(z,y)=\phi(z-\si(y))+v(z,y) -\phi(z)-w(z,y)
$$
We will show first that ${\mathbb G}: (H^1(m)\cap W^{1, \infty})_y H^1_{z} \times {\mathcal R}\times  (H^1(m) \cap W^{1, \infty})   \to (H^1(m)\cap W^{1, \infty})_y H^1_{z}$. This follows easily from the mean value theorem, since 
$$
{\mathbb G}(w; v, \si)(z,y)=-\si(y) \int_0^1 \phi'(z-\tau \si(y)) d\tau + v(z,y) - w(z,y), 
$$
and $\phi'\in H^1(\rone)$. 
Clearly ${\mathbb G}(0,0,0)=0$, so by the implicit function theorem,  it remains to check that 
$$
d {\mathbb G}(0,0,0) (\tilde{\si}, \tilde{v})=-\phi'(z) \tilde{\si}+\tilde{v}
$$
 is an isomorphism on  $(H^1(m)\cap W^{1, \infty})_y H^1_{z}$. To this end, let $h\in (H^1(m)\cap W^{1, \infty})_y H^1_{z}$ be an arbitrary element   and we have to resolve the equation 
 \begin{equation}
 \label{60}
 -\phi'(z) \tilde{\si}+\tilde{v}=h.
 \end{equation}
 Clearly, by the properties of ${\mathcal R}$ and ${\mathcal N}$, \eqref{60} has an unique solution, namely $\tilde{\si}(y) =-\dpr{h(\cdot, y)}{\psi(\cdot)}$, while $\tilde{v}=Q_0 h\in {\mathcal R}$. Moreover, these mappings are linear and 
 \begin{eqnarray*}
 & &  \|\tilde{\si}\|_{H^1(m) \cap W^{1, \infty} }\leq \|\psi\|_{L^2_z} \|h\|_{(H^1(m) \cap W^{1, \infty})_y L^2_{z}},\\
 & &  \|\tilde{v}\|_{H^1(m) \cap W^{1, \infty} H^1_{z}}\leq C \|h\|_{(H^1(m) \cap W^{1, \infty})_y H^1_{z}}. 
 \end{eqnarray*}
 Thus, the implicit function theorem applies and in a neighborhood of zero, there are unique and small $\si(w)\in  H^1(m) \cap W^{1, \infty} ,  v(w)\in {\mathcal R}$, so that 
 ${\mathbb G}(w; v(w), \si(w))=0$. Equivalently, \eqref{70} holds.

  \section{Proof of Lemma \ref{le:l1}} 
  The proof of the bound \eqref{20_5} follows from the Gearheart-Pr\"uss theorem in the following way. Since, by our assumption \eqref{20} the spectrum is to the left of any vertical line in the complex plane $\{z: \Re z=-\de_1\}$, $0<\de_1<\de$, it will suffice to show that for a fixed such $\de_1$, 
  \begin{equation}
  \label{65} 
 \sup_{\mu\in \rone}  \|(L_1+\de_1+ i \mu)^{-1}\|_{H^1(\rone)\to H^1(\rone)}=C_{\de_1}<\infty. 
  \end{equation}
  Indeed, the Geraheart Pr\"uss theorem guarantees that if $\si(L_1)\subset \{z: \Re z<-\de_1\}$ and \eqref{65} 	 holds,  then the operator $L_1+\de_1$ generates a semigroup with strictly negative growth bound, that is - there exists $\eps>0$, so that $\|e^{s(L_1+\de_1)}\|_{H^1(\rone)\to H^1(\rone)}\leq C_{\de_1}  e^{-\eps s}$ or, equivalently 
  $$
  \|e^{sL_1}\|_{H^1(\rone)\to H^1(\rone)}\leq C_{\de_1}  e^{- s(\eps+\de_1)}\leq C_{\de_1}  
  e^{- s \de_1},
  $$  
  which is \eqref{20_5}. 
  
  Thus, it suffices to establish \eqref{65}. To this end, fix $\de_1$ and observe that since 
  the resolvent $(L_1+z)^{-1}$ is analytic $B(H^1(\rone))$ valued function on $\{z: \Re z>-\de\}$, it is continuous in the same region and in particular, for each $N$, there is $C_N$, 
  $$
  \sup_{\mu\in \rone: |\mu|<N}   \|(L_1+\de_1+ i \mu)^{-1}\|_{H^1(\rone)\to 
  H^1(\rone)}=C_{\de_1, N}<\infty 
  $$
  Thus, the real issue is to establish the bounds in \eqref{65} for all large enough $\mu$. So, we setup $g\in H^1(\rone)$ and $f=(L_1+\de_1+ i \mu)^{-1} g$ or equivalently 
\begin{equation}
\label{67} 
 f''+c f'+ W f + \de_1 f+i \mu f=g,
\end{equation}
where $W=Df(\phi)$ is a bounded, real-valued  potential. 

 The existence of such an $f\in H^1(\rone)$ is not in any doubt, by the spectral assumptions, we just need {\it a posteriori}  uniform in $\mu$ estimates for it, for all large enough $\mu$. We take a dot product of \eqref{67} with $f$. Taking imaginary parts of the said dot product leads to the identity 
 $$
 \mu \|f\|^2+ c\Im \dpr{f'}{f}=\Im \dpr{g}{f}.
 $$
  Applying the Cauchy-Schwartz inequality and after some algebraic manipulations, we obtain that for every $\eps>0$, there is $C_\eps$, so that 
  $$
  \|f\|^2\leq \eps \|f\|^2 + \f{C_\eps}{\mu^2} (\|f'\|^2+\|g\|^2). 
  $$
  Selecting $\eps=\f{1}{2}$, we get the {\it a posteriori} estimate 
\begin{equation}
\label{69} 
 \|f\|^2\leq  \f{C}{\mu^2} (\|f'\|^2+\|g\|^2). 
\end{equation}
  We now take the real-part of the dot produc of \eqref{67} with $f$. We similarly obtain for every $\eps>0$,
  $$
  \|f'\|^2\leq \eps  \|f'\|^2 + D_\eps [\|f\|^2+\|g\|^2].
  $$ 
  Plugging in \eqref{69} into this last inequality yields 
  $$
  \|f'\|^2\leq  \eps  \|f'\|^2 +  \f{M_\eps}{\mu^2} (\|f'\|^2+\|g\|^2)+D_\eps \|g\|^2.
  $$
 Selecting $\eps=\f{1}{4}$ and then $\mu$ so large so that $\f{M_\eps}{\mu^2}<\f{1}{4}$, we arrive at
 $$
 \|f'\|^2\leq D \|g\|^2.
 $$
 Combining the last estimate with \eqref{69} yields   the desired, uniform in $\mu$ estimate \eqref{65}.

  \section{Proof of Lemma \ref{ex0}} 
  \subsection{Proof of \eqref{714}} The estimate \eqref{714} is standard. We estimate the integrals $\int_0^{\tau-1}\tau... ds$ and $\int_{\tau-1}^\tau..  ds$ separately. We have that 
  $$
  	\int_0^{\tau-1}  e^{-d(\tau-s)}\left(\f{1}{\sqrt{\tau-s}} +1 \right)e^{-c s} ds\leq e^{-d\tau} \left(\f{e^{(d-c)(\tau-1)}-1}{d-c} \right)\leq \f{e^{-\min(d,c)\tau}}{|d-c|}. 
  $$
  For the other term, 
  $$
  \int_{\tau-1}^\tau \f{e^{-d(\tau-s)}}{\sqrt{\tau-s}} e^{-c s} ds \leq e^c  e^{-c\tau}\int_{\tau-1}^\tau  \f{1}{\sqrt{\tau-s}} ds\leq e^c e^{-c\tau} \leq e^c  e^{-\min(d,c)\tau}. 
  $$
  \subsection{Proof of \eqref{711}}  Since $\lim_{h\to 0+} \f{e^h-1}{h}=1$, fix  $h_0>0$, so that for all $0<h<h_0$, we have $e^h-1\geq \f{1}{2} h$. We can, without loss of generality take $h_0\leq 1$. 
   
   We split the integration in \eqref{711} in two intervals $s\in (\tau-h_0, \tau)$ and $s\in (0, \tau-h_0)$.  For the latter, we have that $e^{\tau} - e^s\geq e^\tau-e^{\tau-h_0}=e^\tau(1-e^{-h_0})$. So, 
   \begin{eqnarray*}
    	\int_0^{\tau-h_0} e^{b (\tau- s)} e^{- \de (e^{\tau} - e^s)} e^{- c s} ds \leq 	
   e^{-  \de(1-e^{-h_0})  e^{\tau}}  \int_0^{\tau-h_0} e^{b (\tau- s)}   ds \leq   e^{-  \de(1-e^{-h_0})  e^{\tau}} 
   e^{|b|\tau} \tau \leq C_{b, \de} e^{- (c+ 1) \tau},
   \end{eqnarray*}
   where we obtain a much better, exponential in $e^\tau$, decay rate. 
  For the case $s\in (\tau-h_0, \tau)$, observe first that by the choice of $h_0$, we have 
   $$
   e^\tau-e^s=e^s(e^{\tau-s}-1)\geq \f{1}{2} e^s(\tau-s) \geq \f{1}{8} e^{\tau}(\tau-s). 
   $$
  We need to control $e^{- c \tau}   	\int_{\tau-h_0}^\tau   e^{- \f{\de}{8} e^{\tau}(\tau-s)}  ds $, as follows 
  \begin{eqnarray*}
 e^{- c \tau}   	\int_{\tau-h_0}^\tau   e^{- \f{\de}{8} e^{\tau}(\tau-s)}  ds \leq  e^{- c \tau}    \int_0^{1} 
  e^{- \f{\de}{8} e^{\tau}s}  ds  \leq  8 e^{- (c+1) \tau}  \int_0^\infty e^{-\de z} dz = \f{8}{\de} 
   e^{- (c+1) \tau}. 
    \end{eqnarray*}

\end{document}